\documentclass[10pt,a4paper,reqno]{article}

\usepackage[latin,english]{babel}
\usepackage[utf8]{inputenc}
\usepackage[T1]{fontenc}
\usepackage{eufrak, color, mathrsfs,dsfont}
\usepackage{times}
\usepackage{amsmath}
\usepackage{amssymb}
\usepackage{amsfonts}
\usepackage{amsthm}
\usepackage{enumitem}
\usepackage{stmaryrd}
\usepackage{tikz}
\usepackage{mathtools}
\usepackage{mathabx}
\usepackage{cases} 
\usepackage{braket}
\usepackage[toc,page]{appendix}
\usepackage{pdfsync}
\usepackage{hyperref}
\usepackage{psfrag}
\mathtoolsset{showonlyrefs}
\usepackage{math}
\usepackage[english]{babel}
\usepackage{amsmath}
\usepackage{amsthm}
\usepackage{amsfonts,mathrsfs,relsize,bigints,stmaryrd}
\usepackage{amssymb}
\usepackage{anysize}
\usepackage{hyperref}
\usepackage{appendix}
\usepackage{mathtools}
\usepackage{mathabx}
\usepackage{graphicx}
\usepackage{xcolor}
\usepackage{ulem}
\usepackage{dsfont} 
\usepackage{braket}
\usepackage[left=2cm,right=2cm,top=2cm,bottom=2.5cm]{geometry}

\numberwithin{equation}{section}

\newcommand\restr[2]{{
		\left.\kern-\nulldelimiterspace 
		#1 
		\vphantom{\big|} 
		\right|_{#2} 
}}

\allowdisplaybreaks

\theoremstyle{plain}
\newtheorem{lem}{Lemma}
\newtheorem{teo}[lem]{Theorem}
\newtheorem{prop}[lem]{Proposition}

\theoremstyle{definition}

\newtheorem{rmk}[lem]{Remark}

\renewcommand{\epsilon}{\varepsilon}

\renewcommand{\bar}{\overline}
\newcommand{\h}{\hbar}

\newcommand{\pa}{\partial}

\numberwithin{equation}{section}
\counterwithin{lem}{section}
\title{\bf Long-wave instability of periodic shear flows with constant magnetic field for the 2D resistive MHD equations}
\author{
Roberto Feola\footnote{
	Dipartimento di Matematica e Fisica, Universit\'a degli Studi Roma Tre, Largo San Leonardo Murialdo 1, 00146 Roma, Italy. 	\textit{Email:} \texttt{roberto.feola@uniroma3.it}; 
	}, \ \ Luca Franzoi\footnote{
	Dipartimento di Matematica, Universit\'a degli Studi di Roma Tor Vergata, Via della Ricerca Scientifica 1, 00133 Roma, Italy. \textit{Email:} \texttt{franzoi@mat.uniroma2.it}; 
	}, \ \ Riccardo Montalto\footnote{Dipartimento di Matematica ``Federigo Enriques'', Universit\'a degli Studi di Milano, Via Cesare Saldini 50, 20133 Milano, Italy. \textit{Email:} \texttt{riccardo.montalto@unimi.it}}, \ \ 
     Claudia Pe\~na\footnote{Basque Center for Applied Mathematics (BCAM), Alameda de Mazarredo 14, 48009, Bilbao, Spain – Universidad del País Vasco / Euskal Herriko Unibertsitatea (UPV/EHU), Barrio Sarriena s/n, 48940 Leioa, Spain. \textit{Email:} \texttt{cpena@bcamath.org}.
     }
}

\date{}
\begin{document}

\maketitle


\noindent
{\bf Abstract.} 
We investigate the long-wave linear stability and instability of the two-dimensional viscous, resistive Magnetohydrodynamic (MHD) equations, in vorticity-current formulation, on the periodic domain $\T_\alpha\times \T = \Big( \R/(\frac{2 \pi}{\alpha} \Z) \times \R/(2 \pi \Z) \Big)$, around  a periodic shear flow $(U(y),0)$ coupled with a constant background magnetic field $\bb=({\rm b}_1,{\rm b}_2)$. It is a non-trivial extension of a recent paper for the Navier-Stokes equations by Colombo, Dolce, Montalto \& Ventura to the MHD setting in the spirit of the classical works of Kolmogorov, Meshalkin, Sinai and Yudovich.
We establish explicit conditions on the shear flow profile $U(y)$ involving the viscosity $\nu$, the resistivity $\eta$ and the components of the background magnetic field $\bb$ to obtain linear long-wave stability and instability in the regime $\alpha\ll 1$. The proof combines a non-perturbative normal form transformation decoupling the zero Fourier mode from the non-zero modes with sharp asymptotic expansions of the  eigenvalues bifurcating from the zero unperturbed eigenvalue with respect to the parameter $\alpha$. As a dynamical consequence, we obtain a splitting of the phase space into unstable and stable subspaces, on which solutions grow or decay exponentially in Sobolev norm.

\medskip 

\noindent
{\bf Keywords:} Fluid Mechanics, Magnetohydrodynamics, long-wave perturbations, Normal Forms.

\smallskip
\noindent
{\bf MSC 2020:}  35Q35, 76W05, 	76E25, 35P15.

\smallskip 

\tableofcontents

\section{Introduction}

In this paper we consider the two dimensional resistive Magnetohydrodynamic (MHD) equations in vorticity-current formulation on a rectangular torus
\begin{equation}\label{eq:MHD_vort-curr}
	\begin{cases}
			\d_tw + u\cdot\nabla w - b\cdot\nabla\jmath = \nu\Delta w + f  \,,  & t\in \R \\
		\d_t\jmath + u\cdot\nabla \jmath - b\cdot\nabla w = \eta\Delta \jmath + 2H(u,b) \,,  & (\wtx,y) \in \T_{\alpha} \times \T\,, \\
		u=\nabla^\perp\phi\,, \quad   w=\Delta\phi \,, &  \\
		b=\nabla^\perp\psi \,,  \quad \jmath=\Delta\psi \,, & 
	\end{cases}
\end{equation}
where $u=u(t,\wtx,y)$ is the velocity field of the fluid,  $b=b(t,\wtx,y)$ is the magnetic field,   $w = w(t,\wtx,y) = \nabla^\perp \cdot u$ is the scalar vorticity, $\jmath=\jmath(t,\wtx,y)= \nabla^\perp \cdot b$ is the scalar current,
$\nu>0$ is the kinematic viscosity, $\eta > 0$ is the resistivity, $f=f(t,\wtx,y)$ is an external force acting on the fluid,  the nonlinearity is
\begin{equation}
	H(u,b) := \pa_{\wtx}b\cdot\nabla u_2 - \pa_{y}b\cdot\nabla u_1 \,,
\end{equation}
and $\T_\alpha=\R/(\frac{2\pi}{\alpha}\Z) \simeq [0,\frac{2\pi}{\alpha})$, with $\alpha<1$ being the inverse aspect ratio. In these coordinates, we have $\nabla=(\pa_{\wtx},\pa_{y})$, $\nabla^\perp=(-\pa_{y},\pa_{\wtx})$ and $\Delta=\pa_{\wtx}^2 + \pa_y^2$. This system describes the behaviour of a viscous, electrically conducting, incompressible fluid, with non-negligible resistivity, and it is commonly used to model strongly collisional plasmas in astrophysics. 
The first equation in \eqref{eq:MHD_vort-curr} is called the {\it momentum equation} and describes the Navier-Stokes-type evolution of the vorticity $w$, whereas the second one is sometimes called the {\it induction equation}, which is derived from Faraday's law and Ohm's law for a conducting fluid. We refer to \cite{Dav16} for a derivation of the model.

The goal of this work is to understand the stability properties of the equations \eqref{eq:MHD_vort-curr} 
near the steady vector field $(u(y), b)$, where $u(y)$ is the periodic shear flow
\begin{equation}\label{eq:def_uS}
	u(y):=(U(y),0) \,,  \quad \text{with scalar vorticity} \quad  \omega(y):=-U'(y) \,,
\end{equation}
(with $U:\T\to\R$ being sufficiently regular and $\int_{\T} U(y) \wrt y=0$) and $b$ is the constant background magnetic field
\begin{equation}\label{eq:def_bS}
	b:={\bb} := ({\rm b}_{1},{\rm b}_{2}) \in \R^2\,, \quad \text{with trivial scalar current} \quad  \overline{\jmath}=0 \,.
\end{equation}
We remark that $(u(y),\bb)$ is a stationary solution of the system \eqref{eq:MHD_vort-curr} only for a choice of the external force depending on the shear flow and when ${\rm b}_{2}=0$. Indeed, when \eqref{eq:def_uS}, \eqref{eq:def_bS} are inserted in \eqref{eq:MHD_vort-curr}, the first equation is solved by  fixing $f=f(y):= \nu U'''(y) $, whereas in the second equation an error term of the form ${\rm b}_{2} U''(y)$ is produced. For further details, see Remark \ref{rem.forces} later.

\subsection{Overview of the literature}

\paragraph{Well-posedness results.}
The mathematical aspects of the MHD equations have been extensively studied in recent decades, especially with regard to the Cauchy problem. We focus here on the 2D model. In  the resistive and viscous case, the global well-posedness  has been proven in  \cite{DL72} for finite-energy weak solutions, and in \cite{ST83} when the initial datum and the external force are smooth. For the non-resistive case, the local existence of smooth solutions in 2D has been proved in \cite{JN06} when the initial datum and the body force belong to the space $H^s$, with $s\geq 3$. Moreover, 2D global weak solutions   have been constructed in \cite{Koz89} for the system without viscosity, but with resistivity. In the ideal case, i.e. no diffusion in both equations, the local existence of strong solutions for initial data in $H^s(\R^d)$ with $s> d/2+1$, has been established in \cite{Schmidt88,Secchi93}.
Assuming that the initial datum $b_0$ is a small perturbation of a constant steady state, the existence of global-in-time smooth {\it small} solutions (namely, the velocity field is small and the magnetic field is close to a constant steady state) of \eqref{eq:MHD_vort-curr} has been established in \cite{BSS88,CL18,LXZ15,RWXZ14}. However, the existence of global-in-time smooth solutions for \eqref{eq:MHD_vort-curr} with generic periodic initial data (no perturbation of a constant steady state, big velocity) is still open even in 2D. For more references, including the 3D model, we refer to \cite{CMT24}.

\paragraph{Stability and instability of shear flows: hydrodynamics.} 
The study of the dynamical properties of perturbations of laminar solutions, such as shear flows, is a fundamental problem in hydrodynamic stability \cite{DR04}. Classical contributions were made by Kelvin, Rayleigh, Orr, Heisenberg, C.C. Lin, Tollmien, among many others, between the late nineteenth and early twentieth centuries. For the Navier–Stokes equations, fundamental stability issues are closely tied to the boundary conditions (for instance, no-slip boundary conditions for a physical boundary), leading to instability regimes, see e.g. \cite{BG23, GGN16-1,GGN16-2}. On the other hand, fluids with periodic boundary conditions and external forcing are frequently considered in physical applications, such as two-dimensional turbulence theories, geophysical models, and molecular dynamics, see e.g \cite{BE12}. Regarding the stability of shear flows in the long-wave regime, Kolmogorov \cite{AM60} proposed to investigate the behaviour of perturbations of the periodic shear flow $(\sin(y),0)$ varying the parameters $\alpha$ and $\nu$. The stability properties of this \textit{Kolmogorov flow} were first investigated by Meshalkin \& Sinai in \cite{MS61}. Later, Yudovich \cite{Yudo66} formally studied a long-wave instability mechanism for general shear flows, which only recently has been rigorously confirmed by Colombo, Dolce, Montalto \& Ventura in \cite{CDMV25}.

\noindent
In absence of the long-wave regime, other instability mechanisms have been studies for the 2D Euler equations, close to shear flows \cite{Lin03, LV18, LZ22} and to potential steady states \cite{Lin04,LLZ26}.

\noindent
We also mention (in the context of Water Waves equations) the stability problem for the Stokes waves, that is exact nonlinear traveling solutions of the water waves system: at the linear level, such solutions have rigorously been proved  in 2D to be modulational unstable \cite{BCMV24,BMV22,BMV23,BMV24,NS23}, and in 3D to be transversal unstable \cite{CNS25,CNS26} and longitudinal-transversal unstable \cite{BMR26}.

\medskip

\paragraph{Stability and instability of shear flows: magneto-hydrodynamics.}
The problem of the instability of hydromagnetic shear flows, in which the additional influence of a magnetic field is taken into account, exhibits an even richer phenomenology and has also received considerable attention owing, at least in part, to its importance in a number of astrophysical contexts: for instance, see \cite{DR04,Cha13} and the references in \cite{HT01}. 
Since the early work of Rayleigh on shear-flow-driven instabilities in fluids, it has been known that shear flows are usually unstable only in the presence of an inflection point in the velocity profile. However, Tatsuno \& Dorland \cite{TD06} showed with numerical simulations that in ideal magnetohydrodynamics, in particular in tokamak experiments, there are important instabilities for which no inflection point of the shear is required and that the instability mechanism is driven by reversed magnetic shear.
There are also other counterexamples to classical hydrodynamic results for magnetic fluids, such as to Miles-Howard theorem \cite{LZTH10} (a stratified, inviscid parallel shear flow is linearly stable to small perturbations as long as the local gradient Richardson number ${\rm Ri}$ is everywhere greater than $1/4$) and to  Howard's semicircle theorem \cite{HT01} (a geometric limitation for the eigenvalues of the linearized problem). 
In the non-ideal setting, with finite but non-zero viscosity and resistivity, there are also results that manifest the variety of shear flow instabilities: Fraser, Cresswell \& Garaud \cite{FCG22} 
studied the formation of instabilities for the 3D MHD equations around the shear flow $(0,0,\sin(x))$ and the parallel, constant magnetic field $(0,0,1)$, extending the hydrodynamic Kelvin-Helmholtz instability with two extra unstable modes.

\medskip

The stability of the Couette flow is of great relevance in the context of the MHD equations as well. Liss \cite{Lis20} proved the first nonlinear stability threshold result for the viscous and resistive 3D MHD system with small initial data of size $\varepsilon\ll \nu$ in Sobolev spaces, in the regime $\eta=\nu \in (0,1]$. In the 2D setting, Dolce \cite{Dol23} proved an analogous result with small initial data of size $\varepsilon\ll \nu^{\frac23}$ and in the regime $\eta^2 \lesssim {\rm Pr}_{\rm m} \leq 1$, where ${\rm Pr}_{\rm m} := \nu/\eta$ is the magnetic Prandtl number.
The Sobolev stability of 2D MHD equations near a combination of the Couette flow and a parallel, constant magnetic field has been proved by Knobel \& Zillinger \cite{KZ25} (with large magnetic field, full viscosity and only horizontal magnetic dissipation), by Knobel \cite{Kno25} (assuming a viscosity dominant regime $0<\eta\leq\nu$), by Chen \& Zi \cite{CZ25} (with large magnetic field and in the regime $\nu=\eta\in (0,1]$) and by Dolce, Knobel \& Zillinger \cite{DKZ26} (without resistivity). In particular, the linear norm inflation of the linear solutions is shown in \cite{Kno25,KZ25}, whereas in \cite{DKZ26} the authors prove the nonlinear norm inflation of the current over a long time of existence.  Finally, we conclude by mentioning some recent results on the linear kinematic dynamo equation \cite{CzSV25,CzSV26}, where 3D  divergence-free velocity fields (time-independent in $\R^3$, time-periodic in $\T^3$) are constructed in order to generate an unstable dynamics for the magnetic field.

\subsection{Main results}

To state the main result, we first consider perturbations $(\Omega(t,x,y), J(t,x,y))$ of  $(\omega(y),0)$ of the form
\begin{equation}
	w(t,\wtx,y) := \omega(y) + \Omega(t,\alpha \wtx,y) \,, \quad \jmath(t,\wtx,y) := J(t,\alpha \wtx,y) \,,
\end{equation}
so that $(\Omega(t,x,y), J(t,x,y))$ are $2\pi$-periodic with respect to $x\in \T:= \R/\Z$. 
Choosing as the external force
\begin{equation}\label{fix.force}
    f=f(y) = \nu U'''(y) \,,
\end{equation}
the equations satisfied by the variables $(\Omega(t,x,y), J(t,x,y))$ are
\begin{equation}
	\begin{cases}
		\d_t\Omega + \alpha U(y)\pa_x\Omega - \alpha U''(y)\pa_x\Delta_{\alpha}^{-1}\Omega- {\bb}\cdot\nabla_\alpha J + \cN_\Omega(\Omega,J) = \nu\Delta_\alpha\Omega \,, \\
		\d_tJ + \alpha U(y)\pa_xJ - {\bb}\cdot\nabla_\alpha\Omega + \alpha U''(y)\pa_x\Delta_{\alpha}^{-1}J + \cN_J(\Omega,J) = \eta \Delta_\alpha J  - {\rm b}_2U''(y) \,, \\
		u_\Omega=\nabla_{\alpha}^\perp\phi_\Omega\,, \quad   \Omega=\Delta_\alpha\phi_\Omega\,, \\
		b_J=\nabla_{\alpha}^\perp\psi_J \,,  \quad J=\Delta_\alpha\psi_J \,,
	\end{cases}
\end{equation}
for $t\in\R$ and $(x,y)\in\T^2$, where we have denoted by
\begin{align}
 \cN_\Omega(\Omega,J):=u_\Omega\cdot\nabla_{\alpha}\Omega-b_J\cdot\nabla_{\alpha}J   \,, \quad \cN_J(\Omega,J):=u_\Omega\cdot\nabla_{\alpha}J-b_J\cdot\nabla_{\alpha}\Omega-2H_\alpha(u_\Omega,b_\Omega) \,, 
\end{align}
with $\nabla_{\alpha}:= (\alpha \pa_x , \pa_y)$ and $\Delta_{\alpha}:= \alpha^2 \pa_{x}^2 + \pa_{y}^2$ for $(x,y)\in\T^2$ after the change of variables. The system can be also written in matrix form as 
$\pa_t(\Omega,J)=F(\Omega,J)$, where
\begin{equation}\label{eq:def_F}
	F(\Omega,J):=\begin{pmatrix}
	-\alpha U\pa_x\Omega + \alpha U''\pa_x\Delta_\alpha^{-1}\Omega + {\bb}\cdot\nabla_{\alpha}J + \nu\Delta_{\alpha}\Omega  \\
	-\alpha U\pa_xJ + {\bb}\cdot\nabla_\alpha\Omega - \alpha U''\pa_x\Delta_{\alpha}^{-1}J  + \eta\Delta_\alpha J 
	\end{pmatrix} - \begin{pmatrix}
	    \cN_\Omega(\Omega,J) \\ \cN_J(\Omega,J)
	\end{pmatrix}  + \begin{pmatrix}
	    0 \\  - {\rm b}_2U''(y)
	\end{pmatrix}.
\end{equation}
We consider the linearized operator corresponding to the linearization of \eqref{eq:MHD_vort-curr} around the stationary state $\big( (U(y),0), \bb \big)$  in \eqref{eq:def_uS}-\eqref{eq:def_bS} (or equivalently, by neglecting the nonlinear terms and  the error term in \eqref{eq:def_F}), which is given by 
\begin{equation}\label{cL.numu}
	\cL_{\nu,\eta} := \begin{pmatrix}
		\nu \Delta_{\alpha} - \alpha\Big( U(y) + U''(y) (-\Delta_{\alpha})^{-1}  \Big)\pa_{x}   & {\bb}\cdot \nabla_{\alpha} \\
		{\bb}\cdot \nabla_{\alpha} & \eta\Delta_{\alpha} - \alpha\Big( U(y) - \big(U''(y) +2U'(y)\pa_{y} \big)(-\Delta_{\alpha})^{-1}   \Big)\pa_{x}  
	\end{pmatrix}
\end{equation}
We observe that the frequencies with respect to $x$ are decoupled in $\cL_{\nu,\eta}$. That is, on the subspace of the plane waves $\{ f(y)e^{\im kx} \}$, with $k\in\Z\setminus\{0\}$, the action of $\cL_{\nu,\eta}$ in \eqref{cL.numu} becomes
\begin{equation}
    \cL_{\nu,\eta}[f(y)e^{\im k x}] = \big( \cL_{\nu,\eta,\varepsilon}[f(y)] \big) e^{\im kx} \,,
\end{equation}
where the linear operator $\cL_{\nu,\eta,\varepsilon}$ is given by
\begin{equation}\label{cL.vare}
\scalebox{0.9}{$
    \cL_{\nu,\eta,\varepsilon} := \begin{pmatrix}
		\nu (\pa_{y}^2-\varepsilon^2)- \im\varepsilon\big( U(y) + U''(y) (\varepsilon^2-\pa_{y}^2)^{-1}  \big)   & \im\varepsilon{\rm b}_1 + {\rm b}_2\pa_{y} \\ 
		\im\varepsilon{\rm b}_1+ {\rm b}_2\pa_{y} & \eta(\pa_{y}^2 -\varepsilon^2)- \im\varepsilon \big(U(y) - (U''(y) + 2U'(y)\pa_{y}) (\varepsilon^2-\pa_{y}^2)^{-1} \big)
	\end{pmatrix},
$}
\end{equation}
where we have denoted
\begin{equation}\label{vare.def}
    \varepsilon := \alpha k  \,.
\end{equation}
Without loss of generality, we may assume $k>0$. Indeed, it is not hard to check that for any
eigenvalue $\lambda$ of $\cL_{\nu,\eta,\alpha k}$ with $k>0$, then $\overline{\lambda}$ is an eigenvalue of $\cL_{\nu,\eta,-\alpha k}$.
\\
For $s\geq 0$, we define the Sobolev space
\begin{equation}\label{spaziFunz.intro}
\begin{aligned}
   H^s :=  H^s(\T) &:= \Big\{ u(y)=\sum_{j\in\Z} \whu(j) e^{\im j y} \in L^2(\T) \, : \, \| u \|_{s}^2 := \sum_{j\in\Z}\la j\ra^{2s} |\whu (j)|^2 < \infty \Big\} \,,  \\
   H_0^s:= H_0^s(\T) &:= \Big\{ u\in H^s(\T)  \, : \,\int_{\T} u(y)\wrt y = 0 \Big\}\,, 
\end{aligned}
\end{equation}
and, for $u = (u_1, u_2) \in H^s \times H^s$, we write $\| u \|_s := \| u_1 \|_s + \| u_2 \|_s$. For any $u\in H_0^s(\T) $ and for any $k \in \N$, we also define $\pa_{y}^{-k} u(y) := \sum_{j\in \Z\setminus\{0\}} (\im j)^{-k} \whu(j) e^{\im j y}$. Let also $ {\rm Id}_{\perp} := {\rm Id}_{H_0^s}$.
\\[1mm]
\noindent
The first main result of this paper shows the presences of unstable eigenvalues for the operator $\cL_{\nu,\eta,\varepsilon}$ in \eqref{cL.vare}
\begin{teo}\label{theo.princ}
{\bf (Spectral long-wave instability/stability).}
    Let $\nu,\eta>0$ and consider $u(y)=\big( \begin{smallmatrix}
        U(y) \\ 0
    \end{smallmatrix} \big)$ and $b=\big(\begin{smallmatrix}
        {\rm b}_1\\ {\rm b}_2
    \end{smallmatrix}\big)$, assuming that $U \in C^{S+2}(\T,\R)$ for 
    some integer $S\geq 1$. Denote by $\omega(y)=-U'(y)$ and $\overline{\jmath}=0$ 
    the corresponding vorticity and current fields.
    Let $\cL_{\nu,\eta}$ in \eqref{cL.numu} be the linearized operator  at $(\omega,\overline{\jmath})$ and let $\cL_{\nu,\eta,\varepsilon}$ in \eqref{cL.vare},  $\cL_{\nu,\eta,\varepsilon}:H^{s + 2}(\T) \times H^{s + 2}(\T)\to H^s(\T) \times H^s(\T)$, $0 \leq s \leq S$ be its restriction to the subspace of the plane waves $\{ f(y)e^{\im kx} \}$ for some fixed $k\in\Z\setminus\{0\}$. Let $\varepsilon$ as in \eqref{vare.def}.
    There exists $\varepsilon_0 \equiv \varepsilon_{0}(S,\nu,\eta,\bb) \in (0,1)$ small enough such that, for any $\varepsilon \in (0,\varepsilon_0)$, the spectrum $\sigma(\cL_{\nu,\eta,\varepsilon})$ of $\cL_{\nu,\eta,\varepsilon}$ is discrete and the following hold:
    \begin{itemize}
        \item[(i)] When ${\rm b}_1\neq0$, if 
        \begin{equation}\label{cond shear instab 1}
            \eta > \nu \quad \textit{ and } \quad \la \big( {\rm Id}_\perp - \tfrac{{\rm b}_2^2}{\nu\eta} \pa_{y}^{-2} \big)^{-1}\pa_y^{-1}U,\pa_y^{-1}U\ra_{L^2}>\frac{(\nu+\eta)\nu\eta}{\eta-\nu} \,,
        \end{equation}
         there exist two simple eigenvalues $\mu_{-},\mu_{+} \in \C\setminus\{0\}$ such that ${\rm Re}(\mu_{\pm})>0$, whereas the remaining eigenvalues in $\sigma(\cL_{\nu,\eta,\varepsilon})$ have negative real part;
    \item [(ii)] When ${\rm b}_1=0$, if 
    \begin{equation}\label{cond shear instab 2}
        \la \big( {\rm Id}_\perp - \tfrac{{\rm b}_2^2}{\nu\eta} \pa_{y}^{-2} \big)^{-1}\pa_y^{-1}U,\pa_y^{-1}U\ra_{L^2}>\nu^2 \,,
    \end{equation}
    there exists exactly one simple eigenvalue $\mu_{+} \in \C\setminus\{0\}$  such that ${\rm Re}(\mu_{+})>0$,  whereas the remaining eigenvalues in $\sigma(\cL_{\nu,\eta,\varepsilon})$ have negative real part.
     \item[(iii)] When ${\rm b}_1\neq0$, if $\nu \geq \eta$ all the eigenvalues in $\sigma(\cL_{\nu,\eta,\varepsilon})$ have negative real part. 
    \end{itemize}
\end{teo}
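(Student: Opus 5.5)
The plan is to treat $\cL_{\nu,\eta,\varepsilon}$ as an analytic perturbation in $\varepsilon$ of the operator at $\varepsilon=0$,
\[
\cL_0=\begin{pmatrix}\nu\pa_y^2 & {\rm b}_2\pa_y\\ {\rm b}_2\pa_y & \eta\pa_y^2\end{pmatrix},
\]
which is a Fourier multiplier in $y$. On the mode $e^{\im jy}$ it acts as the $2\times2$ matrix $\begin{pmatrix}-\nu j^2 & \im {\rm b}_2 j\\ \im {\rm b}_2 j & -\eta j^2\end{pmatrix}$. For $j\ne0$ its trace is $-(\nu+\eta)j^2<0$ and its determinant is $\nu\eta j^4+{\rm b}_2^2j^2>0$, so both eigenvalues have real part $\le -c\min(\nu,\eta)j^2$. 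For $j=0$ it vanishes.

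Hence $\sigma(\cL_0)=\{0\}\cup\sigma_\perp$. The eigenvalue $0$ has a two-dimensional kernel spanned by the constants $(1,0),(0,1)$. The set $\sigma_\perp$ lies in $\{{\rm Re}\,\lambda\le -c_0\}$ with $c_0>0$. Discreteness of $\sigma(\cL_{\nu,\eta,\varepsilon})$ follows because the $\varepsilon$-dependent terms are relatively bounded of order $\varepsilon$ with respect to the diagonal parabolic part. That part has compact resolvent on $H^s\times H^s$, so $\cL_{\nu,\eta,\varepsilon}$ has compact resolvent as well. Standard resolvent estimates then show that, for $\varepsilon<\varepsilon_0$, the spectrum splits into a part in ${\rm Re}\,\lambda\le -c_0/2$ and exactly two eigenvalues (counted with multiplicity) in a disc of radius $O(\varepsilon)$ around $0$. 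Everything then reduces to locating those two eigenvalues precisely.

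To locate them, I would perform a (non-perturbative) block decoupling of the zero Fourier mode from $H_0^s\times H_0^s$. Concretely, I look for a transformation $\Phi_\varepsilon={\rm Id}+O(\varepsilon)$ that conjugates $\cL_{\nu,\eta,\varepsilon}$ to a block-diagonal operator ${\rm diag}(\mathtt{M}(\varepsilon),\cL_\perp(\varepsilon))$. Here $\mathtt M(\varepsilon)$ is a $2\times2$ matrix acting on the constants. It can be obtained either by solving a Riccati-type equation for the graph of an invariant subspace, with invertibility of $\cL_0|_{\perp}$ giving a contraction, or equivalently via a Schur complement. I then expand $\mathtt M(\varepsilon)$ to order $\varepsilon^2$.

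The first-order term comes from projecting the $O(\varepsilon)$ part onto constants. Since $\int U=0$, all terms involving $U,U',U''$ give no contribution at order $\varepsilon$ (the inverse-Laplacian terms acting on constants must be interpreted with $(\varepsilon^2-\pa_y^2)^{-1}$, but these are multiplied by $\varepsilon$ and by zero-mean functions). What remains is $\mathtt M_1=\im{\rm b}_1\begin{pmatrix}0&1\\1&0\end{pmatrix}$.

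The second-order term has two pieces. The first is the direct diagonal piece $-\varepsilon^2{\rm diag}(\nu,\eta)$. The second is the coupling term
\[
-\varepsilon^2\,\Pi_0\,V\,(\cL_0|_\perp)^{-1}\,V\,\Pi_0,
\]
where $V$ is the $\im U$-multiplication part. $V$ sends $(1,0)$ and $(0,1)$ to $(-\im U,0)$ and $(0,-\im U)$. Inverting the $2\times 2$ block symbol of $\cL_0|_\perp$ mode by mode, this term produces quadratic forms of the type
\[
\big\langle (\nu\eta\,{\rm Id}_\perp-{\rm b}_2^2\pa_y^{-2})^{-1}\pa_y^{-1}U,\ \pa_y^{-1}U\big\rangle,
\]
which is exactly where the operator $\big({\rm Id}_\perp-\tfrac{{\rm b}_2^2}{\nu\eta}\pa_y^{-2}\big)^{-1}$ in \eqref{cond shear instab 1}–\eqref{cond shear instab 2} comes from. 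Set
\[
Q:=\big\langle ({\rm Id}_\perp-\tfrac{{\rm b}_2^2}{\nu\eta}\pa_y^{-2})^{-1}\pa_y^{-1}U,\pa_y^{-1}U\big\rangle .
\]
I expect the diagonal entries of $\mathtt M_2$ to be of the form $-\nu+\tfrac{\eta}{\nu\eta}\cdot\tfrac{Q}{\nu}$-like combinations and $-\eta-(\dots)Q$, with the sign asymmetry coming from the $+U''$ versus $-(U''+2U'\pa_y)$ terms. Getting these constants exactly right is the computational heart of the proof and the step I expect to be the main obstacle. It requires handling the $(\varepsilon^2-\pa_y^2)^{-1}$ terms on the zero mode carefully (they are singular as $\varepsilon\to0$ on constants, so the $\varepsilon\cdot\varepsilon^{-2}$ bookkeeping must be done exactly). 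It also requires proving an $O(\varepsilon^3)$ remainder bound using only $U\in C^{S+2}$.

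Finally, I read off the spectrum of $\mathtt M(\varepsilon)=\varepsilon\mathtt M_1+\varepsilon^2\mathtt M_2+O(\varepsilon^3)$.

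In case (i), ${\rm b}_1\ne0$, so $\mathtt M_1$ has distinct eigenvalues $\pm\im{\rm b}_1$. These are simple and purely imaginary, so the eigenvalues are $\mu_\pm=\pm\im{\rm b}_1\varepsilon+\varepsilon^2 m_\pm+O(\varepsilon^3)$. Here $m_\pm=\langle \mathtt M_2 e_\pm,e_\pm\rangle$ with $e_\pm=(1,\pm1)/\sqrt2$, so ${\rm Re}\,m_\pm$ is essentially the average of the two diagonal entries of $\mathtt M_2$. Positivity of this average should rearrange to $Q(\eta-\nu)>(\nu+\eta)\nu\eta$, which is \eqref{cond shear instab 1}. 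The same formula shows it is never positive when $\nu\ge\eta$, which gives (iii).

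In case (ii), ${\rm b}_1=0$, so $\mathtt M_1=0$. The eigenvalues are then $\varepsilon^2$ times the eigenvalues of $\mathtt M_2$, which I expect to be (up to the off-diagonal terms, to be checked) triangular or diagonal. One eigenvalue, $-\nu+Q/\nu$, is positive exactly under \eqref{cond shear instab 2}, while the other stays negative. Simplicity follows in both cases because the eigenvalues are separated at leading order. The remaining spectrum stays in ${\rm Re}<-c_0/2$, which completes the plan.
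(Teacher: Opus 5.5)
\textbf{There is a genuine gap.} Your plan rests on the claim that the $\varepsilon$-dependent part of $\mathcal{L}_{\nu,\eta,\varepsilon}$ is an $O(\varepsilon)$, relatively bounded perturbation of $\mathcal{L}_0$. This is false.

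On constants, $(\varepsilon^2-\partial_y^2)^{-1}1=\varepsilon^{-2}$. So the terms $\mp\mathrm{i}\varepsilon U''(\varepsilon^2-\partial_y^2)^{-1}$ produce $\mp\frac{\mathrm{i}}{\varepsilon}U''\pi_0$, and the lower block is
$$\Pi_0^\perp\mathcal{L}_{\nu,\eta,\varepsilon}\Pi_0=\varepsilon^{-1}\mathrm{diag}(-\mathrm{i}U'',\mathrm{i}U'')+O(\varepsilon),$$
which blows up as $\varepsilon\to0$. In the other block $\Pi_0\mathcal{L}_{\nu,\eta,\varepsilon}\Pi_0^\perp$, the order-$\varepsilon$ contributions of $Uh$ and $U''(\varepsilon^2-\partial_y^2)^{-1}h$ cancel exactly. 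This uses $(\varepsilon^2-\partial_y^2)^{-1}U''=-U+\varepsilon^2(\varepsilon^2-\partial_y^2)^{-1}U$. What is left is $-\mathrm{i}\varepsilon^3\big(\pi_0[h_1(\varepsilon^2-\partial_y^2)^{-1}U],\pi_0[h_2(\varepsilon^2-\partial_y^2)^{-1}U]\big)$.

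Three consequences follow.
\begin{itemize}
\item A Kato-type splitting on a circle $|\lambda|\sim\varepsilon$ fails. On constants the singular rank-two part gives $\|\varepsilon^{-1}\mathcal{S}_0(\lambda-\mathcal{L}_0)^{-1}\|\sim\varepsilon^{-1}|\lambda|^{-1}$, which is not small there.
\item No decoupling map $\Phi_\varepsilon=\mathrm{Id}+O(\varepsilon)$ can exist. The spectral subspace of the two small eigenvalues is the graph $\{(v_0,-T_3v_0)\}$, with $T_3=\varepsilon^{-1}\mathtt{D}_{\perp\perp}^{-1}\mathtt{L}^{(-1)}_{\perp0}+O(1)$. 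Its eigenfunctions are therefore dominated by their zero-mean part.
\item Discreteness still holds, because the singular part has finite rank for fixed $\varepsilon$. Your localization and counting argument does not.
\end{itemize}

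More seriously, this error reaches the one step that decides the theorem. Your coupling term $-\varepsilon^2\Pi_0V(\mathcal{L}_0|_\perp)^{-1}V\Pi_0$ with $V=-\mathrm{i}U\,\mathrm{Id}$ evaluates to $\varepsilon^2\,\mathrm{diag}(-Q/\nu,-Q/\eta)$. The cross terms vanish because $\int_{\mathbb{T}}U\,(\mathrm{Id}_\perp-\tfrac{{\rm b}_2^2}{\nu\eta}\partial_y^{-2})^{-1}\partial_y^{-3}U\,dy=0$. Your $\mathtt{M}_2$ would then be $\mathrm{diag}(-\nu-Q/\nu,\,-\eta-Q/\eta)$, and you would find no instability at all, contradicting items (i) and (ii). The entries you say you "expect" are read off from the statement, not derived.

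The correct second-order coupling is $-\mathtt{L}_{0\perp}\mathtt{D}_{\perp\perp}^{-1}\mathtt{L}_{\perp0}$, computed with the exact blocks above. The $O(\varepsilon^3)$ row times $\mathtt{D}_{\perp\perp}^{-1}$ times the $O(\varepsilon^{-1})$ column $\mp\frac{\mathrm{i}}{\varepsilon}U''$ gives $\varepsilon^2\,\mathrm{diag}(Q/\nu,-Q/\eta)$. The sign asymmetry comes from the opposite signs of $U''$ in the two rows of $\mathtt{L}^{(-1)}_{\perp0}$, not from the $U$-multiplication part.

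There are three ways to repair the argument:
\begin{itemize}
\item Follow the paper: build a non-perturbative normal form with $T_3=O(\varepsilon^{-1})$ and $T_2=O(\varepsilon^3)$, each found by a fixed-point argument.
\item Use the exact Schur complement $\mathtt{L}_{00}-\lambda-\mathtt{L}_{0\perp}(\mathtt{L}_{\perp\perp}-\lambda)^{-1}\mathtt{L}_{\perp0}$ together with a Rouch\'e-type count of its zeros.
\item First conjugate by $\mathrm{diag}(\varepsilon^{-2}\mathrm{Id}_0,\mathrm{Id}_\perp)$. This makes both off-diagonal blocks $O(\varepsilon)$ and restores a genuinely perturbative setting in which your Riesz-projection plan works. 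The resulting second-order coupling is then automatically the correct one.
\end{itemize}
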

\begin{rmk}
We make some remarks on the conditions \eqref{cond shear instab 1}, \eqref{cond shear instab 2}. First of all, if the two inequalities hold with opposite sign, then in the cases $(i)$ and $(ii)$ of the latter theorem one has that the all the eigenvalues have negative real parts. We omit the analysis of these cases since it is similar to the one of item $(iii)$. 

\noindent
We also note that the condition \eqref{cond shear instab 1} is satisfied if
$$
\| \partial_y^{- 1} U \|_{L^2}^2 > C_0(\eta, \nu, {\rm b}_2)
$$
for some constant $C_0(\eta, \nu, {\rm b}_2) > 0$. Indeed, by setting 
$$
\gamma (\eta, \nu, {\rm b}_2) := \inf_{j \in \Z \setminus \{ 0 \}} \dfrac{1}{1 + \frac{{\rm b}_2^2}{\nu \eta j^2}} > 0 \,,
$$
one has 
$$
\la \big( {\rm Id}_\perp - \tfrac{{\rm b}_2^2}{\nu\eta} \pa_{y}^{-2} \big)^{-1}\pa_y^{-1}U,\pa_y^{-1}U\ra_{L^2} \geq \gamma (\eta, \nu, {\rm b}_2) \| \partial_y^{- 1} U \|_{L^2}^2\,.
$$
Hence, \eqref{cond shear instab 1} is satisfied provided that 
$$
\| \partial_y^{- 1} U \|_{L^2}^2 > \frac{(\nu+\eta)\nu\eta}{\gamma (\eta, \nu, {\rm b}_2)(\eta-\nu)} =: C_0(\eta, \nu, {\rm b}_2)\,.
$$
Similar arguments hold for the condition \eqref{cond shear instab 2}. 

     \end{rmk}
In the second main result, we show the existence of initial data such that their evolution under the flow generated by $\cL_{\nu,\eta,\varepsilon}$ exhibits for positive times either unbounded growth or dissipation in Sobolev topology.
\begin{teo}
    {\bf (Dynamical behaviour).}\label{thm:dinamico}
    Under the assumptions of Theorem \ref{theo.princ}, we consider the Cauchy problem
    \begin{equation}\label{sistemaOrig}
        \begin{cases}
            \pa_{t} u(t) = \cL_{\nu,\eta,\varepsilon} u(t) \,, \\
            u(0) = \vf \in H^s \times H^s \,,
        \end{cases}
    \end{equation}
    with $0\leq s\leq S$. The following hold:
    \begin{itemize}
\item[(i)] Under the assumptions of the items (i) of Theorem \ref{theo.princ}, we have the splitting of the phase space
\begin{equation}\label{stable.unstable.dec}
    H^s(\T) \times H^s(\T) = \cU_{\varepsilon} \oplus \cS_{\varepsilon} \,
\end{equation}
where ${\mathcal S}_\e \subset H^s(\T)$ is an infinite dimensional subspace and ${\mathcal U}_\e \subset H^s(\T)$ is a two-dimensional subspace such that:
\begin{itemize}
    \item for any $\varphi\in \cU_\varepsilon$ the corresponding solution $u(t)$ of \eqref{sistemaOrig} satisfies the estimate
\begin{equation}\label{unstable.est.thm}
    \|u(t)\|_{s} \gtrsim e^{\mu \e^2 t} \| \vf \|_s  \,, 
\quad \forall \, t \geq 0 \,,
\end{equation}
for some constant $\mu >0$;
\item for any $\varphi\in \cS_\varepsilon$ the corresponding solution $u(t)$ of \eqref{sistemaOrig} satisfies the estimate
\begin{equation}\label{stable.est.thm}
    \|u(t)\|_{s}\lesssim e^{-\mu_0 t}\|\varphi\|_{s}\,,
\quad \forall\, t\geq0\,,
\end{equation}
for some constant $\mu_0 >0$; 
\end{itemize}
\item[(ii)] Under the assumptions of the item (ii) of Theorem \ref{theo.princ}, we have the splitting of the phase space
\begin{equation}\label{stable.unstable.deccaso2}
    H^s(\T) \times H^s(\T) = \cU_{\varepsilon} \oplus \cS_{\varepsilon} \,
\end{equation}
where  ${\mathcal S}_\e \subset H^s(\T)$ is an infinite dimensional subspace and ${\mathcal U}_\e \subset H^s(\T)$ is a one dimensional subspace such that: 
\begin{itemize}
    \item for any $\varphi\in \cU_\varepsilon$ the corresponding solution $u(t)$ of \eqref{sistemaOrig} satisfies the estimate
\begin{equation}\label{unstable.est.thmcaso2}
    \|u(t)\|_{s} \gtrsim e^{\mu_1 \e^2 t} \| \vf \|_s \,, 
\quad \forall \, t > 0 \,,
\end{equation}
for some constant $\mu_1  >0$;
\item for any $\varphi\in \cS_\varepsilon$ the corresponding solution $u(t)$ of \eqref{sistemaOrig} satisfies the estimate
\begin{equation}\label{stable.est.thmcaso2}
    \|u(t)\|_{s}\lesssim \e^{-1} e^{- \mu_2 \e^2 t}\|\varphi\|_{s}\,,
\quad \forall\, t \geq 0\,,
\end{equation}
for some constant  $\mu_2 >0$; 
\end{itemize}
\item[(iii)] Under the assumptions of the item (iii) of Theorem \ref{theo.princ}, one has that there exists a constant $\mu_3 > 0$ such that for any $\vf \in H^s \times H^s $, 
\begin{equation}\label{stima b1 neq 0 caso stabile}
\| u(t) \|_{s} \lesssim \e^{-1} e^{- \mu_3 \e^2 t} \| \vf \|_{s}, \quad \forall \, t \geq 0\,,
\end{equation}
for some constant $\mu_3>0$.
\end{itemize}
\end{teo}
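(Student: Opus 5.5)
The plan is to derive Theorem \ref{thm:dinamico} from the spectral information of Theorem \ref{theo.princ}, and more precisely from the block decomposition produced in its proof. That proof applies a non-perturbative normal form transformation $\Phi_\varepsilon$, bounded and invertible on $H^s\times H^s$ uniformly in $\varepsilon$, that decouples the zero Fourier mode in $y$ from the nonzero modes. We conjugate $\cL_{\nu,\eta,\varepsilon}$ to a block-diagonal operator $\Phi_\varepsilon^{-1}\cL_{\nu,\eta,\varepsilon}\Phi_\varepsilon = \mathrm{diag}(\mathtt{L}_0,\mathtt{L}_\perp)$. Here $\mathtt{L}_0$ is a $2\times2$ matrix acting on the zero modes $(\widehat\Omega(0),\widehat J(0))$. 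Its eigenvalues are the ones bifurcating from $0$, namely $\mu_\pm = \varepsilon^2 m_\pm + O(\varepsilon^3)$, where the $m_\pm$ are given by the sharp expansions. $\mathtt{L}_\perp$ acts on $H^s_0\times H^s_0$ and is a small perturbation of $\mathrm{diag}(\nu\pa_y^2,\eta\pa_y^2)+\mathrm{b}_2\pa_y\sigma_1$. Since $\mathrm{b}_2\pa_y$ is skew, this unperturbed operator is dissipative with gap $\min(\nu,\eta)$ on mean-zero functions.

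The first step is a semigroup estimate on the nonzero modes. We show $\|e^{t\mathtt{L}_\perp}\|_{\mathcal{L}(H^s_0)}\lesssim e^{-\mu_0 t}$ by an energy estimate in the $H^s$ norm. Pairing with $\la D\ra^{2s}u$, the viscous part yields $-\min(\nu,\eta)\|u\|_{s+1}^2$, the $\mathrm{b}_2$ coupling contributes zero, and the remaining terms are $O(\varepsilon)$ operators of order at most one. These are absorbed for $\varepsilon<\varepsilon_0$, using Poincaré on $H^s_0$. The regularity $U\in C^{S+2}$ enters here to bound commutators with $\la D\ra^s$. This gives \eqref{stable.est.thm} on $\Phi_\varepsilon(\{0\}\times H_0^s)$ with a rate independent of $\varepsilon$.

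The second step treats the $2\times2$ block. In case (i), $\mu_+\ne\mu_-$ both have positive real part of size $\varepsilon^2$ and are simple. Then $\mathtt{L}_0$ is diagonalizable, and on $\cU_\varepsilon:=\Phi_\varepsilon(\C^2\times\{0\})$ we get $|e^{t\mathtt{L}_0}x|\ge e^{\min\mathrm{Re}\mu_\pm t}\,c\,|x|$. The constant $c$ is the conditioning of the eigenbasis and should be controlled uniformly in $\varepsilon$. Composing with the bounded $\Phi_\varepsilon^{\pm1}$ gives \eqref{unstable.est.thm}, and $\cS_\varepsilon:=\Phi_\varepsilon(\{0\}\times H^s_0)$ gives \eqref{stable.unstable.dec}. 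In case (ii) one eigenvalue, $\mu_+$, is unstable, and $\cU_\varepsilon$ is its eigenline. The stable part is the other eigenline, spanned by the eigenvector of $\mu_-$ with $\mathrm{Re}\,\mu_-\le-\mu_2\varepsilon^2$, together with the nonzero-mode block. In case (iii) both zero-mode eigenvalues satisfy $\mathrm{Re}\,\mu_\pm\le -\mu_3\varepsilon^2$, and the full estimate follows by combining the blocks.

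The main obstacle is the $\varepsilon^{-1}$ loss in \eqref{stable.est.thmcaso2} and \eqref{stima b1 neq 0 caso stabile}, which come from the conditioning of $\mathtt{L}_0$. When $\mathrm{b}_1\ne0$ (or in the degenerate situations), the two eigenvalues $\mu_\pm$ may be only $O(\varepsilon^2)$-separated, or even nearly coincide, relative to off-diagonal entries of size $O(\varepsilon)$, such as the $\im\varepsilon\mathrm{b}_1$ coupling. The eigenvector matrix $P_\varepsilon$ then has $|P_\varepsilon||P_\varepsilon^{-1}|\lesssim\varepsilon^{-1}$. I would first try to write $\mathtt{L}_0=\varepsilon^2 A+\im\varepsilon\mathrm{b}_1\sigma_1+O(\varepsilon^3)$ and compute $e^{t\mathtt{L}_0}$ explicitly. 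If the eigenvalues are close, I would use a Jordan-type bound $|e^{t\mathtt{L}_0}|\le (1+t|N|)e^{t\max\mathrm{Re}\mu}$ with $|N|\lesssim \varepsilon$, and absorb $t\varepsilon\, e^{-\delta\varepsilon^2 t}\lesssim \varepsilon^{-1}$ by slightly lowering the rate. In case (i), however, I would check that the explicit $m_\pm$ are distinct at leading order. That keeps the conditioning bounded, so no loss appears in \eqref{unstable.est.thm}.
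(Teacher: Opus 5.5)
\textbf{There is a genuine gap.} Your premise that the decoupling map $\Phi_\varepsilon$ is bounded with bounded inverse \emph{uniformly in} $\varepsilon$ is false, and your diagnosis of the $\varepsilon^{-1}$ loss fails with it.

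In the paper the conjugation $\mathtt T$ has off-diagonal blocks $T_2=O(\varepsilon^3)$ and $T_3=\varepsilon^{-1}T_3^{(-1)}+O(1)$, where $T_3^{(-1)}=\mathtt D_{\perp\perp}^{-1}\mathtt L^{(-1)}_{\perp0}$. For real $U\not\equiv0$ this $T_3^{(-1)}$ is injective on $\mathbb C^2$, because $U$ and $\partial_y^{-1}U$ are linearly independent. The large size of $T_3$ is forced by the $\varepsilon^{-1}U''$ coupling of the zero mode into the nonzero modes, and no cleverer choice of map avoids it. The eigenvectors of $\mathcal L_{\nu,\eta,\varepsilon}$ for $\mu_\pm$ are $(x,-T_3x)$, whose nonzero-mode part has size $\sim\varepsilon^{-1}|x|$. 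Hence the spectral projection onto them sends $(x,0)$ to a vector of norm $\sim\varepsilon^{-1}|x|$. For \emph{any} block-diagonalizing $\Phi_\varepsilon$ this projection equals $\Phi_\varepsilon\,\mathrm{diag}(\mathrm{Id},0)\,\Phi_\varepsilon^{-1}$, so necessarily $\|\Phi_\varepsilon\|\,\|\Phi_\varepsilon^{-1}\|\gtrsim\varepsilon^{-1}$. Every step of the form ``compose with the bounded $\Phi_\varepsilon^{\pm1}$'' therefore fails as written.

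\textbf{The $2\times2$ block is well conditioned in every case,} so your Jordan-type bound addresses a problem that does not exist.
\begin{itemize}
\item For $\mathrm b_1\ne0$ the discriminant is $-\varepsilon^2\mathrm b_1^2+O(\varepsilon^4)$. Hence $\mu_\pm=\pm\mathrm i\varepsilon|\mathrm b_1|+O(\varepsilon^2)$, which is not of the form $\varepsilon^2m_\pm+O(\varepsilon^3)$. The eigenvalues are separated at the same order $\varepsilon$ as the off-diagonal entries, and the paper's $P$ has $\|P\|\lesssim\varepsilon$, $\|P^{-1}\|\lesssim\varepsilon^{-1}$, i.e.\ condition number $O(1)$.
\item In case (ii) the block is $\varepsilon^2A$, where $A$ has eigenvalues near $\mathtt c_1$ and $-\mathtt c_2$ and $O(\varepsilon)$ off-diagonal entries.
\item Cases (i) and (iii) have the same $2\times2$ structure, so claiming a conditioning loss in (iii) but not in (i) is inconsistent.
\end{itemize}
A uniformly bounded $\Phi_\varepsilon$ combined with this well-conditioned block would give the stable bounds in (ii) and (iii) \emph{without} the factor $\varepsilon^{-1}$, and that is false. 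In case (iii), the datum $(x,0)$ develops, for $1\ll t\ll\varepsilon^{-2}$, a nonzero-mode component $\approx-T_3v_0(t)$ of size $\sim\varepsilon^{-1}|x|$.

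\textbf{What is missing} is to track components through the triangular structure of $\mathtt T$ and $\mathtt T^{-1}$:
\[
|\phi_0|\lesssim|\varphi_0|+\varepsilon^3\|\varphi_\perp\|_s,\quad \|\phi_\perp\|_s\lesssim\varepsilon^{-1}|\varphi_0|+\|\varphi_\perp\|_s,\quad |u_0|\lesssim|v_0|+\varepsilon^3\|v_\perp\|_s,\quad \|u_\perp\|_s\lesssim\|v_\perp\|_s+\varepsilon^{-1}|v_0|.
\]
Since $T_2T_3$ and $T_3T_2$ are $O(\varepsilon^2)$, the factor $\varepsilon^{-1}$ enters at most once, never as $\varepsilon^{-2}$. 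On $\mathcal S_\varepsilon$ in case (i) one has $v_0\equiv0$, so there is no loss at all.

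For the lower bound on $\mathcal U_\varepsilon$, note that $\|\varphi\|_s\sim\varepsilon^{-1}|\varphi_0|$ there, and likewise $\|u(t)\|_s\sim\varepsilon^{-1}|u_0(t)|$, using $u_\perp(t)=-T_3u_0(t)$ and the injectivity of $T_3^{(-1)}$. You must compare these two quantities. Stopping at $\|u(t)\|_s\gtrsim|v_0(t)|\gtrsim e^{\mu\varepsilon^2t}|\varphi_0|$ alone loses a factor $\varepsilon$.

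Your Step 1 is correct: the $H^s$ energy estimate on the nonzero-mode block, using that the $\mathrm b_2\partial_y$ coupling is skew-adjoint, is a simpler alternative to the paper's diagonalization by $\mathcal C$ followed by a Duhamel fixed point.
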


\begin{rmk} \label{rem.comparison}
{\bf (Comparison of the long-wave instability between Navier-Stokes and MHD).}
    The linear long wave instability for the two dimensional Navier-Stokes equation around the periodic shear flow $(U(y),0)$ in \eqref{eq:def_uS} has been studied in \cite{CDMV25}, extending in a more general case the stability of the periodic shear flow $(\sin(y),0)$ as proposed by Kolmogorov in \cite{AM60} and first investigated by Meshalkin \& Sinai \cite{MS61}.  
     When ${\rm b}_1 = {\rm b}_2 = 0$,  we recover the Navier-Stokes instability result from \cite{CDMV25} under the condition on the shear flow $\| \pa_{y}^{-1}U \|_{L^2} >  \nu$, since  the components of the linear operator $\cL_{\nu,\eta,\varepsilon}$ decouples into the Orr-Sommerfeld operator from the linearization of the momentum equation at the stationary state and an analogous one associated to the resistive induction equation. On the other hand, when ${\rm b}_{1}\neq 0$, we detect small unstable eigenvalues only in the resistivity-dominant regime $\eta >\nu$, whereas in the case $\nu \leq \eta$ all the eigenvalues of $\cL_{\nu,\eta,\varepsilon}$ are stable for $\varepsilon\ll 1$ small enough, as stated in items $(i)$ and $(iii)$ of Theorem \ref{theo.princ}. The precise expansion of the unstable eigenvalues will be obtained in Proposition \ref{prop:positive.zeromode}.
     \end{rmk}

  \begin{rmk} \label{rem.forces}
  {\bf (Physically relevant external forces).}
    In the Navier-Stokes framework, it is physically meaningful to study the stability of the shear flow $(U(y),0)$, since it 
    is a steady solution of 
 \begin{equation}\label{eq:NS_vort-curr}
 	\begin{cases}
 		\pa_t w + u\cdot\nabla w = \nu\Delta w + f \,, \\
 		u=\nabla^\perp\phi\,, \quad w=\Delta\phi \,,
 	\end{cases}  \textnormal{with external force} \quad f(\wtx,y)=\nu U'''(y) \,. 
 \end{equation}
For the MHD equations \eqref{eq:MHD_vort-curr}, it is not always the case that the stationary state $\big( (U(y),0), \bb \big)$  in \eqref{eq:def_uS}-\eqref{eq:def_bS} is also a steady solution. Indeed, as explained before, the momentum equation for $w$ is solved by $u= (U(y),0)$ and $b=\bb$ choosing the same forcing term (see \eqref{fix.force}), but, when solving the induction equation for $\jmath$, the error term
\begin{equation}\label{error.induction}
	{\rm b}_2U''(y) 
\end{equation}
is produces, which vanishes only if we either assume $U''=0$, which trivialize to $U(y)\equiv 0$ by the zero average condition and it is therefore not interesting, or ${\rm b}_2=0$. The problem that we face with the MHD system is that we cannot consider the option of introducing a forcing term in  the induction equation for $\jmath$ to compensate the error term in \eqref{error.induction}. The reason for this is purely physical. In a nutshell, the momentum equation for $w$ comes from Newton's second law, so an external body force (gravity, stirring, etc.) is a legitimate physical input. By contrast, the induction equation is not a force balance, as it is derived directly from Faraday's law combined with Ohm's law for a (perfectly or resistively) conducting fluid. Any real modification of the magnetic field $b$ must come from an electromotive force (e.g. an external current) and enters in the form of $\nabla\times g$ for some vector field $g$, obtaining the equation
\begin{equation}
    \pa_t b + u \cdot \nabla b - b\cdot \nabla u = \eta \Delta b + \nabla \times g \,, \quad \nabla\cdot b = 0 \,.
\end{equation}
In dimension two, we then have that $\nabla \times g$ is orthogonal to $b$, as well as $\nabla\times (\nabla \times g)$ to $\jmath = \nabla\times b$, which prevent the presence of forcing terms in the second equation of \eqref{eq:MHD_vort-curr}.

Summing up, the physically relevant case is only when ${\rm b}_2 = 0$. Nevertheless, we decided to include also the case ${\rm b}_2 \neq 0$ in our result for a pure mathematical interest, since, as will become clear later, its presence captures nontrivial contributions to the leading term, interacting with the viscosity $\nu>0$ and the resistivity $\eta>0$, see for instance \eqref{Dperp.intro}.
\end{rmk}

\subsection{Strategy of the proof}

We now describe the main ideas that lead to the proof of Theorem \ref{theo.princ}. Such proof is mostly based on the normal form approach developed in \cite{CDMV25}: we aim to construct a bounded and invertible transformation (on
Sobolev spaces) that molds the linearized operator into a much simpler one with a prescribed structure. Specifically for our case, the goal is to decouple the action of $\cL_{\nu,\eta,\varepsilon}$ in \eqref{cL.vare} on the mode $j=0$ from the non-zero modes in order to:
\begin{itemize}
    \item [(i)] Compute sharp asymptotics of the two eigenvalues relative to the mode $j=0$ and deduce conditions to have their real parts positive;
    \item [(ii)] Show that the remaining part of the spectrum of $\cL_{\nu,\eta,\varepsilon}$ is pure point and that such eigenvalues have negative real part.
\end{itemize}
Before giving more details, we first rewrite the operator $\cL_{\nu,\eta,\varepsilon}$ in \eqref{cL.vare}.
Using the notation (see also \eqref{eq:def_pi0})
\begin{align}
    \pi_{0}u := \frac{1}{2\pi} \int_{\T} u(y) \wrt y \,, \quad \pi_{0}^\perp := {\rm Id} - \pi_{0} \,,
 \end{align}
and the fact that $(\epsilon^2-\pa_y^2)^{-1}\pi_0=\epsilon^{-2}\pi_0$, we split $\cL_{\nu,\eta,\varepsilon}$ as
\begin{equation}\label{splitting.notpert.intro}
	\cL_{\nu,\eta,\epsilon} = \cD_{\nu,\eta} + \frac{\im}{\epsilon}\cS_0 - \im\epsilon\cR_{\nu,\eta,\epsilon},
\end{equation}
where
\begin{equation}\label{diag.intro}
	\cD_{\nu,\eta,\varepsilon}:= \begin{pmatrix}
		\nu\pa_{y}^2 & {\rm b}_2\pa_{y} \\
		{\rm b}_2\pa_{y} &\eta\pa_y^2
	\end{pmatrix},\, \quad 
	\cS_0:= \begin{pmatrix}
		-U''(y)\pi_0 & 0 \\
		0 & U''(y)\pi_0
	\end{pmatrix}\,
\end{equation}
and
\begin{equation} \label{R.intro}
	\cR_{\nu,\eta,\varepsilon} := \begin{pmatrix}
		-\im\varepsilon\nu + U(y) + U''(y) (\varepsilon^2-\pa_{y}^2)^{-1}\pi_0^\perp & -{\rm b}_1 \\
		-{\rm b}_1 & -\im\varepsilon\eta + U(y) - \big(U''(y) + 2U'(y)\pa_{y}\big) (\varepsilon^2-\pa_{y}^2)^{-1}\pi_0^\perp
	\end{pmatrix} \,.
\end{equation}
As in \cite{CDMV25}, rewriting $\cL_{\nu,\eta,\varepsilon}$ as in \eqref{splitting.notpert.intro} brings to light the non perturbative nature of the problem as $\varepsilon\rightarrow0^+$. On the other hand, the presence of several parameters $\nu,\eta, {\rm b}_1, {\rm b}_2$ and their interplay makes the analysis of the linear operator $\cL_{\nu,\eta,\varepsilon}$ different and more delicate compared to \cite{CDMV25}.

The proof of Theorem \ref{theo.princ}  consists of two main steps:
\\[1mm]
\noindent {\bf 1)} {\it Decoupling of the mode 0.} By the natural decomposition of the Sobolev space
\begin{align}
    H^s(\T) \times H^s(\T) & = \big( \C \oplus H_0^s(\T) \big) \times \big( \C \oplus H_0^s(\T) \big) = \C^2 \oplus \big( H_0^s (\T) \times H_0^s(\T) \big) \,,
\end{align}
we represent the action of the operator $\cL_{\nu,\eta,\varepsilon}$ by the matrix
\begin{equation}\label{matrix.tL.intro}
    \tL_{\nu,\eta,\varepsilon} = \begin{bmatrix}
        \Pi_{0} \cL_{\nu,\eta,\varepsilon} \Pi_{0} &  \Pi_{0} \cL_{\nu,\eta,\varepsilon} \Pi_{0}^\perp  \\
        \Pi_{0}^\perp \cL_{\nu,\eta,\varepsilon} \Pi_{0}  & \Pi_{0}^\perp \cL_{\nu,\eta,\varepsilon} \Pi_{0}^\perp 
    \end{bmatrix} \,,
\end{equation}
where we denoted
\begin{equation}
    \Pi_{0} \begin{pmatrix}
        h_1 \\ h_2
    \end{pmatrix} := \begin{pmatrix}
       \pi_0 h_1 \\ \pi_0 h_2
    \end{pmatrix} \in \C^2 \,, \quad  \Pi_{0}^\perp \begin{pmatrix}
        h_1 \\ h_2
    \end{pmatrix} := \begin{pmatrix}
       \pi^\perp_0 h_1 \\ \pi_0^\perp h_2
    \end{pmatrix}  \in H_0^s(\T) \times H_0^s(\T) \,.
\end{equation}
We aim to simplify the structure of $\tL_{\nu,\eta,\varepsilon}$ by conjugating it with an invertible transformation $\tT$ in such a way that
\begin{equation}\label{decompo.intro}
    \tT \tL_{\nu,\eta,\varepsilon} \tT^{-1} = \begin{bmatrix}
        M_{\nu,\eta}^{(0)} & 0 \\ 0 & \cL_{\nu,\eta,\varepsilon}^{(0)}
    \end{bmatrix},
\end{equation}
where the $2\times 2$ matrix $ M_{\nu,\eta}^{(0)}$ encodes the two eigenvalues relative to the mode $j=0$, and the operator $\cL_{\nu,\eta,\varepsilon}^{(0)}$ is a perturbed modification of the operator $\cL_{\nu,\eta,\varepsilon}$ with the action restricted only on $H_0^s(\T) \times H_0^s(\T)$, which will be analyzed later. To achieve such normal form, we search for the map $\tT$ of the form
\begin{equation}
    \tT = \begin{bmatrix}
        {\rm Id}_0 & \tT_{2} \\ \tT_{3} & {\rm Id}_\perp
    \end{bmatrix} \,,
\end{equation}
for some operators $\tT_2 \in \cB(H_0^s(\T) \times H_0^s(\T) , \C^2)$ and $\tT_3 \in \cB(\C^2, H_0^s(\T) \times H_0^s(\T) )$, with ${\rm Id}_0$ and ${\rm Id}_\perp$ being the identity maps on $\C^2$ and $H_0^s(\T) \times H_0^s(\T)$, respectively. A map $\tT$ of this form is invertible if we ensure that
\begin{equation}
    \| \tT_2 \tT_3 \|_{\cB(\C^2)}, \ \| \tT_3 \tT_2 \|_{\cB(H_0^s \times H_0^s )} \ll 1 \,.
\end{equation}
To obtain the desired structure after the conjugation, we determine the two operators $\tT_2$ and $\tT_3$ by solving two separate quadratic equations involving the entries of $\tL_{\nu,\eta,\varepsilon}$ in \eqref{matrix.tL.intro}. We achieve all these properties in Proposition \ref{prop:fixed_point}, obtaining that
\begin{equation}
    \tT_2 = O(\varepsilon^3) \quad \textnormal{and} \quad \tT_3 = O(\varepsilon^{-1}) \,,
\end{equation}
which implies that the transformation $\tT = {\rm Id} + O(\varepsilon^{-1})$ is ``far from the identity'', reflecting once more the non-perturbative nature of the problem. Finally, in Proposition \ref{prop:positive.zeromode} we explicitly compute the eigenvalues of the matrix $M_{\nu,\eta}^{(0)}$ with asymptotics expansions up to order $\varepsilon^2$, having previously computed expansions of the operator $\tT_3$ in powers of $\varepsilon$. The asymptotic expansions of these eigenvalues are used to deduce conditions for the instability of the mode $j=0$.
\\[1mm]
\noindent {\bf 2)} {\it Stability analysis of the non-zero modes.} Having the decomposition in \eqref{decompo.intro}, we need to prove now that $\cL_{\nu,\eta,\varepsilon}^{(0)}$ acting on $H_0^s(\T)\times H_0^s(\T)$ is spectrally stable, meaning that all its eigenvalues have negative real part. This is shown by a perturbative argument. We rewrite $\cL_{\nu,\eta,\varepsilon}^{(0)}$ as
\begin{equation}
    \cL_{\nu,\eta,\varepsilon}^{(0)} = \tD_{\perp\perp} + \cQ_{\varepsilon}^{(0)}
\end{equation}
where $\tD_{\perp\perp}$ is the unperturbed Fourier multiplier
\begin{equation}\label{Dperp.intro}
    \tD_{\perp\perp} := \begin{bmatrix}
        \nu\pa_{y}^2 & {\rm b}_2 \pa_y \\ 
        {\rm b}_{2} \pa_y & \eta \pa_y^2
    \end{bmatrix},
\end{equation}
and $\cQ_{\varepsilon}^{(0)}$ is a bounded operator on $H_0^s(\T)\times H_0^s(\T)$ of size $O(\varepsilon)$ in operator norm. In Lemma \ref{lem.eigen.D.perp.resist-iii}, we compute the unperturbed eigenvalues of $\tD_{\perp\perp}$, which, for any $j\in \Z\setminus\{0\}$, are given by
\begin{equation}
    \lambda_{\pm}(j) := -\frac{(\nu+\eta)j^2}{2} \pm \frac12 \sqrt{(\nu-\eta)^2 j^4 - 4 {\rm b}_2^2 j^2} \,, \quad \forall \, j \in \Z\setminus\{0\} \,,
\end{equation}
and satisfy ${\rm Re}(\lambda_{\pm}(j))\leq -\sigma |j|^2$ for some constant $\sigma\equiv \sigma(\nu,\eta,{\rm b}_2)>0$. After the diagonalization of $\tD_{\perp\perp}$, we will show in Lemma \ref{spettro mathcal L epsilon (1)} that the bounded perturbation $\cQ_{\varepsilon}^{(0)}$ does not destroy the spectral properties of $\tD_{\perp\perp}$. 

The combination of Proposition \ref{prop:positive.zeromode} and Lemma \ref{spettro mathcal L epsilon (1)}, together with the invertibility of the involved maps, will lead to the proof of Theorem \ref{theo.princ}, as presented in Section \ref{sect.conclusion}. Furthermore, in this conclusive section we will prove also Theorem \ref{thm:dinamico}, which will be deduced by the dynamics generated  by the $2 \times 2$ matrix $M_{\nu,\eta}^{(0)}(\varepsilon)$ in Lemma \ref{dinamica sistemino due per due} and by the dynamical stability of the operator $\cL_{\nu,\eta,\varepsilon}^{(0)}$ in Lemma \ref{stability dynamics}.

\section{Functional setting}
In this section, we collect some general definitions and properties concerning norms and matrix representation of operators which are used in the whole paper.

\smallskip

\noindent \textbf{Notations}. 
We set $\N:= \{1,2,3,...\}$ and $\N_{0}:= \N \cup \{0\}$. \\
In the whole paper, the notation $A \lesssim B$ means that there exists a constant $C=C(\nu,\eta,{\rm b}_2,S)>0$ depending on all the fixed parameters of the problem, namely $\nu,\eta>0$ and the Sobolev regularity $S>0$ such that $A\leq C(\nu,\eta,{\rm b}_2,S)B$.

\subsection{Function spaces}
Let $\K=\R,\C$ and $m=1,2$. 
 Given a function $u\in L^2(\T,\K^m)$, its Fourier expansion is given by
\begin{equation}
	u(y)=\sum_{j\in\Z}\whu (j)e^{\im jy}\,, \quad \whu(j):=\frac{1}{2\pi}\int_\T e^{-\im jy}u(y) \wrt y \in \K^m \,. 
\end{equation}
We shall work in the context of Sobolev spaces at scale $s\geq0$
\begin{equation}\label{spaziFunz}
\begin{aligned}
    H^s(\T,\K^m) &:= \Big\{ u(y) \in L^2(\T,\K^m) \, : \, \| u \|_{s}^2 := \sum_{j\in\Z}\la j\ra^{2s} |\whu (j)|^2 < \infty \Big\} \,,  \\
    H_0^s(\T,\K^m) &:= \Big\{ u\in H^s(\T,\K^m)  \, : \,\int_{\T} u(y)\wrt y = 0 \Big\}\,,
\end{aligned}
\end{equation}
with $ \braket{j}:= {\rm max}\{1, |j|\}$.
As a notation, we may also write $H^s\equiv H^s(\T)\equiv H^s(\T,\K^m)$ and the same for $H_0^s$. For $s > \frac12$ one has the inclusion $H^{s}(\T) \subset \cC^0(\T)$ on the set of continuous functions on $\T$. In addition, $H^s$ is an algebra. For any integer $s \geq 0$, we define $C^s(\T)$ as the space of $C^s$ functions from $\T \to \K^m$ with the standard norm
$$
\| u \|_{C^s} := \max_{0 \leq k \leq s} \sup_{y \in \T} |\partial_y^k u(y)|\,.
$$
If $s \geq 0$ is not an integer, we denote by $C^s(\T)$ the H\"older-Zygmund space $C^{k, \alpha}(\T)$, where $k$ is the integer part of $s$ and $\alpha := s - k$, which is the space of $C^k$ functions such that $\partial_y^k u$ is $\alpha$-H\"older. This space is equipped by the norm 
$$
\| u \|_{C^s} \equiv \| u \|_{C^{k, \alpha}} := \| u \|_{C^k} + \sup_{y_1 \neq y_2} \dfrac{|\partial_y^k u(y_1) - \partial_y^k u(y_2)|}{|y_1 - y_2|^\alpha}\,.
$$
We recall for any $s \geq 0$, the standard estimate (see \cite[Chapters 2,3]{BCD})
$$
\| u v \|_s \leq C(s) \| u \|_{C^s} \| v \|_s, \quad \forall \, u \in C^s(\T), \quad \forall \, v \in H^s(\T) \,,
$$
for some constant $C(s) > 0$. 

\noindent
Along the paper we shall also work on product spaces.
To this aim, recalling \eqref{spaziFunz} (with $m=1$), 
we define 
\begin{equation}
    \bH^s := H^s \times H^s \quad \textnormal{and} \quad \bH_0^s := H_0^s \times H_0^s \,.
\end{equation}
With a slight abuse of notation, we always 
denote by $\|\cdot\|_{s}$ the standard product norm 
on the spaces ${\bf H}^{s}$, ${\bf H}^{s}_0$.
We also denote $\bL^2 := L^2(\T) \times L^2(\T)$, with scalar product
\begin{equation}
    \braket{\big( \begin{smallmatrix}
        f_1 \\ f_2
    \end{smallmatrix} \big),\big( \begin{smallmatrix}
        g_1 \\ g_2
    \end{smallmatrix} \big)}_{\bL^2} := \braket{f_1,g_1}_{L^2}+\braket{f_2,g_2}_{L^2} \,, \quad \braket{f,g}_{L^2} := \int_{\T} f(x) \overline{g(x)} \wrt x \,.
\end{equation}
Given two Banach spaces $X,Y$, we denote by $\cB(X,Y)$ the space of bounded, linear operators $\cL : X \rightarrow Y$ equipped with the standard operator norm:
\begin{equation}
	\norm{\cL}_{\cB(X,Y)} := 
    \sup_{\norm{x}_X=1}\norm{\cL x}_Y.
\end{equation}
We also use sometimes the notation $\| \cL \|_{X \to Y}$ for the operator norm. For bounded linear operators  
$\cL : X \rightarrow X$ we shall simply denote by
$\cB(X)$ the set $\cB(X,X)$. As usual we identify the space of $2 \times 2$ matrices with ${\mathcal B}(\C^2)$. 

Finally, we also introduce the projections
\begin{equation}\label{eq:def_pi0}
	\pi_0 u:=\frac{1}{2\pi}\int_\T u(y) \wrt y\,,  \quad \pi_0^\perp:={\rm Id}-\pi_0 \,
\end{equation}
in such a way that any $u\in L^2(\T,\K^m)$ decomposes as $u =\pi_0 u+ \pi_0^\bot u$.

\subsection{Matrix block-decomposition of linear operators}\label{sect.matrix.repr}

The action of a linear operator $\cR$ over $L^2(\T,\C^2)$ is represented by
\begin{equation}\label{matrix.op.1234}
    \cR=\begin{pmatrix}
			\cR_1 & \cR_2 \\
			\cR_3 & \cR_4
		\end{pmatrix} \,,
\end{equation}
where each $\cR_i$ is a linear operator acting on scalar components in $L^2(\T,\R)$. 
In the following sections, we need to expand each matrix linear operator as in \eqref{matrix.op.1234} with matrix representations along the zero and non-zero modes of the Fourier basis. For every $s_1,s_2 \geq 0$, the following spaces are isomorphic:
\begin{equation}\label{splitting.iso}
	H^{s_1}\times H^{s_2} \simeq  \C^2 \oplus ( H_0^{s_1} \times H_0^{s_2}) \,, \quad 
    \text{with} \quad \vec{h} \mapsto \Pi_0 \vec{h} +  \Pi_{0}^\perp \vec{h} \,,
\end{equation}
where we introduced the following notation for the projections over vectors
\begin{align}\label{eq:def_Pi0}
	\Pi_0 & : H^{s_1}\times H^{s_2} \rightarrow \C^2, \quad \vec{h}=\begin{pmatrix}
	    h_1\\h_2
	\end{pmatrix} \mapsto \Pi_0[\vec{h}]:= \begin{pmatrix}
		\pi_0h_1 \\
		\pi_0h_2
	\end{pmatrix}, \\
	\Pi_0^\perp & : H^{s_1}\times H^{s_2} \rightarrow H_0^{s_1}\times H_0^{s_2}, \quad \vec{h}=\begin{pmatrix}
	    h_1\\h_2
	\end{pmatrix} \mapsto \Pi_0^\perp[\vec{h}]:=\begin{pmatrix}
		\pi_0^\perp h_1\\
		\pi_0^\perp h_2
	\end{pmatrix} \,,
\end{align}
with $\pi_0$ and $\pi_0^\perp$ as in \eqref{eq:def_pi0}. For $\cR$ as in \eqref{matrix.op.1234}, we also define
\begin{equation}
    \Pi_0\cR := \begin{pmatrix}
        \pi_0\cR_1 & \pi_0\cR_2 \\
        \pi_0\cR_3 & \pi_0\cR_4
    \end{pmatrix}\,, \quad \Pi_0^\perp \cR := \begin{pmatrix}
        \pi_0^\perp \cR_1 & \pi_0^\perp \cR_2 \\
        \pi_0^\perp \cR_3 & \pi_0^\perp \cR_4
    \end{pmatrix}\,, 
\end{equation}
so that
\begin{equation}
	\forall \, \vec{h}=\begin{pmatrix}
	    h_1\\h_2
	\end{pmatrix}\in H^{s_1}\times H^{s_2}\,, \quad \Pi_0\cR[\vec{h}] := \begin{pmatrix}
		\pi_0\cR_1[h_1] + \pi_0\cR_2[h_2] \\
		\pi_0\cR_3[h_1] + \pi_0\cR_4[h_2]
	\end{pmatrix},
\end{equation}
with analogous action for  $\Pi_0^\perp \cR$.
In addition, given any functions $f\in H_0^s(\T)$ and $g\in L_0^2(\T)$, we introduce the rank-1 operator of order 0 acting as
\begin{equation}\label{eq:traspose_operator_L2}
	L^2(\T)\ni h \mapsto f(y)g(y)^T[h] := \braket{h,g}_{L^2}f(y) \in H_0^s(\T).
\end{equation}
We may regard each bounded operator $\cL : H^{s_1}\times H^{s_2} \rightarrow H^{s_1'}\times H^{s_2'}$  as a matrix $\tL := \C^2 \oplus  (H_0^{s_1}\times H_0 ^{s_2}) \to \C^2 \oplus  (H_0^{s_1'}\times H_0 ^{s_2'})$ of the form
\begin{equation}\label{eq:matrix_repr}
	\tL := \begin{bmatrix}
		\Pi_0\cL[\vec{1}] & (\Pi_0^\perp\cL^*[\vec{1}])^T \\
		\Pi_0^\perp\cL[\vec{1}] & \cL^\perp
	\end{bmatrix},
\end{equation}
where $\vec{1}= \big(\begin{smallmatrix}
    1 \\ 1
\end{smallmatrix} \big)$,   $\cL^\perp:=\Pi_0^\perp\cL\Pi_0^\perp : H_0^{s_1}\times H_0^{s_2} \rightarrow H_0^{s_1'}\times H_0^{s_2'}$ 
and,
	making an abuse of notation (for notational convenience),  we have written, for any $\cL=\Big(\begin{smallmatrix}
			\cL_1 & \cL_2 \\
			\cL_3 & \cL_4
		\end{smallmatrix}\Big)$,
	\begin{equation}\label{pi0.matrix}
		\Pi_0\cL[\vec{1}] := \begin{pmatrix}
			\pi_0\cL_1[1] & \pi_0\cL_2[1] \\
			\pi_0\cL_3[1] & \pi_0\cL_4[1]
		\end{pmatrix}
	\end{equation}
	(analogous for $\Pi_0^\perp$) and
	\begin{equation}\label{rank.matrix}
		(\Pi_0^\perp\cL^*[\vec{1}])^T := \begin{pmatrix}
			(\pi_0^\perp\cL_1^*[1])^T & (\pi_0^\perp\cL_2^*[1])^T \\
			(\pi_0^\perp\cL_3^*[1])^T & (\pi_0^\perp\cL_4^*[1])^T
		\end{pmatrix} .
	\end{equation}
	Observe that the projections inside these matrices act over the space $H^s \simeq \C\oplus H_0^s$ and not on its product.
The representation \eqref{eq:matrix_repr} and the action of the operator $\cL$ determine each other.
\begin{lem}
	The following relation holds:
	\begin{equation}
		\tL \begin{bmatrix}
			\Pi_0[\vec{h}] \\
			\Pi_0^\perp[\vec{h}]
		\end{bmatrix} = \begin{bmatrix}
		\Pi_0\cL[\vec{h}] \\
		\Pi_0^\perp\cL[\vec{h}]
		\end{bmatrix} = \begin{bmatrix}
		\Pi_0\cL\Pi_0 & \Pi_0\cL\Pi_0^\perp\\
		\Pi_0^\perp\cL\Pi_0 & \Pi_0^\perp\cL\Pi_0^\perp
		\end{bmatrix}[\vec{h}] \,.
	\end{equation}
	In particular, the action of the matrix $\tL$ in \eqref{eq:matrix_repr} is linked to  the associated operator $\cL$.
\end{lem}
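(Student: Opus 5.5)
This is a direct verification: I will compute both sides componentwise on a general vector and compare them. Write $\vec h=(h_1,h_2)$. Use the decompositions $h_i=\pi_0h_i+\pi_0^\perp h_i$, where $\pi_0 h_i$ is a constant, so that $\pi_0 h_i = (\pi_0 h_i)\cdot 1$. The second equality in the statement is immediate from linearity of $\cL$ and $\vec h=\Pi_0\vec h+\Pi_0^\perp\vec h$:
\[
\Pi_0\cL[\vec h]=\Pi_0\cL\Pi_0[\vec h]+\Pi_0\cL\Pi_0^\perp[\vec h]\,,
\]
and the analogous identity holds for $\Pi_0^\perp$. So the content of the lemma is that the four blocks of $\tL$ in \eqref{eq:matrix_repr} act exactly as $\Pi_0\cL\Pi_0$, $\Pi_0\cL\Pi_0^\perp$, $\Pi_0^\perp\cL\Pi_0$ and $\Pi_0^\perp\cL\Pi_0^\perp$, once $\Pi_0[\vec h]\in\C^2$ is identified with the constant vector function $\big(\pi_0h_1,\pi_0h_2\big)$.

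\emph{The two blocks acting on the zero mode.} By linearity of each scalar entry, $\cL_i[\pi_0 h_m]=(\pi_0h_m)\,\cL_i[1]$. Hence the first component of $\Pi_0\cL\Pi_0[\vec h]$ is $\pi_0\cL_1[1]\,\pi_0h_1+\pi_0\cL_2[1]\,\pi_0h_2$, and the second is obtained in the same way with $\cL_3,\cL_4$. This is exactly the $2\times2$ matrix \eqref{pi0.matrix} applied to the vector $\Pi_0[\vec h]$. The same computation with $\pi_0^\perp$ in place of $\pi_0$ shows that the lower-left block $\Pi_0^\perp\cL[\vec 1]$, read as the matrix of functions $(\pi_0^\perp\cL_i[1])$ acting on $\C^2$, coincides with $\Pi_0^\perp\cL\Pi_0$.

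\emph{The upper-right block.} Here I use the definition \eqref{eq:traspose_operator_L2}, with the $\C^2$-valued output read as $\pi_0$ of the relevant function. For $g\in H^s_0$ I need to check that $\pi_0\cL_i[g]$ equals the action of $(\pi_0^\perp \cL_i^*[1])^T$ on $g$. Writing $\pi_0 u=\frac1{2\pi}\langle u,1\rangle_{L^2}$, one has
\[
2\pi\,\pi_0\cL_i[g]=\langle \cL_i g,1\rangle=\langle g,\cL_i^*1\rangle=\langle g,\pi_0^\perp\cL_i^*1\rangle\,,
\]
where the last step holds because $g$ has zero mean. This is the pairing defining the rank-one operator $\pi_0^\perp\cL_i^*[1]\,(g)^T$, up to the normalising constant $\frac1{2\pi}$, which I take to be absorbed into the convention of \eqref{eq:traspose_operator_L2}. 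Assembling the four entries gives $\Pi_0\cL\Pi_0^\perp$.

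\emph{The lower-right block.} This block is $\cL^\perp=\Pi_0^\perp\cL\Pi_0^\perp$ by definition.

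The only delicate point is the upper-right block. It requires the adjoint $\cL_i^*$ to exist, with $\cL_i^*[1]$ meaningful; for the differential and bounded operators considered here, $\cL_i^*[1]$ is a smooth function. It also requires keeping track of the bookkeeping: the rank-one operator must output a scalar rather than $f$, and the normalisation of $\pi_0$ must be accounted for. Everything else is linearity. Summing the four block identities yields the stated relation, and hence the equivalence between $\tL$ and $\cL$.
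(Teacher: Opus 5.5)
Your proof is correct and follows essentially the same route as the paper. You use linearity to reduce the zero-mode blocks to $\cL_i[1]$, and the adjoint identity $\langle \cL_i g,1\rangle_{L^2}=\langle g,\pi_0^\perp\cL_i^*[1]\rangle_{L^2}$ for mean-zero $g$ for the upper-right block. Your explicit tracking of the factor $\frac{1}{2\pi}$ and of the argument order in the pairing is, if anything, more careful than the paper, which sidesteps the normalization by writing $\Pi_0 f=\int_\T f\,{\rm d}y$.
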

\begin{proof} We compute the action of $\Pi_0 \cL$.
	According to the splitting of the space in \eqref{splitting.iso}, we have
	\begin{equation}
		\Pi_0\cL[\vec{h}] = \Pi_0\cL\Pi_0[\vec{h}] + \Pi_0\cL\Pi_0[\vec{h}].
	\end{equation}
	Using the linearity of $\cL$ and the fact that $\Pi_0[\vec{h}]=\Big( \begin{smallmatrix}
	  \pi_{0}[h_1] \\ \pi_{0}[h_2]  
	\end{smallmatrix} \Big) \in\C^2$, one has that $\cL_i\pi_0[h_j] = \cL_i[1]\pi_0[h_j]$, for each $i=1,\ldots,4$ and $j=1,2$. Then,
	\begin{equation}
		\Pi_0\cL\Pi_0[\vec{h}] = \Pi_0 \begin{pmatrix}
			\cL_1 & \cL_2 \\
			\cL_3 & \cL_4
		\end{pmatrix} \begin{pmatrix}
		\pi_0h_1 \\
		\pi_0h_2
		\end{pmatrix} = \begin{pmatrix}
		\Pi_0\cL_1[1]\pi_0h_1 + \Pi_0\cL_2[1]\pi_0h_2 \\
		\Pi_0\cL_3[1]\pi_0h_1 + \Pi_0\cL_4[1]\pi_0h_2
		\end{pmatrix} = \Pi_0\cL[1][\Pi_0[\vec{h}]].
	\end{equation}
	On the other hand, since $\Pi_0f=\int_\T f(y)\,{\rm d}y$ by definition and both $\Pi_0$, $\Pi_0^\perp$ are self-adjoint with respect to the $L^2$-scalar product, one has that
	\begin{align}
		\Pi_0\cL\Pi_0^\perp[\vec{h}] & = \begin{pmatrix}
			\int_\T \cL_1[\pi_0^\perp[h_1]]\wrt y + \int_\T \cL_2[\pi_0^\perp[h_2]] \wrt y \\
			\int_\T \cL_3[\pi_0^\perp[h_1]] \wrt y  + \int_\T \cL_4[\pi_0^\perp[h_2]] \wrt y
		\end{pmatrix} =  \begin{pmatrix}
		\la 1,\cL_1\pi_0^\perp h_1 \ra_{L^2} + \la 1, \cL_2\pi_0^\perp h_2 \ra_{L^2} \\
		\la 1, \cL_3\pi_0^\perp h_1 \ra_{L^2} + \la 1, \cL_4\pi_0^\perp h_2 \ra_{L^2}
		\end{pmatrix} \\
		& = \begin{pmatrix}
			\la \pi_0^\perp \cL_1^*[1],h_1 \ra_{L^2} + \la \pi_0^\perp \cL_2^*[1],h_2 \ra_{L^2} \\
			\la \pi_0^\perp \cL_3^*[1],h_1 \ra_{L^2} + \la \pi_0^\perp \cL_4^*[1],h_2 \ra_{L^2}
		\end{pmatrix} = (\Pi_0^\perp\cL^*[1])^T[\Pi_0^\perp[\vec{h}]].
	\end{align}
	recalling the notation  in \eqref{rank.matrix}. The computations for $\Pi_0^\perp\cL[\vec{h}]$ are analogous, and therefore omitted.
\end{proof}


\section{Properties of the linearized operator
}\label{sect.properties.L}

In this section, we compute the matrix representation of the operator $\cL_{\nu,\eta,\varepsilon}$ in \eqref{cL.vare} according to the definitions given in Section \ref{sect.matrix.repr}. In particular, in Section \ref{sect.expand.first} we compute its entries and their expansions in powers of $\varepsilon$, which highlights the non-perturbative nature of the problem. In doing so, we identify the unperturbed leading order operator $\tD_{\perp\perp}$: we study its invertibility and its diagonalization in Sections \ref{sect.invert.D} and \ref{sect.eigen.D}, respectively.
 
\subsection{Expansion of the linearized operator}\label{sect.expand.first}
In the next lemma, we rewrite the operator $\cL_{\nu,\eta,\varepsilon}$ in \eqref{cL.vare}  
with its block representation according to the notation of \eqref{eq:matrix_repr}.

\begin{lem}\label{lemma.adjoint}
The operator $\cL_{\nu,\eta,\varepsilon}$ in \eqref{cL.vare} is represented  by a block-matrix $\tL_{\nu,\eta,\varepsilon}$ of the form
\begin{equation}\label{tL.vare}
	\tL_{\nu,\eta,\varepsilon} = \left[ \begin{matrix}
		\tL_{00} & \tL_{0\perp} \\
		\tL_{\perp0} & \tL_{\perp\perp}
	\end{matrix} \right] := \left[\begin{matrix}
	\Pi_0\cL_{\nu,\eta,\epsilon}\Pi_0 & \Pi_0\cL_{\nu,\eta,\epsilon}\Pi_0^\perp\\
	\Pi_0^\perp\cL_{\nu,\eta,\epsilon}\Pi_0 & \Pi_0^\perp\cL_{\nu,\eta,\epsilon}\Pi_0^\perp
	\end{matrix}\right] \overset{\eqref{eq:matrix_repr}}{=} \left[\begin{matrix}
	\Pi_0\cL_{\nu,\eta,\epsilon}[\vec{1}] & (\Pi_0^\perp\cL_{\nu,\eta,\epsilon}^*[\vec{1}])^T \\
	\Pi_0^\perp\cL_{\nu,\eta,\epsilon}[\vec{1}] & \cL_{\nu,\eta,\epsilon}^\perp
	\end{matrix}\right],
\end{equation}
where 
the adjoint operator of $\cL_{\nu,\eta,\epsilon}$ 
is given by
	\begin{equation}\label{aggiunto.L}
		\cL_{\nu,\eta,\epsilon}^* = \begin{bmatrix}
			\cL_{\varepsilon,1}^* & \cL_{\varepsilon,2}^*\\
			\cL_{\varepsilon,3}^* & \cL_{\varepsilon,4}^*
		\end{bmatrix},
	\end{equation}
	with
    \begin{equation}\label{funzioniabcd}
	\begin{aligned}
		\cL_{\varepsilon,1}^* &:= \nu(\pa_y^2-\varepsilon^2) + \frac{\im}{\epsilon}\pi_0\circ(U''(y){\rm Id}) + \im\epsilon\big( U(y) + \pi_0^\perp(\epsilon^2-\pa_y^2)^{-1}\circ (U''(y){\rm Id}) \big)\,,\\
		\cL_{\varepsilon,2}^*=\cL_{\varepsilon,3}^* &:= - {\rm b}_2\pa_y -\im\varepsilon {\rm b}_1 \,, \\
		\cL_{\varepsilon,4}^* &:= \eta(\pa_{y}^2-\varepsilon^2) - \frac{\im}{\epsilon}\pi_0\circ(U''(y){\rm Id}) + \im\varepsilon \big( U(y) + \pi_0^\perp(\epsilon^2-\pa_y^2)^{-1}\circ (U''(y){\rm Id}+2U'(y)\pa_y) \big).
	\end{aligned}
    \end{equation}
\end{lem}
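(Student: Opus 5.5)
The plan is a direct computation in two parts: first identify the block form \eqref{tL.vare}, then compute the formal $L^2$-adjoint entry by entry.

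\textbf{Block representation.} The block representation in \eqref{tL.vare} follows at once from the preceding lemma on the matrix representation \eqref{eq:matrix_repr}, applied with $\cL=\cL_{\nu,\eta,\varepsilon}$. The only remaining content is the explicit form \eqref{aggiunto.L}--\eqref{funzioniabcd} of $\cL_{\nu,\eta,\varepsilon}^*$. This is needed to compute the entries $(\Pi_0^\perp\cL^*_{\nu,\eta,\varepsilon}[\vec 1])^T$.

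\textbf{Splitting off the zero mode.} Before taking adjoints, I will rewrite each diagonal entry of \eqref{cL.vare} using $\mathrm{Id}=\pi_0+\pi_0^\perp$ and $(\varepsilon^2-\pa_y^2)^{-1}\pi_0=\varepsilon^{-2}\pi_0$. I will also use $\pa_y\pi_0=0$. This gives
\begin{align}
-\im\varepsilon U''(\varepsilon^2-\pa_y^2)^{-1} &= -\tfrac{\im}{\varepsilon}U''\pi_0-\im\varepsilon U''(\varepsilon^2-\pa_y^2)^{-1}\pi_0^\perp,\\
\im\varepsilon(U''+2U'\pa_y)(\varepsilon^2-\pa_y^2)^{-1} &= \tfrac{\im}{\varepsilon}U''\pi_0+\im\varepsilon(U''+2U'\pa_y)(\varepsilon^2-\pa_y^2)^{-1}\pi_0^\perp .
\end{align}
This is exactly the splitting \eqref{splitting.notpert.intro}.

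\textbf{Adjoint of each piece.} The adjoint of a $2\times2$ operator matrix is the transpose of the matrix of entrywise adjoints. I will then use the following elementary facts, valid on smooth functions and extended by density.
\begin{itemize}
\item $\pa_y^*=-\pa_y$, so $\pa_y^2$ is self-adjoint.
\item The multipliers $\pi_0$, $\pi_0^\perp$ and $(\varepsilon^2-\pa_y^2)^{-1}$ are self-adjoint and commute with each other.
\item Multiplication by the real functions $U,U',U''$ is self-adjoint.
\item $(\im c A)^*=-\im c A^*$ for real $c$, and $(AB)^*=B^*A^*$.
\end{itemize}

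\textbf{Entry-by-entry computation.} For the off-diagonal entries, $(\im\varepsilon{\rm b}_1+{\rm b}_2\pa_y)^*=-\im\varepsilon{\rm b}_1-{\rm b}_2\pa_y$. Since the off-diagonal matrix is symmetric, the transposition leaves $\cL^*_{\varepsilon,2}=\cL^*_{\varepsilon,3}$.

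For the first diagonal entry:
\begin{itemize}
\item $\nu(\pa_y^2-\varepsilon^2)$ is self-adjoint;
\item $(-\im\varepsilon U)^*=\im\varepsilon U$;
\item $(-\tfrac{\im}{\varepsilon}U''\pi_0)^*=\tfrac{\im}{\varepsilon}\pi_0\circ U''$;
\item $(-\im\varepsilon U''(\varepsilon^2-\pa_y^2)^{-1}\pi_0^\perp)^*=\im\varepsilon\,\pi_0^\perp(\varepsilon^2-\pa_y^2)^{-1}\circ U''$.
\end{itemize}
These reproduce $\cL^*_{\varepsilon,1}$.

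For the fourth entry, the zero-mode part gives $(\tfrac{\im}{\varepsilon}U''\pi_0)^*=-\tfrac{\im}{\varepsilon}\pi_0\circ U''$. For the nonzero-mode part I need $(U''+2U'\pa_y)^*=U''-2\pa_y\circ U'=-U''-2U'\pa_y$, using the product rule $\pa_y\circ U'=U''+U'\pa_y$. Hence
\begin{equation}
\big(\im\varepsilon(U''+2U'\pa_y)(\varepsilon^2-\pa_y^2)^{-1}\pi_0^\perp\big)^*=-\im\varepsilon\,\pi_0^\perp(\varepsilon^2-\pa_y^2)^{-1}(-U''-2U'\pa_y),
\end{equation}
which equals $\im\varepsilon\,\pi_0^\perp(\varepsilon^2-\pa_y^2)^{-1}\circ(U''+2U'\pa_y)$. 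Together with the self-adjoint term $\eta(\pa_y^2-\varepsilon^2)$ and $(-\im\varepsilon U)^*=\im\varepsilon U$, this gives $\cL^*_{\varepsilon,4}$.

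The only delicate step is this last one: the sign bookkeeping for the first-order term $2U'\pa_y$. There the integration by parts produces a $-2U''$ that must exactly flip the sign of $U''$. Everything else is routine symbol manipulation.
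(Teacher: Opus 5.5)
Your proposal is correct and follows the paper's proof. Both arguments split off the zero mode via $(\varepsilon^2-\pa_y^2)^{-1}\pi_0=\varepsilon^{-2}\pi_0$ and then compute the adjoints entry by entry, using that the Fourier multipliers are self-adjoint, that $U$ is real, and that $\pa_y\circ U'=U''+U'\pa_y$ for the first-order term. You also correctly observe that the matrix of entrywise adjoints in \eqref{aggiunto.L} coincides with the true $L^2$-adjoint only because $\cL_{\varepsilon,2}=\cL_{\varepsilon,3}$; this is a useful clarification.
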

\begin{proof}
We just have to explicitly compute the entries \eqref{funzioniabcd} of \eqref{aggiunto.L}.
    Recalling the splitting of $\cL_{\nu,\eta,\varepsilon}$ in \eqref{splitting.notpert.intro}, we denote its entries by
    \begin{align}
        \cL_{\varepsilon,1} & := \nu(\pa_y^2-\varepsilon^2) - \frac{\im}{\varepsilon}U''(y)\pi_0 -\im\varepsilon\big( U(y) + U''(y)(\epsilon^2-\pa_y^2)\pi_0^\perp \big) \,, \\
        \cL_{\varepsilon,2} = \cL_{\varepsilon,3} & := {\rm b}_2\pa_y + \im\varepsilon{\rm b}_1 \,, \\
    \cL_{\varepsilon,4} & := \eta(\pa_y^2-\varepsilon^2) + \frac{\im}{\varepsilon}U''(y)\pi_0 -\im\varepsilon\big( U(y) -(U''(y)+2U'(y)\pa_y)(\epsilon^2-\pa_y^2)\pi_0^\perp \big) \,.
    \end{align}
    First, we compute $\cL_{\varepsilon,2}^{*}$. For any $f,g\in H^s(\T)$ we have
        \begin{align}
        \langle f, \cL_{\varepsilon,2}^*[g] \rangle_{L^2} & =  \langle  \cL_{\varepsilon,2}[f],g\rangle_{L^2} = \langle ({\rm b}_2\pa_y + \im\varepsilon{\rm b}_1)f,g \rangle_{L^2} = \langle f,( - {\rm b}_2\pa_y -\im\varepsilon{\rm b}_1)g\rangle_{L^2} \,.
    \end{align}
    We now compute  $\cL_{\varepsilon,4}^*$. The computations for $\cL_{\varepsilon,1}^*$ are analogous and therefore omitted. 
    For any $f,g\in H^s(\T)$, we have
    \begin{align}
         \langle f, \cL_{\varepsilon,2}^*[g] \rangle_{L^2} & =  \langle  \cL_{\varepsilon,2}[f],g\rangle_{L^2} \label{caldo 0}\\
         & =  \Big\langle [\eta(\pa_y^2-\varepsilon^2) + \frac{\im}{\varepsilon}U''(y)\pi_0 - \im\varepsilon U(y)]f,g \Big\rangle_{L^2}  +  \langle \im\varepsilon ( U''(y)+2 U'(y) \pa_y) (\varepsilon^2-\pa_y^2)^{-1}  \pi_0^\perp f,g \rangle_{L^2} \,.
    \end{align}
    We look at each term separately. On the one hand, we have
    \begin{equation}
        \Big\langle \Big[\eta(\pa_y^2-\varepsilon^2) + \frac{\im}{\varepsilon}U''(y)\pi_0 - \im\varepsilon U(y)\Big]f,g \Big\rangle_{L^2} = \Big\langle f,\Big[\eta(\pa_y^2-\varepsilon^2) - \frac{\im}{\varepsilon}\pi_0 \circ (U''(y){\rm Id}) +\im\epsilon U(y)\Big]g \Big\rangle_{L^2} \,, \label{caldo 1}
    \end{equation}
    where we have used the self-adjointness of $\pa_y^2-\varepsilon^2$ and that $U(y) \in \R$. We now compute
    \begin{align}
        \langle \im\varepsilon  ( U''(y)+2 U'(y) \pa_y) (\varepsilon^2-\pa_y^2)^{-1}  \pi_0^\perp f,g \rangle_{L^2} &  = \langle  f, -\im\varepsilon \pi_0^\perp \circ (\varepsilon^2-\pa_{y}^2)^{-1} ( U''(y) - 2 \pa_{y} \circ U'(y) ) g \rangle_{L^2} \label{caldo 2} \\
         &  = \langle  f, \im\varepsilon \pi_0^\perp \circ (\varepsilon^2-\pa_{y}^2)^{-1} ( U''(y)  + 2   U'(y)\pa_{y} ) g \rangle_{L^2}
    \end{align}
 where we have used the self-adjointness of $(\pa_y^2-\varepsilon^2)^{-1}$, the reality of $U(y) \in \R$ and that $\pa_y \circ U'(y) = U''(y) + U'(y) \pa_y$. Collecting all the contributions \eqref{caldo 1}, \eqref{caldo 2} back in \eqref{caldo 0}, we deduce that
 \begin{equation}
      \langle f,\cL_{\varepsilon,4}^*[g] \rangle_{L^2} = \Big\langle f, \Big[\eta(\pa_y^2-\varepsilon^2) - \frac{\im}{\epsilon}\pi_0\circ(U''(y){\rm Id}) + \im\varepsilon \big( U(y) + \pi_0^\perp \circ (\varepsilon^2-\pa_y^2)^{-1} \circ (U''(y){\rm Id}+2U'(y)\pa_y) \big)\Big]g \Big\rangle_{L^2} \,.
 \end{equation}
    This concludes the proof of the claimed expressions in \eqref{funzioniabcd}.
\end{proof}

We now provide expansions in powers of $\varepsilon$ of the operator in 
\eqref{tL.vare}. We first need a technical lemma.
\begin{lem}\label{lem:expansion_Laplacian_resist}
	In $H_0^s(\T)$, the following expansion holds
	\begin{equation}\label{lapla.expand}
		(\varepsilon^2-\pa_y^2)^{-1}\Pi_0^\perp = -\pa_y^{-2}\Pi_0^\perp + \varepsilon^{2} \cQ^{(2)}(\varepsilon)\,, \quad \cQ^{(2)}(\varepsilon) := 
        (\varepsilon^2-\pa_y^2)^{-1}\pa_y^{-2} \,,
	\end{equation}
    where $\| \cQ^{(2)}(\varepsilon) \|_{\cB(H_0^s,H_0^{s+2})}\leq 1$.
\end{lem}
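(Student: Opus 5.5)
We work entirely on the Fourier side, since every operator in \eqref{lapla.expand} is a Fourier multiplier on $H_0^s(\T)$, that is, on the modes $j\in\Z\setminus\{0\}$. On $e^{\im jy}$ with $j\neq0$ the multipliers are:
\begin{itemize}
\item $(\varepsilon^2-\pa_y^2)^{-1}$ acts as $(\varepsilon^2+j^2)^{-1}$;
\item $-\pa_y^{-2}$ acts as $-(\im j)^{-2}=j^{-2}$.
\end{itemize}
The identity therefore reduces to a scalar identity for each $j\neq0$. We use the elementary partial-fraction relation
\begin{equation}
\frac{1}{\varepsilon^2+j^2}-\frac{1}{j^2}=\frac{j^2-(\varepsilon^2+j^2)}{j^2(\varepsilon^2+j^2)}=-\varepsilon^2\,\frac{1}{(\varepsilon^2+j^2)j^2}\,.
\end{equation}

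Next we identify the correction term with the stated $\cQ^{(2)}(\varepsilon)$. The multiplier of $(\varepsilon^2-\pa_y^2)^{-1}\pa_y^{-2}$ on mode $j$ is $(\varepsilon^2+j^2)^{-1}(\im j)^{-2}=-\big((\varepsilon^2+j^2)j^2\big)^{-1}$. This matches the right-hand side above, so
\begin{equation}
(\varepsilon^2-\pa_y^2)^{-1}\Pi_0^\perp=-\pa_y^{-2}\Pi_0^\perp+\varepsilon^2(\varepsilon^2-\pa_y^2)^{-1}\pa_y^{-2}
\end{equation}
holds mode by mode, and hence on $H_0^s$. One should keep track of the sign convention $\pa_y^{-2}u=\sum_{j\neq0}(\im j)^{-2}\whu(j)e^{\im jy}$ from the introduction; with it, the signs are consistent.

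For the bound, take $u\in H_0^s$ and write the $H^{s+2}$ norm directly:
\begin{equation}
\|\cQ^{(2)}(\varepsilon)u\|_{s+2}^2=\sum_{j\neq0}\langle j\rangle^{2s+4}\frac{|\whu(j)|^2}{(\varepsilon^2+j^2)^2j^4}\,.
\end{equation}
For $j\neq0$ we have $\langle j\rangle=|j|$, so $\langle j\rangle^4/\big((\varepsilon^2+j^2)^2j^4\big)\le j^4/j^8=j^{-4}\le1$. It follows that $\|\cQ^{(2)}(\varepsilon)u\|_{s+2}\le\|u\|_s$. The image also has zero mean, since no $j=0$ mode appears.

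There is no real obstacle here. The only point requiring care is the sign bookkeeping between $-\pa_y^{-2}$ and $j^{-2}$, which the computation above settles.
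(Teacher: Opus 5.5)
Your proof is correct and follows essentially the paper's argument. The paper obtains the identity at the operator level by writing $h=(\pa_y^2-\varepsilon^2+\varepsilon^2)\pa_y^{-2}h$ on $H_0^s$, which is exactly your mode-by-mode partial-fraction identity, and then bounds $\cQ^{(2)}(\varepsilon)$ by the same Fourier-side computation. The only difference is that the paper gets the marginally sharper constant $(1+\varepsilon^2)^{-2}$ where you get $j^{-4}$; both are at most $1$.
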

\begin{proof}
	Let $h\in H_0^s(\T)$. Then,
	\begin{equation}
		(\varepsilon^2-\pa_y^2)^{-1}\Pi_0^\perp[h] = (\varepsilon^2-\pa_y^2)^{-1}(\pa_y^2 -\varepsilon^2+\varepsilon^2)\pa_y^{-2}h =- \pa_y^{-2}h +\varepsilon^2(\varepsilon^2-\pa_y^2)^{-1}\pa_y^{-2}h 
	\end{equation}
	which proves the expansion in \eqref{lapla.expand}. Moreover,
    for $h\in H_0^{s}$,
    we have
    \begin{align}
        \|\cQ^{(2)}(\varepsilon)[h]\|_{s+2}^2 
        = \sum_{j\neq0} \la j\ra^{2(s+2)}
        \Big|\tfrac{1}{j^{2}(j^2+\epsilon^2)}\Big|^2 
        |\whh(j)|^2 
        \leq \big(\tfrac{1}{1+\epsilon^2}\big)^2
        \| h \|_{s}^2 \leq 
        \| h \|_{s}^2 \,,
    \end{align}
    from which we deduce the desired estimate.
\end{proof}

We are now in position to prove the following.

\begin{lem}\label{espandi.e.non.mollo}
  Let $U \in C^{S + 2}(\T)$, $S \geq 0$.  The components of the operator $\tL_{\nu,\eta,\varepsilon}$ in \eqref{tL.vare} read as
    \begin{align}
    \tL_{00}:\C^2\to \C^2 \,, \quad \tL_{00} & = \begin{bmatrix}
		-\nu\varepsilon^2 & \im\varepsilon{\rm b}_1 \\
		\im\varepsilon{\rm b}_1 & -\eta\varepsilon^2
	\end{bmatrix}\,,  \label{L00}  \\
     \tL_{0\perp}:\bH_0^s \to \C^2 \,, \quad \tL_{0\perp} & = \begin{bmatrix}
		[\im\varepsilon^3(\varepsilon^2-\pa_y^2)^{-1}U(y)]^T & 0^T \\
		0^T & [\im\varepsilon^3(\varepsilon^2-\pa_y^2)^{-1}U(y)]^T
	\end{bmatrix} \,, \label{L0p} \\
     \tL_{\perp0}:\C^2\to \bH_{0}^s \,, \quad \tL_{\perp0} & = \begin{bmatrix}
		-\im\varepsilon U(y)-\frac{\im}{\varepsilon}U''(y) & 0 \\
		0 & -\im\varepsilon U(y) + \frac{\im}{\varepsilon}U''(y)
	\end{bmatrix}\,, \label{Lp0} \\
     \tL_{\perp\perp}:\bH_0^s \to \bH_{0}^s \,, \quad  \tL_{\perp\perp} &= 
        \tD_{\perp\perp} + \tR_{\perp\perp}\,, \label{Lpp}
\end{align}
where
\begin{align}
    \tD_{\perp\perp} & := \begin{bmatrix}
			\nu\pa_y^2 & {\rm b}_2\pa_y \\
			{\rm b}_2\pa_y  & \eta\pa_y^2
		\end{bmatrix} \,,  \label{D.perperp} \\
    \tR_{\perp\perp} & := \begin{bmatrix}
		-\nu\varepsilon^2 - \im\varepsilon\pi_0^\perp\big( U(y) + U''(y) (\varepsilon^2-\pa_{y}^2)^{-1}  \big) & \im\varepsilon{\rm b}_1 \\
		\im\varepsilon{\rm b}_1 & -\eta\varepsilon^2-\im\varepsilon\pi_0^\perp\big(U(y) - (U''(y) + 2U'(y)\pa_{y}) (\varepsilon^2-\pa_{y}^2)^{-1} \big) 
		\end{bmatrix} \,.
\end{align}
In particular we have the expansions
 \begin{align}
		\tL_{00}&= \varepsilon\begin{bmatrix}
			0 & \im{\rm b}_1 \\
			\im{\rm b}_1 & 0
		\end{bmatrix} + \varepsilon^2\begin{bmatrix}
			-\nu & 0 \\
			0 & -\eta
		\end{bmatrix} =: \varepsilon\tL_{00}^{(1)} + \varepsilon^2\tL_{00}^{(2)} \,, \label{L00.esp} \\
		\tL_{0\perp}&= \varepsilon^3\begin{bmatrix}
			-(\im\pa_y^{-2}U(y))^T & 0^T \\
			0^T & -(\im\pa_y^{-2}U(y))^T
		\end{bmatrix} +  \varepsilon^5 \tT_{0\perp}^{(5)}(\varepsilon) =: \varepsilon^3 \tL_{0\perp}^{(3)} + \varepsilon^5 \tT_{0\perp}^{(5)}(\varepsilon) \,, \label{Lop.eps}  \\
		\tL_{\perp 0} &= \varepsilon^{-1}\begin{bmatrix}
			-\im U''(y) & 0 \\
			0 & \im U''(y)
		\end{bmatrix}+ \varepsilon\begin{bmatrix}
			-\im U(y) & 0 \\
			0 & -\im U(y)
		\end{bmatrix} =: \varepsilon^{-1}\tL_{\perp 0}^{(-1)} + \varepsilon\tL_{\perp 0}^{(1)} \,. \label{Lp0.eps}
	\end{align}
    with the following estimates for any $0 \leq s \leq S$: 
    \begin{align}
          \| \tL_{00} \|_{\cB(\C^2)} & \lesssim \varepsilon \,, \quad   \| \tL_{0\perp}\|_{\cB(\bH_0^s,\C^2)}\lesssim \varepsilon^3 \,, \quad  \| \tL_{\perp 0}\|_{\cB(\C^2,\bH_0^s)}\lesssim \varepsilon^{-1}\,,\label{esti.easy} \\
           \| \tT_{0\perp}^{(5)}(\varepsilon) \|_{\cB(\bH_0^s,\C^2)}& \lesssim 1\,,\label{stimaT5nonbanale} \\
             \|  \tR_{\perp\perp}\|_{\cB(\bH_0^s)} &\lesssim \varepsilon \,.
             \label{Lpp.eps}
    \end{align}
\end{lem}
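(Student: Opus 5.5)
The plan is to compute the four blocks directly from the matrix representation \eqref{eq:matrix_repr}, using the explicit entries of $\cL_{\nu,\eta,\varepsilon}$ from \eqref{splitting.notpert.intro}--\eqref{R.intro} and of its adjoint from Lemma \ref{lemma.adjoint}. The expansions then follow from Lemma \ref{lem:expansion_Laplacian_resist}, and the operator bounds from the product estimate $\|uv\|_s\le C(s)\|u\|_{C^s}\|v\|_s$ together with $U\in C^{S+2}$.

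\textbf{Blocks acting on constants.} For $\tL_{00}$ and $\tL_{\perp0}$ I apply each scalar entry $\cL_{\varepsilon,i}$ to the constant $1$. Since $\pi_0 1=1$, $\pi_0^\perp 1=0$ and $\pa_y 1=0$, the off-diagonal entries give $\im\varepsilon{\rm b}_1$. The diagonal entries give
\[
-\nu\varepsilon^2-\im\varepsilon U-\tfrac{\im}{\varepsilon}U'', \qquad -\eta\varepsilon^2-\im\varepsilon U+\tfrac{\im}{\varepsilon}U''.
\]
Because $U$ and $U''$ have zero average, $\pi_0$ keeps only the constant parts, which yields \eqref{L00}. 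Applying $\pi_0^\perp$ instead removes the constants and leaves \eqref{Lp0}. The splittings \eqref{L00.esp} and \eqref{Lp0.eps} can then be read off directly. For the bounds I use $\|U''\|_s\lesssim\|U\|_{C^{S+2}}$, which gives $\|\tL_{\perp0}\|\lesssim\varepsilon^{-1}$, and the $\tL_{00}$ bound is immediate.

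\textbf{Block $\tL_{0\perp}$.} I use $\pi_0^\perp\cL^*_{\varepsilon,i}[1]$ from \eqref{funzioniabcd}. The off-diagonal entries vanish on constants up to a constant, which $\pi_0^\perp$ kills, so they give $0^T$. For the diagonal entries, $\pi_0^\perp$ removes the $\pi_0$ terms and the constant $-\nu\varepsilon^2$, leaving
\[
\im\varepsilon\big(U+(\varepsilon^2-\pa_y^2)^{-1}U''\big)=\im\varepsilon(\varepsilon^2-\pa_y^2)^{-1}\big[(\varepsilon^2-\pa_y^2)U+U''\big]=\im\varepsilon^3(\varepsilon^2-\pa_y^2)^{-1}U.
\]
In the fourth entry the factor $2U'\pa_y$ annihilates $1$, so the same cancellation occurs. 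Inserting Lemma \ref{lem:expansion_Laplacian_resist} gives
\[
\im\varepsilon^3(\varepsilon^2-\pa_y^2)^{-1}U=-\im\varepsilon^3\pa_y^{-2}U+\varepsilon^5\,\im\,\cQ^{(2)}(\varepsilon)U,
\]
so I define $\tT^{(5)}_{0\perp}$ through the rank-one maps $(\im\cQ^{(2)}(\varepsilon)U)^T$. The bound \eqref{stimaT5nonbanale} then follows from $|\langle h,g\rangle|\le\|h\|_s\|g\|_{L^2}$ and $\|\cQ^{(2)}U\|_{L^2}\le\|U\|_{L^2}$. The $\varepsilon^3$ bound is analogous.

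\textbf{Block $\tL_{\perp\perp}$.} I compress each entry by $\pi_0^\perp$. Terms containing $\pi_0$ vanish on $H_0^s$. The off-diagonal entry ${\rm b}_2\pa_y$ maps $H_0$ to $H_0$, and the $\varepsilon$-terms give $\tR_{\perp\perp}$. For \eqref{Lpp.eps}, the multiplication by $U$ is bounded on $H^s$ by the product estimate. The operator $U''(\varepsilon^2-\pa_y^2)^{-1}$ is bounded because $(\varepsilon^2-\pa_y^2)^{-1}$ is bounded by $1$ on $H_0^s$. The term $U'\pa_y(\varepsilon^2-\pa_y^2)^{-1}$ is the only one needing care: $\pa_y(\varepsilon^2-\pa_y^2)^{-1}$ has symbol $\im j/(j^2+\varepsilon^2)$, of modulus at most $1$, and $U'\in C^{S+1}$ handles the multiplication. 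Each of these terms carries a prefactor $\varepsilon$ or $\varepsilon^2$, so $\|\tR_{\perp\perp}\|\lesssim\varepsilon$.

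I expect the only delicate point to be the exact cancellation $U+(\varepsilon^2-\pa_y^2)^{-1}U''=\varepsilon^2(\varepsilon^2-\pa_y^2)^{-1}U$ in $\tL_{0\perp}$. It is what upgrades the naive order $\varepsilon$ to $\varepsilon^3$, and it requires tracking signs in the adjoint carefully, in particular the sign in front of $U''$ in the fourth entry.
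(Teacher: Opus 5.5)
Your proposal is correct and follows essentially the same route as the paper. You read off the blocks by applying $\cL_{\nu,\eta,\varepsilon}$ and its adjoint $\cL^*_{\nu,\eta,\varepsilon}$ to $\vec 1$, obtain the $\varepsilon^3$ gain in $\tL_{0\perp}$ from the identity $U+(\varepsilon^2-\pa_y^2)^{-1}U''=\varepsilon^2(\varepsilon^2-\pa_y^2)^{-1}U$ (which the paper attributes to Lemma \ref{lem:expansion_Laplacian_resist}), and bound $\tR_{\perp\perp}$ via the product estimate together with the uniform boundedness of $(\varepsilon^2-\pa_y^2)^{-1}$ and $\pa_y(\varepsilon^2-\pa_y^2)^{-1}$ on $H_0^s$.
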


\begin{proof}
Recalling 
Lemma \ref{lemma.adjoint} and the notation in \eqref{pi0.matrix},
one gets formul\ae\, \eqref{L00}, \eqref{Lp0}, \eqref{Lpp} and \eqref{D.perperp}.
The expansions \eqref{L00.esp}, \eqref{Lp0.eps} follow trivially.
Therefore, we focus on the term 
$\tL_{0\perp}=
(\Pi_{0}^{\perp}\mathcal{L}^{*}_{\nu,\eta,\varepsilon
}[\vec{1}])^{T}$ (see \eqref{tL.vare}) in order to prove \eqref{Lp0}, with expansion \eqref{Lop.eps}. 
we have to compute, recalling the notation in \eqref{rank.matrix},
\begin{equation}
    \tL_{0\perp} =
(\Pi_{0}^{\perp}\mathcal{L}^{*}_{\nu,\eta,\varepsilon
}[\vec{1}])^{T} = \begin{bmatrix}
    (\pi_0^\perp \cL_{\varepsilon,1}^*[1])^T &    (\pi_0^\perp \cL_{\varepsilon,2}^*[1])^T \\  (\pi_0^\perp \cL_{\varepsilon,3}^*[1])^T &  (\pi_0^\perp \cL_{\varepsilon,4}^*[1])^T
\end{bmatrix} \,.
\end{equation}
where the operators $\cL_{\varepsilon,m}^*$, $m=1,2,3,4$, are as in \eqref{funzioniabcd}. First, we clearly have
\begin{equation}
     \pi_0^\perp \cL_{\varepsilon,2}^*[1]  =  \pi_0^\perp \cL_{\varepsilon,3}^*[1] = 
     \pi_0^\perp (-\im \varepsilon {\rm b}_1) = 0 \,.
\end{equation}
Now, by Lemma \ref{lem:expansion_Laplacian_resist}, recalling that $\int_{\T} U(y)\wrt y = 0$,
we get
\[
\begin{aligned}
\pi_0^{\perp}\cL_{\varepsilon,1}^*[1] &= \pi_0^\perp \Big( - \nu  \varepsilon^2 - \im \varepsilon^{-1} \pi_0 (U''(y)) + \im \varepsilon \big( U(y) + \pi_0^\perp(\epsilon^2-\pa_y^2)^{-1}U''(y)  \big) \Big)  \\
&  = \pi_0^\perp \Big( \im \varepsilon \big( U(y) + \pi_0^\perp(\epsilon^2-\pa_y^2)^{-1}U''(y)  \big) \Big) \\
& \stackrel{\eqref{lapla.expand}}{=} \pi_0^\perp \big( \im \varepsilon^3 (\varepsilon^2-\pa_{y}^2)^{-1}U(y) \big) \,.
\end{aligned}
\]
A similar computation on the function $\cL_{\varepsilon,4}^*[1]$ implies \eqref{L0p}.
Applying again Lemma \ref{lem:expansion_Laplacian_resist} to
\eqref{L0p}, we deduce the expansion in \eqref{Lop.eps} with
\[
\tT_{0\perp}^{(5)}(\e):=
\begin{bmatrix}
			({\rm i}(\varepsilon^2-\pa_y^2)^{-1}\pa_y^{-2}U(y))^T & 0^T \\
			0^T & ({\rm i}(\varepsilon^2-\pa_y^2)^{-1}\pa_y^{-2}U(y))^T
		\end{bmatrix}\,.
\]
Finally, we prove the estimates \eqref{esti.easy}, \eqref{stimaT5nonbanale}, \eqref{Lpp.eps}. First, the estimate $\| \tL_{00}\|_{\cB(\C^2,\C^2)}\lesssim \varepsilon$ in \eqref{esti.easy} follows directly from \eqref{L00}. 
Now consider the operators $\tL_{0\perp}^{(3)}, \tT_{0\perp}^{(5)}$
in \eqref{Lop.eps}.
Observing that, for any $h\in H_0^s(\T)$,
\begin{align}
    \big| \big( (\pa_{y}^2)^{-1} U(y) \big)^T h\big| & = \big| \braket{h,(\pa_{y}^2)^{-1} U}_{L^2} \big|  = \big| \braket{(\pa_{y}^2)^{-1} h, U}_{L^2} \big| \\
   & \leq \|  (\pa_{y}^2)^{-1} h \|_{L^2} \| U \|_{L^2}  \lesssim \| h \|_{L^2} \| U \|_{C^0} \,,
\end{align}
which implies, for any $s\geq 0$,
$\| \tL_{0\perp}^{(3)}  \|_{\cB(\bH_0^s,\C^2)}\lesssim 1$.
Similarly one has
\begin{align}
    \big| \big( (\e^{2}-\pa_{y}^2)^{-1}\pa_{y}^{-2} U(y) \big)^T h\big| & = \big| \braket{h,(\e^2-\pa_{y}^2)^{-1}\pa_{y}^{-2} U}_{L^2} \big|  = 
    \big| \braket{(\e^2-\pa_{y}^2)^{-1}\pa_{y}^{2} h, U}_{L^2} \big| 
    \\
   & \leq \|  (\e^{2}-\pa_{y}^2)^{-1} \pa_{y}^{2}h \|_{L^2} \| U \|_{L^2}  \lesssim \| h \|_{L^2} \| U \|_{C^0} \,,
\end{align}
we conclude that, for any $s\geq 0$ one has
$\| \tT_{0\perp}^{(5)}  \|_{\cB(\bH_0^s,\C^2)}\lesssim 1$.
The estimates above imply
 \eqref{esti.easy} and \eqref{stimaT5nonbanale}.
Let us now consider the operator $L_{0\perp}$. The prove the estimate 
\eqref{esti.easy}, we just consider the term $L_{\perp0}^{(-1)}$.
One has
\[
\begin{aligned}
\|L_{\perp0}^{(-1)}\|_{\mathcal{B}(\C^2,{\bf H}^{s}_0)}
\lesssim \|U''\|_{s}\lesssim \|U\|_{s+2}\,.
\end{aligned}
\]
The estimates for $L_{\perp0}^{(1)}$ are similar, and hence one gets the \eqref{esti.easy}. Let us now consider the operator $\tR_{\perp\perp}$.
Let us first define
\[
\begin{aligned}
\tF_1&:=	-\nu\varepsilon^2 - \im\varepsilon\pi_0^\perp\big( U(y) + U''(y) (\varepsilon^2-\pa_{y}^2)^{-1}  \big)\,,
\\
\tF_2&:=	 -\eta\varepsilon^2-\im\varepsilon\pi_0^\perp\big(U(y) - (U''(y) + 2U'(y)\pa_{y}) (\varepsilon^2-\pa_{y}^2)^{-1} \big) \,.
\end{aligned}
\]
Given $h\in H_0^{s}$, one has
\[
\begin{aligned}
\|\tF_1[h]\|_{s}& \lesssim \e^2\|h\|_s+\e \|U\|_{C^s}\|h\|_s+
\e \|U''\|_{C^s}\|(\e^2-\pa_{y})^{-1}h\|_s \\
& \lesssim \e^2 \|h\|_s + \e \| U \|_{C^{s + 2}} \| h \|_s \lesssim \e \| h \|_s\,.
\end{aligned}
\]
Similarly, one deduces 
\[
\begin{aligned}
\|\tF_2[h]\|_{s}&\lesssim
\e^2\|h\|_s+\e \|U\|_{C^s}\|h\|_s
+
\e \|U''\|_{C^s}\|(\e^2-\pa_{y})^{-1}h\|_s 
+
\e \|U'\|_{C^s}\|(\e^2-\pa_{y})^{-1}\pa_{y}h\|_s 
\\
& \lesssim \e^2 \|h\|_s + \e \| U \|_{C^{s + 2}} \| h \|_{s} 
\lesssim \e \| h \|_s\,.
\end{aligned}
\]
where we used that the operator $(\e^2-\pa_{y})^{-1}\pa_{y}$
is bounded on $H^{s}_0$ with estimates uniform with respect to $\e$.
From the latter estimates we deduce \eqref{Lpp.eps}.
\end{proof}

\subsection{Invertibility of the unperturbed operator on the non-zero modes}\label{sect.invert.D}


In Lemma \ref{espandi.e.non.mollo}, we determined the unperturbed leading operator $\tD_{\perp\perp}$, whose properties are fundamental in the upcoming sections.
In the following lemma, we analyze its invertibility, which plays a key role in the fixed point argument for the decoupling of the zero and non-zero modes in Section \ref{sec:fixedpoint1}.
\begin{lem}\label{lem:Dperp_inv.resist}
{\bf (The diagonal operator 
$\tD_{\perp\perp}$: invertibility).} 
Let $\nu,\eta>0$.
The matrix  
$\tD_{\perp\perp}: {\bf H}^{s}_{0}\to {\bf H}^{s-2}_{0}$
defined in  \eqref{D.perperp} is invertible, with  inverse given by
\begin{align}
\tD_{\perp\perp}^{-1}& : 
{\bf H}^{s-2}_0
\to  {\bf H}_{0}^{s} \,, 
\\  
\tD_{\perp\perp}^{-1} 
&:= \big( \nu\eta \pa_{y}^4- |{\rm b}_2|^2 \pa_{y}^{2} \big)^{-1}  
\begin{bmatrix}
\eta \pa_{y}^{2} & -{\rm b}_2 \pa_{y} 
\\ 
-{\rm b}_2 \pa_{y}  & \nu \pa_{y}^{2}
\end{bmatrix}   
= \big( {\rm Id}_\perp - \tfrac{|{\rm b}_2|^2}{\nu\eta} \pa_{y}^{-2} \big)^{-1}  
\begin{bmatrix}
\frac{1}{\nu} \pa_{y}^{-2} & -\frac{{\rm b}_2}{\nu\eta} \pa_{y}^{-3} 
\\ -\frac{{\rm b}_2}{\nu\eta} \pa_{y}^{-3} 
& \frac{1}{\eta} \pa_{y}^{-2}
        \end{bmatrix}  \,,
    \end{align}
and satisfying the estimates
    \begin{equation}\label{est.D.perp.resist}
\|\tD_{\perp\perp}\|_{\cB({\bf H}_0^s,{\bf H}_0^{s-2})} \leq C_1(\nu,\eta, {\rm b}_2) \,, 
\qquad 
\|\tD_{\perp\perp}^{-1}\|_{
\cB({\bf H}_0^{s-2},{\bf H}_0^{s})} 
\leq C_2(\nu, \eta, {\rm b}_2)\,,
\end{equation}
for some positive constants 
$C_i(\nu, \eta,{\rm b}_2) > 0$, $i = 1, 2$. 
Moreover one has  
that $C_2(\eta, \nu,{\rm b}_2) \to + \infty$ 
if $\eta \to 0$ or $\nu \to 0$.   
\end{lem}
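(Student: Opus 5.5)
The natural approach is to work entirely on the Fourier side. Since $\tD_{\perp\perp}$ has constant coefficients, it acts diagonally in $j\in\Z\setminus\{0\}$ as multiplication by the $2\times2$ matrix
\[
D(j) := \begin{pmatrix} -\nu j^2 & \im {\rm b}_2 j \\ \im {\rm b}_2 j & -\eta j^2 \end{pmatrix}.
\]
Hence invertibility of $\tD_{\perp\perp}$ reduces to invertibility of each $D(j)$, together with uniform bounds on $|j|^{2}\|D(j)^{-1}\|$. First compute
\[
\det D(j) = \nu\eta j^4 + {\rm b}_2^2 j^2 .
\]
This is the Fourier symbol of $\nu\eta\pa_y^4 - {\rm b}_2^2\pa_y^2$, and it is $\geq \nu\eta j^4>0$ for $j\neq0$ because $\nu,\eta>0$. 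The cofactor formula then gives
\[
D(j)^{-1} = (\det D(j))^{-1}\begin{pmatrix} -\eta j^2 & -\im{\rm b}_2 j\\ -\im {\rm b}_2 j & -\nu j^2\end{pmatrix},
\]
which is exactly the symbol of the first claimed expression $(\nu\eta\pa_y^4-{\rm b}_2^2\pa_y^2)^{-1}\left[\begin{smallmatrix}\eta\pa_y^2 & -{\rm b}_2\pa_y\\ -{\rm b}_2\pa_y & \nu\pa_y^2\end{smallmatrix}\right]$. To obtain the second form, factor $\nu\eta\pa_y^4$ out of the scalar multiplier, writing $\nu\eta\pa_y^4({\rm Id}_\perp - \frac{{\rm b}_2^2}{\nu\eta}\pa_y^{-2})$. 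This is legitimate because $\pa_y^{-2}$ is defined on $H_0^s$ and all these multipliers commute. Distributing $\frac{1}{\nu\eta}\pa_y^{-4}$ over the matrix entries then yields $\frac1\nu\pa_y^{-2}$, $-\frac{{\rm b}_2}{\nu\eta}\pa_y^{-3}$ and $\frac1\eta\pa_y^{-2}$. One checks directly that $D(j)D(j)^{-1}=D(j)^{-1}D(j)=I$, so the formula is a two-sided inverse on ${\bf H}_0$ spaces; zero average is preserved since everything is a Fourier multiplier supported on $j\neq0$.

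For the estimates, the bound on $\tD_{\perp\perp}$ is immediate. For $|j|\geq1$ each entry of $D(j)$ is bounded by $\max(\nu,\eta,|{\rm b}_2|)\,j^2$, so
\[
\|\tD_{\perp\perp}h\|_{s-2}^2 \lesssim \sum_{j\neq0}\la j\ra^{2(s-2)} j^4|\whh(j)|^2 \leq C_1^2\|h\|_s^2 .
\]
For the inverse, use the second form: the scalar factor has symbol $(1+\frac{{\rm b}_2^2}{\nu\eta j^2})^{-1}\in(0,1]$. The matrix entries are bounded by $\frac1\nu j^{-2}$, $\frac1\eta j^{-2}$ and $\frac{|{\rm b}_2|}{\nu\eta}|j|^{-3}\leq \frac{|{\rm b}_2|}{\nu\eta}j^{-2}$. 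Therefore $\|D(j)^{-1}\|\leq C_2 j^{-2}$ with $C_2 = 2\max(\frac1\nu,\frac1\eta,\frac{|{\rm b}_2|}{\nu\eta})$, which gives $\|\tD_{\perp\perp}^{-1}h\|_s\leq C_2\|h\|_{s-2}$. This explicit $C_2$ blows up as $\nu\to0$ or $\eta\to0$. To make the last claim meaningful for the optimal constant rather than just this bound, one can also note a lower bound. Testing on $h=(e^{\im y},0)$, with ${\rm b}_2$ fixed, gives
\[
\|\tD_{\perp\perp}^{-1}\|\gtrsim \frac{\eta}{\nu\eta+{\rm b}_2^2},
\]
which diverges as $\nu\to0$ when ${\rm b}_2=0$, and symmetrically in $\eta$. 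In general one can instead test on high modes $j$, where the factor $(1+{\rm b}_2^2/(\nu\eta j^2))^{-1}\to1$, so the inverse norm is at least of order $1/\nu$ (respectively $1/\eta$).

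No step is a genuine obstacle. The only point needing care is the uniformity in $j$ of the inverse bound, which relies on $\nu\eta j^4$ dominating the determinant; this is exactly where $\nu,\eta>0$ enters. Everything else is routine symbol bookkeeping.
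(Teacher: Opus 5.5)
Your proposal is correct and is essentially the paper's proof: both reduce $\mathtt{D}_{\perp\perp}$ to the Fourier blocks $\mathtt{D}_{\perp\perp}(j)$, invert them explicitly using $\det = \nu\eta j^4 + {\rm b}_2^2 j^2 \geq \nu\eta j^4$, and read off $C_1 \simeq \nu+\eta+|{\rm b}_2|$ and $C_2 \simeq \frac1\nu+\frac1\eta+\frac{|{\rm b}_2|}{\nu\eta}$ from entrywise bounds on the symbols. Your two additions are minor refinements: the high-mode lower bound shows that the optimal $C_2$ is at least $\max\{1/\nu,1/\eta\}$, and your off-diagonal entries $-\mathrm{i}{\rm b}_2 j/\det D(j)$ of $D(j)^{-1}$ correctly match the operator formula, whereas the paper's Fourier display has a harmless sign slip there.
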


\begin{proof}
	The action of  $\tD_{\perp\perp}$ in \eqref{D.perperp} on a vector $u:\T\to\C^2$ with zero average is given by 
\[
u(y) = \sum_{j \in \Z\setminus\{0\}} \whu(j) e^{\im j y } \quad \mapsto \quad
\tD_{\perp\perp} u (y) = 
\sum_{j \in \Z \setminus \{ 0 \}} 
\tD_{\perp\perp}(j)[\widehat u(j)] e^{\im j y}
\]
where 
\begin{equation}\label{eq:D.perp.Fourier}
\tD_{\perp\perp}(j) := 
\begin{pmatrix}
-\nu j^2 & \im{\rm b}_2j 
\\
	\im{\rm b}_2j & -\eta j^2
\end{pmatrix} \,, 
\qquad j\in\Z\setminus\{0\}\,. 
	\end{equation}
Note that $\tD_{\perp\perp}(j)$ is invertible, with inverse  given by
\begin{equation}\label{eq:D.perp.-1.Fourier}
		\tD_{\perp\perp}^{-1}(j) = \Big( 1 + \frac{|{\rm b}_2|^2}{\nu\eta j^2} \Big)^{-1} \begin{pmatrix}
            -\frac{1}{\nu j^2} & \im\frac{{\rm b}_2}{\nu\eta j^3} \\
            \im\frac{{\rm b}_2}{\nu\eta j^3} & -\frac{1}{\eta j^2}\,,
        \end{pmatrix}
	\end{equation}
   and hence $\tD_{\perp\perp}^{-1}[u](y) = \sum_{j \in \Z \setminus \{ 0 \}} \tD_{\perp\perp}(j)^{- 1}[\widehat u(j) ]e^{\im j y}$. In particular, for any $j \in \Z \setminus \{ 0 \}$, one has that 
\[
\| \tD_{\perp\perp}(j) \|_{{\mathcal B}(\C^2)} \leq
C_1(\nu,\eta,{\rm b}_2)|j|^2\,, 
\qquad 
\| \tD_{\perp\perp}(j)^{-1} \|_{{\mathcal B}(\C^2)} 
\leq C_{2}(\nu,\eta,{\rm b}_2) \frac{1}{|j|^2}\,,
\]
where
\begin{equation}
    C_1(\nu,\eta,{\rm b}_2) \simeq \nu + \eta + |{\rm b}_2| \,, \quad  C_2(\nu,\eta,{\rm b}_2) \simeq  \frac{1}{\nu} + \frac{1}{\eta} + \frac{|{\rm b}_2|}{\nu \eta} \,,
\end{equation}
which implies, for $u\in {\bf H}_0^{s+2}$, that 
   $$
   \begin{aligned}
\| \tD_{\perp\perp} u \|_{s} & = 
\Big( \sum_{j \in \Z \setminus \{ 0 \}} |j|^{2 s} 
|\tD_{\perp\perp}(j)[\widehat u(j)]|^2 \Big)^{\frac12} 
\\
& \leq C_1(\nu,\eta,{\rm b}_2) 
\Big( \sum_{j \in \Z \setminus \{ 0 \}} |j|^{2 (s + 2)}
|[\widehat u(j)]|^2 \Big)^{\frac12} 
 \leq
 C_1(\nu,\eta,{\rm b}_2) \| u \|_{s+2}\,, 
\end{aligned}
   $$
   and, for $u\in {\bf H}_0^{s}$,
   $$
   \begin{aligned}
\| \tD_{\perp\perp}^{- 1} u \|_{s+2} &= 
\Big( \sum_{j \in \Z \setminus \{ 0 \}} |j|^{2 (s + 2)} |\tD_{\perp\perp}(j)^{- 1}[\widehat u(j)]|^2 \Big)^{\frac12}  
\\
& \leq C_2(\nu,\eta,{\rm b}_2 ) 
\Big( \sum_{j \in \Z \setminus \{ 0 \}} |j|^{2 s} 
|[\widehat u(j)]|^2 \Big)^{\frac12}  
  \leq C_2(\nu,\eta,{\rm b}_2) 
  \| u \|_{s}\,.
 \end{aligned}
   $$
    This concludes the proof of the estimates in \eqref{est.D.perp.resist}.
\end{proof}

\subsection{Eigenvalues of the unperturbed linearized operator on the non-zero modes}\label{sect.eigen.D}
The goal of the next lemma is to completely characterize the eigenvalues of the operator $\mathtt D_{\bot \bot}$ and provide its diagonalization, which will come in handy when studying the stability of the non-zero modes in Section \ref{sect.nonzero}.

\begin{lem}\label{lem.eigen.D.perp.resist-iii}
{\bf (The diagonal operator $\tD_{\perp\perp}$: eigenvalues).} Let $\eta, \nu > 0$. The following assertions hold:
\\[1mm]
    \noindent $(i)$
	The matrix operator $\tD_{\perp\perp}$ defined by (recall \eqref{D.perperp})
    \begin{equation}
        \tD_{\perp\perp} := \begin{bmatrix}
            \nu \partial_{y}^2 &  {\rm b}_{2} \partial_{y} \\
             {\rm b}_{2} \partial_{y} & \eta \partial_{y}^2 
        \end{bmatrix}
    \end{equation}
    reads as a block-diagonal operator
    \begin{equation}
        \tD_{\perp\perp} = {\rm diag}\big\{ \tD_{\perp\perp}(j) \,: \, j \in \Z\setminus\{0\} \big\} \,,
    \end{equation}
    where, for any $j\in\Z\setminus\{0\}$, each $2\times 2$ block $\tD_{\perp\perp}(j)$ has two eigenvalues $\lambda_{\pm}(j)\in \C$ given by
	\begin{align}\label{eigenval_Dperp.resist-iii}
		\lambda_{\pm}(j) = \lambda_{\pm} (-j)
         &  = -\frac{(\nu+\eta) j^2}{2} \pm \frac12 \sqrt{ (\nu-\eta)^2 j^4- 4 {\rm b}_{2}^2 j^2 } \,.
	\end{align}
Moreover, there exists a constant $\sigma \equiv \sigma(\nu, \eta, {\rm b}_2) > 0$ such that 
${\rm Re}(\lambda_\pm(j)) \leq - \sigma |j|^2$ for any $j \in \Z \setminus \{ 0 \}$;
\\[1mm]
\noindent
$(ii)$ For any $j \in \Z \setminus \{ 0 \}$ there exists an invertible matrix ${\mathcal C}(j) \in {\rm Mat}(2 \times 2)$ such that 
\begin{equation}\label{stimadiCJ}
\sup_{j \in \Z \setminus \{ 0 \}} \| {\mathcal C}(j)^{\pm 1} \|_{\mathcal{B}(\mathbb{C}^2)} \leq  C \equiv C(\eta, \nu, {\rm b}_2)
\end{equation}
and 
\begin{equation}\label{merocontoAlg}
{\mathcal C}(j)^{- 1} {\mathtt D}_{\perp \perp}(j) {\mathcal C}(j) = \Lambda(j) := \begin{pmatrix}
\lambda_+(j) & 0 \\
0 & \lambda_-(j)
\end{pmatrix}, \quad \forall \, j \in \Z \setminus \{ 0 \} \,.
\end{equation}
\\[1mm]
\noindent
$(iii)$ We define the map
$$
u(y) = \sum_{j \in \Z\setminus\{0\}} \whu(j) e^{\im j y }\in \C^2 \quad \mapsto \quad {\mathcal C}^{\pm 1}[u(y)] := \sum_{j \in \Z \setminus \{ 0 \}} {\mathcal C}(j)^{\pm 1}[\widehat u(j)] e^{\im  x j}\,.
$$
Then, for any $s\geq 0$, one has ${\mathcal C}^{\pm 1} \in {\mathcal B}({\bf H}^s_0)$, with estimate $\| {\mathcal C}^{\pm 1}\|_{{\mathcal B}({\bf H}^s_0)}  \leq C$. 
Moreover, it holds that
$$
{\mathcal C}^{- 1} {\mathtt D}_{\perp \perp} {\mathcal C} 
= \Lambda\,,
$$
where 
\begin{equation}\label{def Lambda}
\Lambda [u](x) = \sum_{j \in \Z \setminus \{ 0 \}} \Lambda(j)[\widehat u(j)] e^{\im j y }\,, \quad \Lambda(j) = \begin{pmatrix}
\lambda_+(j) & 0 \\
0 & \lambda_-(j)
\end{pmatrix}\,,
\end{equation}
such that, for any $s \geq 0$, $\Lambda \in 
{\mathcal B}({\bf H}^{s + 2}_0, {\bf H}^{s}_0)$ 
and is invertible, with inverse 
$\Lambda^{- 1} \in {\mathcal B}({\bf H}^{s}_0, 
{\bf H}^{s + 2}_0  )$ 
satisfying the estimate
$\| \Lambda^{- 1} \|_{
{\mathcal B}({\bf H}^s_0, 
{\bf H}^{s + 2}_0 )} \leq \sigma^{- 1}$, with $\sigma>0$ as in item $(i)$. 
\end{lem}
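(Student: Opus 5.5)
The plan is to work one Fourier mode at a time. By Lemma \ref{lem:Dperp_inv.resist}, $\tD_{\perp\perp}$ acts diagonally on Fourier modes through the $2\times2$ blocks $\tD_{\perp\perp}(j)$ in \eqref{eq:D.perp.Fourier}, which gives item $(i)$'s block-diagonal structure at once. Each block has trace $-(\nu+\eta)j^2$ and determinant $\nu\eta j^4 + {\rm b}_2^2 j^2$. The characteristic polynomial $\lambda^2+(\nu+\eta)j^2\lambda+\nu\eta j^4+{\rm b}_2^2j^2$ therefore has the roots \eqref{eigenval_Dperp.resist-iii}, and these are even in $j$.

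For the real-part bound, I split into two cases according to the sign of the discriminant $\Delta_j:=(\nu-\eta)^2j^4-4{\rm b}_2^2j^2$.
\begin{itemize}
\item If $\Delta_j<0$, then ${\rm Re}\,\lambda_\pm(j)=-\frac{(\nu+\eta)}{2}j^2$.
\item If $\Delta_j\ge0$, then $\sqrt{\Delta_j}\le|\nu-\eta|j^2$, so ${\rm Re}\,\lambda_\pm(j)\le -\frac{(\nu+\eta)-|\nu-\eta|}{2}j^2=-\min(\nu,\eta)j^2$.
\end{itemize}
Hence $\sigma:=\min(\nu,\eta)$ works in both cases.

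For $(ii)$, the main obstacle is a \emph{uniform} diagonalization, because the two eigenvalues may coincide or nearly coincide for some $j$. The eigenvalues coincide exactly when $\Delta_j=0$, i.e.\ $|\nu-\eta||j|=2|{\rm b}_2|$. This happens for at most one value of $|j|$, and only if ${\rm b}_2\neq0$. If ${\rm b}_2=0$, the block is already diagonal and I take $\mathcal C(j)={\rm Id}$.

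Assume ${\rm b}_2\ne0$. When $\Delta_j\ne0$, I use the explicit eigenvectors $v_\pm(j)=(\im{\rm b}_2 j,\ \lambda_\pm(j)+\nu j^2)^T$ and set $\mathcal C(j)=[v_+|v_-]$ after normalizing by $|j|^2$ (or, better, normalizing each column to unit length). Its determinant is proportional to $\lambda_+-\lambda_-=\sqrt{\Delta_j}$. As $|j|\to\infty$ the normalized matrix converges to the eigenvector matrix of ${\rm diag}(-\nu,-\eta)$ if $\nu\ne\eta$. If $\nu=\eta$, the block is $-\nu j^2{\rm Id}+\im{\rm b}_2 j\sigma_1$, which is diagonalized by a fixed $j$-independent unitary.

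So only finitely many $j$ remain, and uniformity is trivial over a finite set, provided each individual block is diagonalizable. At the degenerate $j$ with $\Delta_j=0$ and ${\rm b}_2\neq0$, the block is a nontrivial Jordan block, since the off-diagonal entry $\im{\rm b}_2 j$ is nonzero. There I cannot diagonalize. Instead I would set $\Lambda(j)$ to be the Jordan form, or choose $\mathcal C(j)$ giving an upper-triangular form. I would flag that the statement as written needs this genericity caveat, i.e.\ $(\nu-\eta)j\neq 2{\rm b}_2$ for all $j$, or else a triangular $\Lambda(j)$. The bound $\sup_j\|\mathcal C(j)^{\pm1}\|\le C$ then follows from the limit analysis plus finiteness.

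For $(iii)$, I define $\mathcal C^{\pm1}$ and $\Lambda$ as Fourier multipliers. The boundedness $\|\mathcal C^{\pm1}u\|_s\le C\|u\|_s$ is immediate from \eqref{stimadiCJ} and Parseval. The identity $\mathcal C^{-1}\tD_{\perp\perp}\mathcal C=\Lambda$ holds mode by mode from \eqref{merocontoAlg}. Finally, $|\lambda_\pm(j)|\ge|{\rm Re}\,\lambda_\pm(j)|\ge\sigma j^2$, so $\|\Lambda(j)^{-1}\|\le\sigma^{-1}j^{-2}$. Summing with weights $\langle j\rangle^{2(s+2)}$ gives $\|\Lambda^{-1}\|_{\cB(\bH_0^s,\bH_0^{s+2})}\le\sigma^{-1}$.
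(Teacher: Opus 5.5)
Your argument is correct and follows essentially the paper's route. You use the Fourier blocks $\mathtt{D}_{\perp\perp}(j)$, the explicit roots of the characteristic polynomial, and the eigenvector matrix with columns $(\mathrm{i}\,{\rm b}_2 j,\ \lambda_\pm(j)+\nu j^2)^T$, which is the paper's $\mathcal{C}(j)$ up to normalization.

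Where you differ, your version is better:
\begin{itemize}
\item For $(i)$, the bound $\sqrt{\Delta(j)}\le|\nu-\eta|j^2$ gives $\sigma=\min(\nu,\eta)$ at once. This makes the paper's separate estimate of $\lambda_-$ through $\varpi_0$ unnecessary.
\item For $(ii)$, your soft uniformity argument (limit as $|j|\to\infty$ plus finitely many remaining modes) replaces the paper's explicit formula for $\mathcal{C}(j)^{-1}$. Only your unit-column normalization works here: dividing by $|j|^2$ sends a column to $0$ and $\mathcal{C}(j)^{-1}$ blows up.
\item You also treat $\nu=\eta$ in $(ii)$, which the paper does not. Its normalizing factor ${\rm b}_2^2j^2+\big(\frac{(\nu-\eta)j^2}{2}\pm\frac{\sqrt{\Delta(j)}}{2}\big)^2$ vanishes identically when $\nu=\eta$.
\end{itemize}

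Your caveat about the degenerate mode is right, and it exposes a genuine error in the statement that the paper's proof overlooks. The paper's setup explicitly allows $\Delta(j)=0$, since it only has $j_0>\kappa\ge j_0-1$. Its $\mathcal{C}(j)$ has determinant proportional to $\sqrt{\Delta(j)}$, so it is singular exactly at the mode you single out.

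For example, take $\nu=2$, $\eta=1$, ${\rm b}_2=\frac12$ and $j=\pm1$. Then $N:=\mathtt{D}_{\perp\perp}(j)+\frac32{\rm Id}$ is nonzero while $N^2=0$, so no diagonalizing $\mathcal{C}(j)$ exists. Hence item $(ii)$, and item $(iii)$ with a diagonal $\Lambda$, hold only if $2|{\rm b}_2|/|\nu-\eta|\notin\mathbb{N}$.

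The defect does not matter for the later use, which needs only three bounds: ${\rm Re}\langle\Lambda(j)v,v\rangle\le-\sigma j^2|v|^2$, $\|e^{\Lambda(j)t}\|\le e^{-\sigma j^2t}$ and $\|\Lambda(j)^{-1}\|\le\sigma^{-1}j^{-2}$. At the degenerate mode, where necessarily $\nu\neq\eta$, rescale the Jordan basis so that the off-diagonal entry $\delta$ satisfies $|\delta|\le|\nu-\eta|j^2$. Then all three bounds hold with $\sigma=\min(\nu,\eta)$.

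You can also skip diagonalization altogether. The off-diagonal part of $\mathtt{D}_{\perp\perp}(j)$ is skew-Hermitian, so ${\rm Re}\langle\mathtt{D}_{\perp\perp}(j)v,v\rangle=-\nu j^2|v_1|^2-\eta j^2|v_2|^2\le-\min(\nu,\eta)j^2|v|^2$ directly.
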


\begin{proof}
We start with the proof of item $(i)$.
The case ${\rm b}_2=0$ is trivial and it is enough to set $\mathcal{C}(j)={\rm id}$.
Let us analyse the case ${\rm b}_2\neq0$.
By an explicit computation one has that 
the eigenvalues of the $2 \times 2$ matrix 
${\mathtt D}_{\perp\perp}(j)$, $j \in \Z \setminus \{ 0 \}$,
have the form \eqref{eigenval_Dperp.resist-iii}.
One has to analyze the discriminant of 
the eigenvalue equation 
${\rm det}\big( {\mathtt D}_{\perp\perp}(j) 
- \lambda {\rm Id} \big) = 0$ which 
is given by 
\begin{equation}\label{deltadelta}
\Delta(j) :=  (\nu-\eta)^2 j^4 - 4 {\rm b}_{2}^2 j^2 \,.
\end{equation}
Clearly, if $\nu = \eta$, then  one has
\[
\Delta(j) =  - {\rm b}_{2}^2 j^2 < 0\,,
\]
and  the eigenvalues take the form
$$
\lambda_\pm(j) 
= - \nu j^2 \pm \tfrac{\im}{2} |\bar{\rm b}_2| |j|\,, \qquad 
j \in \Z \setminus \{ 0 \}\,,
$$
which easily implies
\[
{\rm Re}(\lambda_{\pm}(j)) = - \nu  |j|^2, \quad \forall \, j \in \Z \setminus \{ 0 \}\,,
\]
and the thesis follows.
We now consider the case $\nu \neq \eta$. Without loss of generality, we assume $\nu > \eta$, as the other case is completely analogous. 
First, we write \eqref{deltadelta} as
$$
\Delta(j) =  (\nu-\eta)^2 j^4  \Big( 1 
- \frac{ 4 {\rm b}_{2}^2 }{(\nu - \eta)^2 j^2} 
\Big)\,,
$$
and hence $\Delta(j) > 0$ if and only if 
$$
|j| > \kappa := \frac{2 |{\rm b}_2|}{|\nu - \eta|}\,.
$$
We define
\begin{equation}\label{j0.tresh}
    j_0 := {\rm min}\big\{ j \in \N : j > \kappa \big\}\,.
\end{equation}
Clearly we have $j_0 > \kappa \geq j_0 - 1$. When $|j| \leq j_0 - 1$, then $\Delta(j) \leq 0$ and the eigenvalues $\lambda_{\pm}(j)$ in \eqref{eigenval_Dperp.resist-iii} take the form
$$
\lambda_{\pm}(j) = - \frac{(\nu + \eta)j^2}{2} \pm \frac{\im}{2} \sqrt{|\Delta(j)|} 
$$
implying that 
$$
{\rm Re}(\lambda_{\pm}(j)) = - \frac{(\nu + \eta)|j|^2}{2}, \quad \forall \, 0 < |j| \leq j_0 - 1\,. 
$$
On the other hand, when
$|j| \geq j_0$,  then $\Delta(j) > 0$ and 
$$
\lambda_\pm(j) = - \frac{(\nu + \eta)|j|^2}{2} \pm \frac12 \sqrt{\Delta(j)} \in \R\,.
$$
Moreover we have the estimates
\begin{align}
 \Delta(j) & = (\nu-\eta)^2 j^4  
 \Big( 1 - \frac{\kappa^2 }{j^2} \Big) 
 \leq (\nu-\eta)^2 j^4  \,,
 \\
 \Delta(j) &= (\nu-\eta)^2 j^4  \Big( 1 
 - \frac{ \kappa^2 }{ j^2} \Big) 
 \geq (\nu-\eta)^2 j^4 (1 - \delta_0^2)\,,
\end{align}
where, recalling the definition of $j_0\in\N$ in \eqref{j0.tresh},
$$
\varpi_0 :=  \frac{ \kappa }{j_0}\in (0,1)\,.
$$
In conclusion, we have
\begin{align}
\lambda_-(j) & \leq - \frac{(\nu + \eta)|j|^2}{2} - \frac{\sqrt{\Delta(j)}}{2}  \leq - \frac{(\nu + \eta)|j|^2}{2} -  \frac{(\nu - \eta)|j|^2}{2} \sqrt{1 - \varpi_0^2}   \\
& \leq - \sqrt{1 - \varpi_0^2} \Big( \frac{(\nu + \eta)|j|^2}{2} +  \frac{(\nu - \eta)|j|^2}{2}\Big) \\
& \leq - \sqrt{1 - \varpi_0^2} \nu |j|^2\,. 
\end{align}
and 
$$
\lambda_+(j) = - \frac{(\nu + \eta)|j|^2}{2} + \frac{\sqrt{\Delta(j)}}{2}  \leq - \frac{(\nu + \eta)|j|^2}{2} +  \frac{(\nu - \eta)|j|^2}{2} = - \eta |j|^2\,. 
$$
Setting
\begin{equation}
    \sigma := \min\big\{\nu,\eta, \sqrt{1-\varpi_0^2}\big\} >0 \,,
\end{equation}
we finally deduce that ${\rm Re}(\lambda_\pm(j)) \leq - \sigma |j|^2$ for any $j \in \Z \setminus \{ 0 \}$ and  the proof of item $(i)$ is concluded.

\noindent
We now prove item $(ii)$. 
Recalling 
\eqref{eigenval_Dperp.resist-iii}, \eqref{deltadelta}, one computes explicitly the (normalized) eigenvector of $\mathtt{D}_{\perp\perp}(j)$
in \eqref{eq:D.perp.Fourier},
obtaining
\[
\mathcal{C}(j):=\begin{pmatrix} 
\dfrac{\im {\rm b}_2 j}{\sqrt{{\rm b}_2^2 j^2 + \Big( \frac{\nu - \eta}{2}j^2 + \frac{\sqrt{\Delta(j)}}{2} \Big)^2}} & 
\dfrac{\im {\rm b}_2 j}{\sqrt{{\rm b}_2^2 j^2 + \Big( \frac{\nu - \eta}{2}j^2 - \frac{\sqrt{\Delta(j)}}{2} \Big)^2}} \\
\dfrac{\frac{\nu - \eta}{2}j^2 + \frac{\sqrt{\Delta(j)}}{2}}{\sqrt{{\rm b}_2^2 j^2 + \Big( \frac{\nu - \eta}{2}j^2 + \frac{\sqrt{\Delta(j)}}{2} \Big)^2}} & 
\dfrac{\frac{\nu - \eta}{2}j^2 - \frac{\sqrt{\Delta(j)}}{2}}{\sqrt{{\rm b}_2^2 j^2 + \Big( \frac{\nu - \eta}{2}j^2 - \frac{\sqrt{\Delta(j)}}{2} \Big)^2}}
\end{pmatrix}\,.
\]
By observing that (still assuming $\nu>\eta$ without loss of generality)
\begin{align}
\sqrt{\Delta(j)}
&\stackrel{\eqref{deltadelta}}{=}
\sqrt{(\nu-\eta)^2j^4-4{\rm b}_2j^2}\sim j^2\,,
\\
\frac{\nu - \eta}{2}j^2 - \frac{\sqrt{\Delta(j)}}{2}&=
\frac{\frac{(\nu-\eta)^2 j^4}{4}-\frac{\Delta(j)}{4}}{\frac{\nu - \eta}{2}j^2 +\frac{\sqrt{\Delta(j)}}{2}}
=\frac{{\rm b}_2^2}{\nu-\eta}
+O\Big(\frac{1}{j^2}\Big)\,, \label{asintotJJ}
\\
\frac{\nu - \eta}{2}j^2 +\frac{\sqrt{\Delta(j)}}{2}&=
(\nu-\eta)j^2
-\frac{{\rm b}_2^2}{\nu-\eta}
+O\Big(\frac{1}{j^2}\Big)\,,
\end{align}
as $|j|\to+\infty$,
one deduces the estimate \eqref{stimadiCJ}
for the matrix $\mathcal{C}(j)$.
Now, it is easy to check that
\[
{\rm det}(\mathcal{C}(j))=\frac{-\im  {\rm b}_2 j \sqrt{\Delta(j)}}{
\sqrt{\Big({\rm b}_2^2 j^2+\Big(\frac{(\nu-\eta)j^2}{2}+\frac{\sqrt{\Delta(j)}}{2}\Big)^2\Big)
\Big({\rm b}_2^2 j^2+\Big(\frac{(\nu-\eta)j^2}{2}-\frac{\sqrt{\Delta(j)}}{2}\Big)^2\Big)}\,.
}
\]
Then, by an explicit computation one gets
\begin{footnotesize}
\begin{equation}
    \cC(j)^{-1}=
\frac{1}{\im {\rm b}_2 j\sqrt{\Delta}}
\begin{pmatrix}
-\Big(\frac{(\nu-\eta)j^2}{2}-\frac{\sqrt{\Delta(j)}}{2}\Big)
\sqrt{\Big(\bar{b}_2^2 j^2+\Big(\frac{(\nu-\eta)j^2}{2}+\frac{\sqrt{\Delta(j)}}{2}\Big)^2\Big)}
& \im {\rm b}_2 j \sqrt{\Big({\rm b}_2^2 j^2+\Big(\frac{(\nu-\eta)j^2}{2}+\frac{\sqrt{\Delta(j)}}{2}\Big)^2\Big)}
\\
\Big(\frac{(\nu-\eta)j^2}{2}+\frac{\sqrt{\Delta(j)}}{2}\Big)
\sqrt{\Big({\rm b}_2^2 j^2+\Big(\frac{(\nu-\eta)j^2}{2}-\frac{\sqrt{\Delta(j)}}{2}\Big)^2\Big)}
&
-\im {\rm b}_2 j \sqrt{\Big({\rm b}_2^2 j^2+\Big(\frac{(\nu-\eta)j^2}{2}-\frac{\sqrt{\Delta(j)}}{2}\Big)^2\Big)}
\end{pmatrix}\,.
\end{equation}
\end{footnotesize}
Using again \eqref{asintotJJ}, 
one deduces the estimate \eqref{stimadiCJ}
for the matrix $\mathcal{C}(j)^{-1}$.
The diagonalization \eqref{merocontoAlg}
and item $(iii)$ simply follow from a straightforward algebraic calculation, together with the estimate \eqref{stimadiCJ}.
\end{proof}

\section{Stability analysis of the zero mode}\label{sect.zeromode}
In this section,
we study the stability properties 
of the zero mode by analyzing the eigenvalues of the operator
$\tL_{\nu,\eta,\epsilon}$ in \eqref{tL.vare}.
The first step
is to conjugate the matrix $\tL_{\nu,\eta,\epsilon}$ 
with a transformation matrix (that will correspond 
to the representation of an invertible map) 
to a normal form matrix operator where the action on the zero
mode  is decoupled from the remaining modes. 
This is done in Section \ref{sec:fixedpoint1}.
Then, in Section \ref{sec:specanalzero1}, we provide a spectral analysis which provides conditions to guarantee the instability 
of the zero mode.

\subsection{Decoupling of zero and non-zero modes}\label{sec:fixedpoint1}

The main result of the section is the following proposition about the existence of a normal form transformation for the operator $\tL_{\nu,\eta,\varepsilon}$.

\begin{prop}\label{prop:fixed_point}
Let $\nu,\eta>0$, $S\geq 0$ and assume $U\in C^{S+2}(\T)$. 
There exists a constant $0<\delta_0=\delta_0(S)\ll1$ such that, 
if 
\begin{equation}\label{smallness.fp}
    \epsilon C_2(\nu,\eta,{\rm b}_2)\leq\delta_0
\end{equation}
with $C_2\equiv C_2(\nu,\eta,{\rm b}_2)>0$ as in Lemma \ref{lem:Dperp_inv.resist},
then there exist maps
\[
T_2\equiv T_2^{\nu,\eta,\epsilon}\in\cB({\bf H}_0^s,\C^2)\,,
\qquad
T_{3}\equiv T_3^{\nu,\eta,\epsilon}\in\cB(\C^2,{\bf H}_0^s)\,,
\]
satisfying
\begin{equation}\label{basicestT}
\|T_2 \|_{\cB(\bH_0^{s},\C^2)}	 
\lesssim \varepsilon^{3}\,,
\qquad 
\|T_3\|_{\cB(\C^2,\bH_0^{s})} \lesssim \varepsilon^{-1} \,,
\end{equation}
such that the following hold:
\\[1mm]
\noindent $\bullet$
 The operator 
\begin{equation}\label{eq:ansatz_conjugation}
	\tT := \begin{bmatrix}
		\rm Id_0 & T_2 \\
		T_3 & \rm Id_\perp
	\end{bmatrix},
\end{equation}
belongs to $\cB({\bf H}^{s},{\bf H}^{s})$ and is invertible, with inverse given by
\begin{equation}\label{tT-1}
		\tT^{-1} =\begin{bmatrix}
			\rm Id_0 & -T_2 \\
			-T_3 & \rm Id_\perp
		\end{bmatrix}\, \begin{bmatrix}
			({\rm Id_0} -T_2T_3)^{-1} & \bf 0^\top \\
			\bf 0 &  ({\rm Id_\perp}-T_3T_2)^{-1}
		\end{bmatrix} \,.
	\end{equation}
\noindent $\bullet$ Recalling  $\tL_{\nu,\eta,\epsilon}$ in \eqref{tL.vare},
we have that
	\begin{equation}\label{eq:conjugationTeorema}
		\tL_{\nu,\eta,\epsilon}^{(0)} := \tT\tL_{\nu,\mu,\varepsilon}\tT^{-1} = \begin{bmatrix}
			\cM_{\nu,\eta,\varepsilon}^{(0)} & 0 \\
			0 & \cN_{\nu,\eta,\varepsilon}^{(0)}
		\end{bmatrix}\, \begin{bmatrix}
			({\rm Id}_0-T_2T_3)^{-1} & \bf 0^\top \\
			\bf 0 & ({\rm Id}_\perp-T_3T_2)^{-1}
		\end{bmatrix}\,,
	\end{equation}
    where
    		\begin{align}
\cM_{\nu,\eta,\varepsilon}^{(0)} &:= \tL_{00} 
- \tL_{0\perp}T_3+T_2\tL_{\perp0}-T_2\tL_{\perp\perp}T_3\,, 
\label{emme0}
\\
\cN_{\nu,\eta,\varepsilon}^{(0)} &:= 
-T_3\tL_{00}T_2 + T_3\tL_{0\perp} 
- \tL_{\perp0}T_2 + \tL_{\perp\perp} \,. \label{enne0}
\end{align}
\noindent $\bullet$ The operator $T_3\in \cB(\C^2,\bH_0^s)$ satisfies 
\begin{equation}\label{eq:estim_order_T3_resist}
    	\| T_3  -\varepsilon^{-1} T_3^{(-1)}\|_{\cB(\C^2,\bH_0^s)} \lesssim 1 \,,
    \end{equation}
    where $T_3^{(-1)}$ is explicitly given by
    \begin{equation}
        T_3^{(-1)} := \tD_{\perp\perp}^{-1} \tL_{\perp 0}^{-1} = \tW_{\perp\perp} \begin{bmatrix}
            -\frac{\im}{\nu}U(y) & -\frac{\im{\rm b}_2}{\nu\eta}(\pa_y^{-1}U)(y) \\
            \frac{\im{\rm b}_2}{\nu\eta}(\pa_y^{-1}U)(y) & \frac{\im}{\eta}U(y)
        \end{bmatrix}, \quad \tW_{\perp\perp}:=\Big( {\rm Id}_\perp - \tfrac{{\rm b}_2^2}{\nu\eta} \pa_{y}^{-2} \Big)^{-1}\,,\label{W.pp}
    \end{equation}
    with estimate $\|T_3^{(-1)}\|_{\cB(\C^2,\bH_0^s)} \lesssim 1$.
\end{prop}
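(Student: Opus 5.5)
The plan is to reduce the conjugation identity \eqref{eq:conjugationTeorema} to two decoupled quadratic (Riccati-type) operator equations for $T_2$ and $T_3$, and to solve each one by a contraction argument that uses the invertibility of $\tD_{\perp\perp}$ from Lemma \ref{lem:Dperp_inv.resist}.

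\textbf{Algebra of the conjugation.} A direct computation gives
\begin{equation}
\begin{bmatrix} {\rm Id}_0 & T_2\\ T_3 & {\rm Id}_\perp\end{bmatrix}\begin{bmatrix} {\rm Id}_0 & -T_2\\ -T_3 & {\rm Id}_\perp\end{bmatrix}=\begin{bmatrix} {\rm Id}_0-T_2T_3 & 0\\ 0 & {\rm Id}_\perp-T_3T_2\end{bmatrix}.
\end{equation}
Hence, as soon as $\|T_2T_3\|,\|T_3T_2\|<1$, the map $\tT$ is invertible by Neumann series, with inverse \eqref{tT-1}. The conjugation formula \eqref{eq:conjugationTeorema} then amounts to computing $\tT\tL_{\nu,\eta,\varepsilon}\begin{bmatrix} {\rm Id}_0 & -T_2\\ -T_3 & {\rm Id}_\perp\end{bmatrix}$. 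Its diagonal blocks are exactly $\cM^{(0)}_{\nu,\eta,\varepsilon}$ in \eqref{emme0} and $\cN^{(0)}_{\nu,\eta,\varepsilon}$ in \eqref{enne0}. Requiring its off-diagonal blocks to vanish gives the two equations
\begin{align}
&\tL_{\perp\perp}T_3 = \tL_{\perp0}+T_3\tL_{00}-T_3\tL_{0\perp}T_3\,,\\
&T_2\tL_{\perp\perp} = -\tL_{0\perp}+\tL_{00}T_2+T_2\tL_{\perp0}T_2\,.
\end{align}
These equations are decoupled: the first involves only $T_3$ and the second only $T_2$.

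\textbf{Solving for $T_3$.} Split $\tL_{\perp\perp}=\tD_{\perp\perp}+\tR_{\perp\perp}$ as in \eqref{Lpp} and rewrite the first equation as the fixed point problem
\begin{equation}
T_3=\Phi(T_3):=\tD_{\perp\perp}^{-1}\big(\tL_{\perp0}+T_3\tL_{00}-T_3\tL_{0\perp}T_3-\tR_{\perp\perp}T_3\big).
\end{equation}
Set it up on the ball $\{\|T_3\|_{\cB(\C^2,\bH_0^s)}\le K\varepsilon^{-1}\}$. Since $\tD_{\perp\perp}^{-1}$ gains two derivatives (Lemma \ref{lem:Dperp_inv.resist}) and $U\in C^{S+2}$, the estimates \eqref{esti.easy}, \eqref{Lpp.eps} give the following sizes:
\begin{itemize}
\item $\tD_{\perp\perp}^{-1}\tL_{\perp0}=O(C_2\varepsilon^{-1})$;
\item $T_3\tL_{00}=O(\varepsilon^{-1}\varepsilon)=O(1)$;
\item $T_3\tL_{0\perp}T_3=O(\varepsilon^{-2}\varepsilon^{3})=O(\varepsilon)$;
\item $\tR_{\perp\perp}T_3=O(\varepsilon\,\varepsilon^{-1})=O(1)$.
\end{itemize}
The Lipschitz constant of $\Phi$ is therefore $\lesssim C_2(\|\tL_{00}\|+\|\tR_{\perp\perp}\|+2K\varepsilon^{-1}\|\tL_{0\perp}\|)\lesssim C_2\varepsilon$. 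Under \eqref{smallness.fp}, $\Phi$ is then a contraction of the ball into itself.

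The fixed point satisfies $T_3-\tD_{\perp\perp}^{-1}\tL_{\perp0}=O(1)$. Using \eqref{Lp0.eps}, the $\varepsilon\tL^{(1)}_{\perp0}$ part of $\tL_{\perp0}$ is also $O(\varepsilon)$, which yields \eqref{eq:estim_order_T3_resist} with $T_3^{(-1)}=\tD_{\perp\perp}^{-1}\tL^{(-1)}_{\perp0}$. Its explicit form \eqref{W.pp} follows from the formula for $\tD_{\perp\perp}^{-1}$ in Lemma \ref{lem:Dperp_inv.resist} applied to ${\rm diag}(-\im U'',\im U'')$, together with the identities $\pa_y^{-2}U''=U$ and $\pa_y^{-3}U''=\pa_y^{-1}U$, valid since $U$ has zero average.

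\textbf{Solving for $T_2$, and the main difficulty.} Multiply the second equation on the right by $\tD_{\perp\perp}^{-1}$:
\begin{equation}
T_2=\Psi(T_2):=\big(-\tL_{0\perp}+\tL_{00}T_2+T_2\tL_{\perp0}T_2-T_2\tR_{\perp\perp}\big)\tD_{\perp\perp}^{-1}.
\end{equation}
Work on the ball $\{\|T_2\|_{\cB(\bH^s_0,\C^2)}\le K\varepsilon^3\}$. Here $\tD_{\perp\perp}^{-1}$ maps $\bH^s_0$ into $\bH^{s+2}_0\subset\bH^s_0$, and $\tL_{0\perp}$ is bounded already from $L^2$. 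The terms then have sizes
\begin{itemize}
\item $\tL_{0\perp}=O(\varepsilon^3)$;
\item $\tL_{00}T_2=O(\varepsilon^4)$;
\item $T_2\tL_{\perp0}T_2=O(\varepsilon^5)$;
\item $T_2\tR_{\perp\perp}=O(\varepsilon^4)$.
\end{itemize}
The Lipschitz constant is again $\lesssim C_2\varepsilon$, so $\Psi$ is a contraction and \eqref{basicestT} follows. Finally, $\|T_2T_3\|,\|T_3T_2\|\lesssim\varepsilon^2$, which justifies \eqref{tT-1}; the boundedness of $\tT$ on $\bH^s$ is immediate from \eqref{basicestT}.

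I expect the main obstacle to be the $T_3$ equation. It is non-perturbative: $T_3$ has size $\varepsilon^{-1}$, so the quadratic term $T_3\tL_{0\perp}T_3$ is harmless only because $\tL_{0\perp}$ is $O(\varepsilon^3)$. Moreover, every estimate must close in $\cB(\C^2,\bH_0^s)$ with the correct derivative count, since $\tL_{\perp0}$ involves $U''$ and is brought back to regularity $s$ only through the gain of $\tD_{\perp\perp}^{-1}$. Tracking the dependence on $C_2(\nu,\eta,{\rm b}_2)$ is exactly what produces the smallness condition \eqref{smallness.fp}.
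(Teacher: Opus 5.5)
Your proposal is correct. It uses the same ingredients as the paper: the block computation of Lemma \ref{lem:conjugation}, the two decoupled Riccati equations $\cC_{\nu,\eta,\varepsilon}=0$ and $\cB_{\nu,\eta,\varepsilon}=0$, the invertibility of $\tD_{\perp\perp}$ (Lemma \ref{lem:Dperp_inv.resist}), and the size bounds of Lemma \ref{espandi.e.non.mollo}. Where you differ is in how you set up the fixed-point step. The paper first inserts the ansatz $T_3=\varepsilon^{-1}T_3^{(-1)}+T_3^{(0)}$ and removes the $O(\varepsilon^{-1})$ part explicitly with $T_3^{(-1)}=\tD_{\perp\perp}^{-1}\tL^{(-1)}_{\perp0}$. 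It then inverts, by a Neumann series, the linearized operator $\Psi=-\tD_{\perp\perp}+\Psi_1$, which contains the cross terms from linearizing $T_3\tL_{0\perp}T_3$ at $\varepsilon^{-1}T_3^{(-1)}$. Finally it applies the abstract quadratic Lemma \ref{lem:fixed_point} to the $O(1)$ unknown $T_3^{(0)}$, whose quadratic part has constant $O(\varepsilon^3)$. For $T_2$ it analogously inverts $\Phi=\Phi_0+\Phi_1$ and applies Lemma \ref{lem:fixed_point} with $c\sim\varepsilon^{-1}$. You instead run a single Banach contraction for the whole $T_3$ on a ball of radius $K\varepsilon^{-1}$, with $K$ of the size of $C_2\|\tL^{(-1)}_{\perp0}\|$. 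The linear and quadratic terms are absorbed together into a Lipschitz constant $O(C_2\varepsilon)$, and you recover \eqref{eq:estim_order_T3_resist} a posteriori from $T_3-\tD_{\perp\perp}^{-1}\tL_{\perp0}=O(1)$; you do the same for $T_2$ after right-composing with $\tD_{\perp\perp}^{-1}$.

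Your route buys several things. It is shorter and shows directly why the smallness condition has the form $C_2\varepsilon\ll1$. Writing $T_3=\tD^{-1}_{\perp\perp}(\cdots)$ also automatically places the range of $T_3$ in $\bH^{s+2}_0$, so that $\tL_{\perp\perp}T_3$ is meaningful. The paper's route instead produces the leading term and the bounded correction as separate objects by construction, which is the form used afterwards in Proposition \ref{prop.M2.resist}. In both arguments the admissible $\varepsilon$ also depends on $\|U\|_{C^{S+2}}$, through the constants of Lemma \ref{espandi.e.non.mollo}, and not only on $C_2$ and $S$ as \eqref{smallness.fp} suggests.
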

The proof of the above result  involves several arguments 
which are analyzed below.
First of all, we show that, under suitable smallness assumptions,
an operator of the form
\eqref{eq:ansatz_conjugation}
is invertible.
\begin{lem}\label{lem:T_invertible}
	Let $\tT$ be as in \eqref{eq:ansatz_conjugation} and assume that
	\begin{equation}\label{small.for.tT-1}
		\|T_2T_3\|_{\cB(\C^2,\C^2)}\,,\; \|T_3T_2\|_{\cB({\bf H}_0^{s},{\bf H}_0^{s})} \leq \frac12 \,.
	\end{equation}
	Then, the matrix $\tT$ is invertible, with inverse as in \eqref{tT-1}.
	
\end{lem}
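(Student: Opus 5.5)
The plan is to verify directly that the proposed product in \eqref{tT-1} is a two-sided inverse of $\tT$. The only analytic input is a Neumann series argument, which makes the diagonal blocks invertible.

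First, by \eqref{small.for.tT-1}, the operators ${\rm Id}_0 - T_2T_3$ on $\C^2$ and ${\rm Id}_\perp - T_3T_2$ on $\bH_0^s$ are invertible through their Neumann series. Their inverses satisfy $\|({\rm Id}_0-T_2T_3)^{-1}\|\le 2$ and $\|({\rm Id}_\perp-T_3T_2)^{-1}\|\le 2$. Hence the block-diagonal matrix ${\rm diag}\{({\rm Id}_0-T_2T_3)^{-1},({\rm Id}_\perp-T_3T_2)^{-1}\}$ is a bounded operator on $\C^2\oplus\bH_0^s\simeq \bH^s$. Set $\tS:=\begin{bmatrix}{\rm Id}_0 & -T_2\\ -T_3 & {\rm Id}_\perp\end{bmatrix}$. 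A direct block multiplication gives
\begin{equation}
\tT\,\tS = \begin{bmatrix}{\rm Id}_0 - T_2T_3 & -T_2+T_2\\ T_3-T_3 & {\rm Id}_\perp - T_3T_2\end{bmatrix} = \begin{bmatrix}{\rm Id}_0 - T_2T_3 & 0\\ 0 & {\rm Id}_\perp - T_3T_2\end{bmatrix},
\end{equation}
and in the same way $\tS\,\tT = {\rm diag}\{{\rm Id}_0 - T_2T_3,\ {\rm Id}_\perp - T_3T_2\}$.

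Write $\tD:={\rm diag}\{{\rm Id}_0 - T_2T_3,{\rm Id}_\perp - T_3T_2\}$. Since $\tT\tS=\tD$ with $\tD$ invertible, $\tT$ has the right inverse $\tS\tD^{-1}$, which is exactly the expression in \eqref{tT-1}. The identity $\tS\tT=\tD$ shows that $\tD^{-1}\tS$ is a left inverse of $\tT$. A map with both a left and a right inverse is invertible, and the two inverses agree. Therefore $\tT^{-1}=\tS\tD^{-1}$, which is \eqref{tT-1}.

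It remains to check boundedness. Note that $\tS$ commutes with $\tD$: for instance $T_2({\rm Id}_\perp-T_3T_2)=({\rm Id}_0-T_2T_3)T_2$. This is consistent with the two expressions for the inverse, but we do not need it. $\tT$ and $\tS$ lie in $\cB(\bH^s)$ because $T_2$ and $T_3$ are bounded operators between $\bH_0^s$ and $\C^2$. No step here is a genuine obstacle. The only point needing care is to confirm that the off-diagonal entries cancel exactly in both products, so that one inverse formula works on both sides.
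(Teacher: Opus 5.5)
Your proof is correct and follows the paper's argument: the paper also shows that both $\tT\mathtt{S}$ and $\mathtt{S}\tT$ (with $\mathtt{S}$ your matrix) equal the block-diagonal operator with entries ${\rm Id}_0-T_2T_3$ and ${\rm Id}_\perp-T_3T_2$, which is invertible under the smallness assumption, and reads off the inverse formula from this. You additionally spell out the Neumann-series bound and the left/right-inverse step, which the paper leaves implicit.
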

\begin{proof}
    We observe that
    \begin{equation}\label{prove.inverse}
     \begin{bmatrix}
			\rm Id_0 & T_2 \\
			T_3 & \rm Id_\perp
		\end{bmatrix}\, \begin{bmatrix}
			\rm Id_0 & -T_2 \\
			-T_3 & \rm Id_\perp
		\end{bmatrix} = \begin{bmatrix}
			\rm Id_0 - T_2T_3 & 0 \\
			0 & \rm Id_\perp - T_3 T_2
		\end{bmatrix} = \begin{bmatrix}
			\rm Id_0 & -T_2 \\
			-T_3 & \rm Id_\perp
		\end{bmatrix}\,\begin{bmatrix}
			\rm Id_0 & T_2 \\
			T_3 & \rm Id_\perp
		\end{bmatrix}\,.
    \end{equation}
    By \eqref{small.for.tT-1}, the diagonal matrix in the middle of \eqref{prove.inverse} is invertible and we deduce \eqref{tT-1}.
\end{proof}
Once the invertibility of $\tT$ is guaranteed, we conjugate 
$\tL_{\nu,\mu,\epsilon}$  to transform it into a more ``{easy to handle}'' form.
\begin{lem}\label{lem:conjugation}
	Let $\tT$ be as in \eqref{eq:ansatz_conjugation} and assume the smallness condition
    \eqref{small.for.tT-1}.
	Then, the matrix $\tL_{\nu,\mu,\epsilon}$ in \eqref{tL.vare} is conjugated by 
    $\tT$ in \eqref{eq:ansatz_conjugation} into the matrix
	\begin{equation}\label{eq:conjugation}
		\tL_{\nu,\eta,\varepsilon}^{(0)} := \tT\tL_{\nu,\mu,\varepsilon}\tT^{-1} = \begin{bmatrix}
			\cM_{\nu,\eta,\varepsilon}^{(0)} & \cB_{\nu,\eta,\varepsilon} \\
			\cC_{\nu,\eta,\varepsilon} & \cN_{\nu,\eta,\varepsilon}^{(0)}
		\end{bmatrix}\, \begin{bmatrix}
			({\rm Id}_0-T_2T_3)^{-1} & \bf 0^\top \\
			\bf 0 & ({\rm Id}_\perp-T_3T_2)^{-1}
		\end{bmatrix}
	\end{equation}
	where the operators 
    $\cM_{\nu,\eta,\varepsilon}^{(0)}\colon \C^2\rightarrow\C^2$
    and $\cN_{\nu,\eta,\varepsilon}^{(0)}\colon H_0^s\rightarrow H_0^s$ are given in 
    \eqref{emme0}-\eqref{enne0} and where 
	\begin{equation}
	 \cB_{\nu,\eta,\varepsilon}\colon H_0^s\rightarrow\C^2\,, 
     \qquad \cC_{\nu,\eta,\varepsilon}\colon \C^2\rightarrow H_0^s, 
	\end{equation}
	are given by
    \begin{equation}\label{gerry1}
		\begin{aligned}
\cB_{\nu,\eta,\varepsilon} &:= -\tL_{00}T_2 
+ \tL_{0\perp} - T_2\tL_{\perp0}T_2 
+ T_2\tL_{\perp\perp}\,, 
\\
\cC_{\nu,\eta,\varepsilon} &:= 
T_3\tL_{00} - T_3\tL_{0\perp}T_3 
+ \tL_{\perp0} - \tL_{\perp\perp}T_3\,, 
\end{aligned}
\end{equation}
\end{lem}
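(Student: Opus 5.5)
The identity is purely algebraic, so the plan is to compute the product $\tT\tL_{\nu,\eta,\varepsilon}\tT^{-1}$ block by block, using the factorized form of $\tT^{-1}$ from Lemma \ref{lem:T_invertible}. Under \eqref{small.for.tT-1}, that lemma gives
\begin{equation}
\tT^{-1} = \begin{bmatrix} {\rm Id}_0 & -T_2 \\ -T_3 & {\rm Id}_\perp \end{bmatrix} \begin{bmatrix} ({\rm Id}_0-T_2T_3)^{-1} & 0 \\ 0 & ({\rm Id}_\perp-T_3T_2)^{-1} \end{bmatrix}.
\end{equation}
Hence
\begin{equation}
\tT\tL_{\nu,\eta,\varepsilon}\tT^{-1} = \Big( \tT\,\tL_{\nu,\eta,\varepsilon} \begin{bmatrix} {\rm Id}_0 & -T_2 \\ -T_3 & {\rm Id}_\perp \end{bmatrix} \Big) \begin{bmatrix} ({\rm Id}_0-T_2T_3)^{-1} & 0 \\ 0 & ({\rm Id}_\perp-T_3T_2)^{-1} \end{bmatrix},
\end{equation}
and it suffices to show that the bracketed product equals the block matrix with entries $\cM^{(0)}_{\nu,\eta,\varepsilon}$, $\cB_{\nu,\eta,\varepsilon}$, $\cC_{\nu,\eta,\varepsilon}$, $\cN^{(0)}_{\nu,\eta,\varepsilon}$.

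I will compute this product in two stages. First, multiplying $\tL_{\nu,\eta,\varepsilon}$ on the right by $\begin{bmatrix} {\rm Id}_0 & -T_2 \\ -T_3 & {\rm Id}_\perp\end{bmatrix}$ gives the block matrix
\begin{equation}
\begin{bmatrix} \tL_{00}-\tL_{0\perp}T_3 & -\tL_{00}T_2+\tL_{0\perp} \\ \tL_{\perp0}-\tL_{\perp\perp}T_3 & -\tL_{\perp0}T_2+\tL_{\perp\perp}\end{bmatrix}.
\end{equation}
Then I multiply on the left by $\tT$, whose rows are $[{\rm Id}_0,\ T_2]$ and $[T_3,\ {\rm Id}_\perp]$.

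The $(1,1)$ entry is $\tL_{00}-\tL_{0\perp}T_3+T_2\tL_{\perp0}-T_2\tL_{\perp\perp}T_3$, which is \eqref{emme0}.

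The $(1,2)$ entry is $-\tL_{00}T_2+\tL_{0\perp}-T_2\tL_{\perp0}T_2+T_2\tL_{\perp\perp}$, which is $\cB_{\nu,\eta,\varepsilon}$.

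The $(2,1)$ entry is $T_3\tL_{00}-T_3\tL_{0\perp}T_3+\tL_{\perp0}-\tL_{\perp\perp}T_3$, which is $\cC_{\nu,\eta,\varepsilon}$.

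The $(2,2)$ entry is $-T_3\tL_{00}T_2+T_3\tL_{0\perp}-\tL_{\perp0}T_2+\tL_{\perp\perp}$, which is \eqref{enne0}.

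Each of these matches \eqref{gerry1} respectively.

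The only point needing care, and the nearest thing to an obstacle, is making sense of the compositions as maps between the right spaces. $\tL_{\perp\perp}$ is unbounded, mapping ${\bf H}^{s+2}_0\to{\bf H}^s_0$, so I would read the identity on the natural domain. Alternatively, one can note that $T_3$ and $T_2$ are given bounded maps, and all composites are meaningful as operators from $\C^2\oplus{\bf H}_0^{s+2}$. Beyond this, the argument is bookkeeping of non-commuting products, keeping the order of factors throughout; no estimates are needed except \eqref{small.for.tT-1}, which is used only through Lemma \ref{lem:T_invertible}.
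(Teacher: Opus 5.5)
Your proposal is correct and is exactly the direct block computation the paper invokes. You factor $\tT^{-1}$ via Lemma \ref{lem:T_invertible}, multiply $\tL_{\nu,\eta,\varepsilon}$ on the right by $\begin{bmatrix} {\rm Id}_0 & -T_2 \\ -T_3 & {\rm Id}_\perp \end{bmatrix}$ and on the left by $\tT$, and read off the four entries, all of which you computed correctly: the diagonal ones are \eqref{emme0}--\eqref{enne0} and the off-diagonal ones are \eqref{gerry1}.
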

\begin{proof}
    It follows by a direct computation with \eqref{tL.vare}, \eqref{eq:ansatz_conjugation} and Lemma \ref{lem:T_invertible}.
 \end{proof}
In order to conclude the proof of Proposition \ref{prop:fixed_point}
we must find operators $T_2, T_3$ that 
annihilate the entrances $\cB_{\nu,\eta,\epsilon}$ and $\cC_{\nu,\eta,\epsilon}$ in \eqref{gerry1} respectively.
This will be done via a classical Banach fixed point argument, 
which we state here for convenience.
\begin{lem}\label{lem:fixed_point}
	Let $(X,\|\,\cdot\, \|)$ be a Banach space and $Q : X\times X \to  X$ be a bilinear mapping satisfying
	\begin{equation}
		\| Q(u,v) \| \leq c\| u \|  \| v\|\,, \quad \forall \,u,v\in X \,,
	\end{equation}
	for some $c>0$. Let $u_0\in X$ be such that $\| u_0 \| \leq {\rm min}\{ \frac{1}{8c}, \frac{1}{\sqrt{24 c}}\}$. Then, there exists a solution $u\in X$ to the fixed-point problem
	\begin{equation}
		u=F(u):=u_0+Q(u,u)
	\end{equation}
	satisfying
	\begin{equation}
		\| u - u_0\| \leq 3c \| u_0\|^2.
	\end{equation}
\end{lem}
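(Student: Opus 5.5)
I will apply the Banach contraction principle on the closed ball $B := \{ u \in X : \| u - u_0 \| \leq 3c\|u_0\|^2 \}$, equipped with the norm of $X$. Since $X$ is complete and $B$ is closed, it suffices to show that $F(B)\subseteq B$ and that $F$ is a contraction on $B$.

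\emph{Step 1 (a priori size on $B$).} For $u\in B$, the assumption $\|u_0\|\le \frac{1}{8c}$ gives $3c\|u_0\|^2 \le \frac38 \|u_0\|$, hence
\begin{equation}
\|u\| \le \|u_0\| + 3c\|u_0\|^2 \le \tfrac{11}{8}\|u_0\| \le 2\|u_0\| \,.
\end{equation}

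\emph{Step 2 (invariance).} For $u\in B$, the bilinear bound and Step 1 give
\begin{equation}
\|F(u)-u_0\| = \|Q(u,u)\| \le c\|u\|^2 \le c\big(\tfrac{11}{8}\big)^2\|u_0\|^2 \le 3c\|u_0\|^2 \,,
\end{equation}
since $(11/8)^2 = 121/64 < 3$. Hence $F(B)\subseteq B$.

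\emph{Step 3 (contraction).} For $u,v\in B$, bilinearity gives the polarization identity $Q(u,u)-Q(v,v) = Q(u-v,u) + Q(v,u-v)$. Therefore
\begin{equation}
\|F(u)-F(v)\| \le c\big(\|u\|+\|v\|\big)\|u-v\| \le 4c\|u_0\|\,\|u-v\| \le \tfrac12 \|u-v\| \,,
\end{equation}
again using $\|u_0\|\le \frac{1}{8c}$. So $F$ is a contraction on $B$ with constant $\frac12$.

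The Banach fixed point theorem then yields a unique $u\in B$ with $u = F(u)$, and membership in $B$ is exactly the claimed estimate $\|u-u_0\|\le 3c\|u_0\|^2$. The only point requiring care is fitting the numerical constants in Steps 2 and 3. The alternative hypothesis $\|u_0\|\le (24c)^{-1/2}$ is not needed for this argument, since it is implied by the $\frac{1}{8c}$ bound whenever $c$ is small; the minimum of the two thresholds therefore remains sufficient.
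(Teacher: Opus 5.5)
Your proof is correct and is essentially the paper's argument: the paper writes $u=u_0+v$ and shows that $v\mapsto Q(u_0,u_0)+Q(u_0,v)+Q(v,u_0)+Q(v,v)$ is a contraction on the ball of radius $3c\|u_0\|^2$ about $0$, which is your ball $B$ about $u_0$ after translation. Your bookkeeping, which uses only $\|u_0\|\le \frac{1}{8c}$, is actually cleaner than the paper's, whose appeal to $(24c)^{-1/2}$ traces back to a missing factor $c$ in its Lipschitz bound; one small slip in your closing aside is that $\frac{1}{8c}\le (24c)^{-1/2}$ holds only for $c\ge \frac38$ (large, not small, $c$), but this is harmless since the minimum of the two thresholds is always at most $\frac{1}{8c}$, which is all you use.
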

\begin{proof}
We write $u = u_0 + v$. The fixed point equation for $v$ becomes 
\begin{equation}
    v = \Xi(v)\,, \quad \Xi(v) := Q(u_0,u_0) + Q(u_0,v) + Q(v,u_0) + Q(v,v) \,.
\end{equation}
We prove that  $ \Xi : B_X(0, R) \to B_X(0, R)$
is a contraction, where
$$
B_X(0, R) := \big\{ u \in X : \| u \| \leq R \big\} \,, \quad \textnormal{with} \quad R := 3 c \| u_0 \|^2  \,.
$$
Indeed, for $v \in B_X(0, R)$, one has 
\begin{align}
\| \Xi(v) \| & \leq c \| u_0 \|^2 + 2 c \| u_0 \| \| v \| + c \| v \|^2 \leq c \| u_0 \|^2 + 2 c R \| u_0 \| + c R^2 \\
& \leq \tfrac13 R + \tfrac13 R + 3 c^2 \| u_0 \|^2 R \leq R \,,
\end{align}
by the definition of $R$ and using that $\| u_0 \| < \frac{1}{6 c}< \frac{1}{3c}$. Moreover, by the bilinearity of $Q$, for any $\| v_1 \|, \| v_2 \| \leq R$, one has that 
$$
\begin{aligned}
\| \Xi(v_1) - \Xi(v_2) & \| \leq 2 c \| u_0 \| \| v_1 - v_2 \| + (\| v_1 \| + \| v_2 \|) \| v_1 - v_2 \|  \\
& \leq (2 c \| u_0 \| + 2 R) \| v_1 - v_2 \| \leq (2 c \| u_0 \| + 6 c \| u_0 \|^2 ) \| v_1 - v_2 \| \\
&  \leq \tfrac12 \| v_1 - v_2 \| \,.
\end{aligned}
$$
since, by the hypotheses on $u_0$, we have $2 c \| u_0 \| < \frac14$ and $6 c \| u_0 \|^2 < \frac14$.
Hence $\Xi : B_X(0, R) \to B_X (0, R)$ is a contraction and the claim is proved. 
\end{proof}
\noindent
In order to apply Lemma \ref{lem:fixed_point} in the upcoming proof of Proposition \ref{prop:fixed_point},
the key point is the invertibility of the 
linear operators of the form 
\begin{align}
    \Phi(T_2) &:= -\tL_{00}T_2+T_2\tL_{\perp\perp} = -\tL_{00}T_2 + T_2\tR_{\perp\perp} + T_2\tD_{\perp\perp}, \\
    \Psi(T_3) &:= -  \tD_{\perp\perp}T_3 - \tR_{\perp\perp} T_3  +  T_3 \tL_{00} + \e^{- 1}\big( T_3^{(- 1)}  \tL_{0\perp} T_3  + T_3 \tL_{0\perp} T_3^{(-1)} \big) \\
   & \text{for a given }\quad   T_3^{(- 1)} \in {\mathcal B}(\C^2, {\bf H}^s_0), \quad \| T_3^{(- 1)} \|_{{\mathcal B}(\C^2, {\bf H}^s_0)} \lesssim 1\,. 
\end{align}
where (recall \eqref{Lpp}, \eqref{D.perperp})
\begin{equation}
	\tD_{\perp\perp} = \begin{bmatrix}
		\nu\pa_{y}^2 & {\rm b}_2\pa_{y} \\
		{\rm b}_2\pa_{y} &\eta\pa_y^2
	\end{bmatrix}\quad\textnormal{ and }\quad \|\tR_{\perp\perp}\|_{ \cB( {\bf H}_0^s)} \lesssim \varepsilon\,.
\end{equation}
These operators can be rewritten as
\begin{equation}
    \begin{aligned}\label{eq:def_linear.operators_fixed.point}
\Phi(T_2) &= \Phi_0(T_2) + \Phi_1(T_2)\,, 
\qquad \Phi_0(T_2) := T_2\tD_{\perp\perp} \\
\;\;\;\text{and}\;\;\; 
\Phi_1(T_2) & := -\tL_{00}T_2+T_2\tR_{\perp\perp}\,, 
\\
\Psi(T_3) &= \Psi_0(T_3) + \Psi_1(T_3)\,, 
\qquad 
\Psi_0(T_3):=-\tD_{\perp\perp}T_3 \;\;\; \\
\text{and}\;\;\;  
\Psi_1(T_3)& := - \tR_{\perp\perp} T_3  +  T_3 \tL_{00} + \e^{- 1}\big( T_3^{(- 1)}  \tL_{0\perp} T_3  + T_3 \tL_{0\perp} T_3^{(- 1)} \big)\,.
\end{aligned}
\end{equation}

\begin{lem}\label{lem:invert_Phi_0_Psi_0}
Let $\nu,\eta > 0$. The following hold:
\\[1mm]
\noindent $(i)$
The operator $\Phi_0$ 
defined in \eqref{eq:def_linear.operators_fixed.point} 
is invertible, with inverse given by
    \begin{equation}
\forall\,
\wtT_2\in\cB({\bf H}_0^{s-2},\C^2)\,, 
\qquad \Phi_0^{-1}(\wtT_2)
= \wtT_2 \tD_{\perp\perp}^{-1}
\in\cB({\bf H}_0^{s},\C^2)\,,
    \end{equation}
satisfying 
\begin{equation}
\|\Phi_0\|_{\cB({{\bf H}_0^{s-2}},\C^2) 
\rightarrow \cB({\bf H}_0^s,\C^2)} \leq C_1(\nu,\eta,{\rm b}_2)\,, 
\qquad 
\|\Phi_0^{-1}\|_{\cB({\bf H}_0^{s},\C^2) \rightarrow \cB({{\bf H}_0^{s - 2}},\C^2)}  \leq C_2(\nu,\eta,{\rm b}_2)
	\end{equation}
    where the constants $C_i(\nu,\eta,{\rm b}_2)>0$, $i=1,2$, are as in Lemma \ref{lem:Dperp_inv.resist}. Moreover one has that $\Phi_0^{-1}$
    is bounded form $\mathcal{B}({\bf H}_{0}^{s};\C^2)$ into itself with similar estimates as above.
\\[1mm]
\noindent $(ii)$ The operator $\Psi_0$ 
defined in \eqref{eq:def_linear.operators_fixed.point} 
is invertible, with inverse given by:
    \begin{equation}
\forall \,
\wtT_3\in\cB(\C^2,\,{\bf H}_0^{s-2})\,, 
\qquad \Psi_0^{-1}(\wtT_3)
=-\tD_{\perp\perp}^{-1}\wtT_3 
\in\cB(\C^2,{\bf H}_0^{s})\,,
    \end{equation}
satisfying
    \begin{equation}
\|\Psi_0\|_{\cB(\C^2,{\bf H}_0^{s}) 
\rightarrow \cB(\C^2, {\bf H}_0^{s-2})} 
\leq C_1(\nu,\eta,{\rm b}_2)\,, 
\qquad \|\Psi_0^{-1}\|_{\cB(\C^2,{\bf H}_0^{s-2}) 
\rightarrow \cB(\C^2, {\bf H}_0^{s})} \leq C_2(\nu,\eta,{\rm b}_2) \,,
    \end{equation}
     where the constants $C_i(\nu,\eta,{\rm b}_2)>0$, $i=1,2$, are as in Lemma \ref{lem:Dperp_inv.resist}. Moreover one has that $\Psi_0^{-1}$
    is bounded form $\mathcal{B}(\C^2;{\bf H}_{0}^{s})$ into itself with similar estimates as above.
\end{lem}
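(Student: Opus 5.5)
The argument is a direct consequence of Lemma \ref{lem:Dperp_inv.resist}, since $\Phi_0$ and $\Psi_0$ are just right and left composition with the Fourier multiplier $\tD_{\perp\perp}$. For item $(i)$, I will first note that for $T_2\in\cB({\bf H}_0^{s},\C^2)$, viewed as acting on the smaller space ${\bf H}_0^{s+2}$, the composition $\Phi_0(T_2)=T_2\tD_{\perp\perp}$ is well defined. The operator norm $\|T_2\tD_{\perp\perp}\|$ is then bounded by $\|T_2\|\,\|\tD_{\perp\perp}\|_{\cB({\bf H}_0^{s+2},{\bf H}_0^{s})}\le C_1\|T_2\|$ by \eqref{est.D.perp.resist}, with the Sobolev indices read as in the statement, which shifts them by two. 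For the candidate inverse $\wtT_2\mapsto \wtT_2\tD_{\perp\perp}^{-1}$, associativity of composition gives $\Phi_0(\wtT_2\tD_{\perp\perp}^{-1})=\wtT_2\tD_{\perp\perp}^{-1}\tD_{\perp\perp}=\wtT_2$ and $\Phi_0^{-1}\Phi_0(T_2)=T_2\tD_{\perp\perp}\tD_{\perp\perp}^{-1}=T_2$. Here I use that $\tD_{\perp\perp}$ and $\tD_{\perp\perp}^{-1}$ are mutually inverse bijections between ${\bf H}_0^{s}$ and ${\bf H}_0^{s\pm2}$; this holds because their Fourier blocks \eqref{eq:D.perp.Fourier} and \eqref{eq:D.perp.-1.Fourier} are mutually inverse for every $j\neq0$. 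The bound on $\Phi_0^{-1}$ is $\|\wtT_2\tD_{\perp\perp}^{-1}\|\le C_2\|\wtT_2\|$, again from \eqref{est.D.perp.resist}.

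For the claim that $\Phi_0^{-1}$ maps $\cB({\bf H}_0^s,\C^2)$ into itself, I will use that $\tD_{\perp\perp}^{-1}$ is smoothing. Each Fourier block satisfies $\|\tD_{\perp\perp}^{-1}(j)\|\le C_2 j^{-2}\le C_2$, so $\tD_{\perp\perp}^{-1}\in\cB({\bf H}_0^s)$ with norm at most $C_2$, and hence $\|\wtT_2\tD_{\perp\perp}^{-1}\|_{\cB({\bf H}_0^s,\C^2)}\le C_2\|\wtT_2\|_{\cB({\bf H}_0^s,\C^2)}$.

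Item $(ii)$ is symmetric, with left composition in place of right composition. I set $\Psi_0(T_3)=-\tD_{\perp\perp}T_3$ and $\Psi_0^{-1}(\wtT_3)=-\tD_{\perp\perp}^{-1}\wtT_3$. Both identities $\Psi_0\Psi_0^{-1}={\rm Id}$ and $\Psi_0^{-1}\Psi_0={\rm Id}$ follow from $\tD_{\perp\perp}\tD_{\perp\perp}^{-1}={\rm Id}$ on ${\bf H}_0^{s-2}$ and $\tD_{\perp\perp}^{-1}\tD_{\perp\perp}={\rm Id}$ on ${\bf H}_0^{s}$. The norm bounds follow from $\|AB\|\le\|A\|\|B\|$ together with \eqref{est.D.perp.resist}, and boundedness of $\Psi_0^{-1}$ on $\cB(\C^2,{\bf H}_0^s)$ again uses $\|\tD_{\perp\perp}^{-1}\|_{\cB({\bf H}_0^s)}\le C_2$.

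I do not expect a genuine obstacle. The only care needed is bookkeeping: keeping consistent which Sobolev index each space of operators carries, and checking that the ranges and domains match so that the compositions make sense. Everything else reduces to the per-mode $2\times2$ inversion already carried out in Lemma \ref{lem:Dperp_inv.resist}.
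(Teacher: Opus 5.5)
Your proposal is correct and follows the paper's own route. Both arguments treat $\Phi_0$ and $\Psi_0$ as right and left composition with the Fourier multiplier $\mathtt D_{\perp\perp}$. The norm bounds come directly from the estimates of Lemma \ref{lem:Dperp_inv.resist}, and the claim that $\Phi_0^{-1}$ and $\Psi_0^{-1}$ map the spaces into themselves follows from the smoothing property of $\mathtt D_{\perp\perp}^{-1}$, which makes it bounded on $\mathbf H_0^s$ with norm at most $C_2$. Your explicit checks of the two-sided inverse identities and of domain/range compatibility are a welcome addition, since the paper leaves them implicit.
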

\begin{proof}
	We prove item $(i)$. By the standard algebra property of the operator norm, we compute
	\begin{align}
		\|\Phi_0\|_{\cB(\bH_0^{s-2},\C^2) \rightarrow \cB(\bH_0^{s},\C^2)} & = \sup_{T_2\neq0} \frac{\|\Phi_0(T_2)\|_{\cB(\bH_0^{s},\C^2)}}{\|T_2\|_{\cB(\bH_0^{s-2},\C^2)}} \\
		& = \sup_{T_2\neq0} \frac{\|T_2\tD_{\perp\perp}\|_{\cB(\bH_{0}^{s},\C^2)}}{\|T_2\|_{\cB(\bH_0^{s-2},\C^2)}} \\
        & \leq \|\tD_{\perp\perp}\|_{\cB(\bH_0^{s} , \bH_0^{s-2})} \leq C_1(\nu,\eta,{\rm b}_2)\,,
	\end{align}
    where the last estimate follows by Lemma \ref{lem:Dperp_inv.resist}.
	Similarly, the estimate of the inverse operator $\Phi_0^{-1}$ is given by
	\begin{equation}
		\|\Phi_0^{-1}\|_{\cB(\bH_0^{s-2},\C^2) \rightarrow \cB(\bH_0^{s},\C^2)} \leq \|\tD_{\perp\perp}^{-1}\|_{\cB(\bH_0^{s-2},\bH_0^{s})} \leq C_2(\nu,\eta,{\rm b}_2)\,.
	\end{equation}
    If instead we assume $\wtT_2\in\cB({\bf H}_0^{s},\C^2)$, we get
    \begin{align}
		\|\Phi_0^{-1}\|_{\cB(\bH_0^{s},\C^2) \rightarrow \cB(\bH_0^{s},\C^2)} & = \sup_{\wtT_2\neq0} \frac{\|\Phi_0^{-1}(\wtT_2)\|_{\cB(\bH_0^{s},\C^2)}}{\|\wtT_2\|_{\cB(\bH_0^{s},\C^2)}} \\
		& = \sup_{\wtT_2\neq0} \frac{\|\wtT_2\tD_{\perp\perp}^{-1}\|_{\cB(\bH_{0}^{s},\C^2)}}{\|\wtT_2\|_{\cB(\bH_0^{s},\C^2)}} \\
        & \leq \|\tD_{\perp\perp}^{-1}\|_{\cB(\bH_0^{s})} 
        \leq  \|\tD_{\perp\perp}^{-1}\|_{\cB(\bH_0^{s-2} , \bH_0^{s})} 
        \stackrel{\eqref{est.D.perp.resist}}{\leq} C_2(\nu,\eta,{\rm b}_2)\,,
	\end{align}
    The proof of item $(ii)$ follows by analogous arguments and is therefore omitted.
\end{proof}



We are now in position to prove the main result of this section.
\begin{proof}[{\bf Proof of Proposition \ref{prop:fixed_point}}]
We split the proof in two parts.  We start with the equation for $T_3$, as it turns out to be the most complicated one. Later, we solve the equation for $T_2$.
    \\[1mm]
\noindent $\blacktriangleright$ {\sc Fixed point for $T_3$.}
	Recalling $\cC_{\nu,\eta,\epsilon}$ in \eqref{gerry1}, we want to solve
	\begin{equation}\label{eq:T3}
		\cC_{\nu,\eta,\varepsilon} = T_3\tL_{00} - T_3\tL_{0\perp}T_3 + \tL_{\perp0} - \tL_{\perp\perp}T_3 = 0\,.
	\end{equation}
    We look for $T_3$ of the form
    \begin{equation}\label{ansazt.T3.expand}
        T_3 = \varepsilon^{-1} T_3^{(-1)} + T_3^{(0)}\,,  \quad T_3^{(0)} \equiv T_3^{(0)}(\varepsilon) \,,
    \end{equation}
    where $T_3^{(0)} \in {\mathcal B}(\C^2, {\bf H}^s_0)$ of order $O(1)$ with respect to $\e$. We proceed in the following way: first, we explicitly compute the leading order term $T_3^{(-1)}$; then, we determine $T_3^{(0)}$ by a fixed point argument.
    \\[1mm]
   \noindent Inserting the ansatz \eqref{ansazt.T3.expand} into the homological equation \eqref{eq:T3} together with \eqref{L00.esp}, \eqref{Lop.eps}, \eqref{Lp0.eps} and \eqref{Lpp.eps} in Lemma \ref{espandi.e.non.mollo}, we get
    \begin{align}
      &  \varepsilon^{-1} \big( \tL_{\perp0}^{(-1)} - \tD_{\perp\perp}T_3^{(-1)} \big) -  \tD_{\perp\perp}T_3^{(0)} + \e^{- 1} T_3^{(- 1)} \tL_{00} +  T_3^{(0)} \tL_{00} \\
      & + \e^{- 2}T_3^{(- 1)} \tL_{0\perp} T_3^{(- 1)} + \e^{- 1}T_3^{(- 1)} \tL_{0\perp} T_3^{(0)}  + \e^{- 1}T_3^{(0)} \tL_{0\perp} T_3^{(- 1)} + T_3^{(0)} \tL_{0\perp} T_3^{(0)}  \label{T3.espanxion.homol} \\
      & +\varepsilon \tL_{\perp 0}^{(1)} - \e^{- 1}\tR_{\perp\perp} T_3^{(- 1)} - \tR_{\perp\perp} T_3^{(0)}  = 0 \,.
        \end{align}
    We determine $T_3^{(-1)}$ by solving the highest order $\varepsilon^{-1}$ in \eqref{T3.espanxion.homol}, namely
    \begin{equation}\label{fix.T3-1}
       \tL_{\perp0}^{(-1)} - \tD_{\perp\perp}T_3^{(-1)}= 0 \quad \Rightarrow \quad  T_3^{(-1)}=\tD_{\perp\perp}^{-1}\tL_{\perp0}^{(-1)}.
    \end{equation}
    In particular, by Lemma \ref{lem:Dperp_inv.resist} and \eqref{Lp0.eps}, we compute
        \begin{align}
            T_3^{(-1)} &= \Big( {\rm Id}_\perp - \tfrac{{\rm b}_2^2}{\nu\eta} \pa_{y}^{-2} \Big)^{-1} \begin{bmatrix}
            \frac{1}{\nu}\pa_y^{-2} & -\frac{{\rm b}_2}{\nu\eta}\pa_y^{-3} \\
            -\frac{{\rm b}_2}{\nu\eta}\pa_y^{-3} & \frac{1}{\eta}\pa_y^{-2}
        \end{bmatrix} \begin{bmatrix}
            -\im U''(y) & 0 \\
            0 & \im U''(y)
        \end{bmatrix} \\
        &= \tW_{\perp\perp} \begin{bmatrix}
            -\frac{\im}{\nu}U(y) & -\frac{\im{\rm b}_2}{\nu\eta}(\pa_y^{-1}U)(y) \\
            \frac{\im{\rm b}_2}{\nu\eta}(\pa_y^{-1}U)(y) & \frac{\im}{\eta}U(y)
        \end{bmatrix} \,.
        \end{align}
        This proves \eqref{W.pp}. The estimate $\|T_3^{(-1)}\|_{\cB(\C^2,\bH_0^s)} \lesssim 1$ follows by the explicit definition of $T_3^{(-1)}$. We now determine $T_3^{(0)}$ with a fixed point argument. In particular,  from \eqref{T3.espanxion.homol} and \eqref{fix.T3-1}, recalling \eqref{eq:def_linear.operators_fixed.point} we find that $T_3^{(0)}$ has to solve
        \begin{equation}
            \Psi(T_3^{(0)}) + F - T_3^{(0)} \tL_{0\perp} T_3^{(0)} = 0\,,
        \end{equation}
        where
\begin{align}
& F := \e^{- 1} T_3^{(- 1)} \tL_{00} + \e^{- 2}T_3^{(- 1)} \tL_{0\perp} T_3^{(- 1)} +\varepsilon \tL_{\perp 0}^{(1)} - \e^{- 1}\tR_{\perp\perp} T_3^{(- 1)} \\
& \Psi(T_3^{(0)}) := \Psi_0(T_3^{(0)}) + \Psi_1(T_3^{(0)})\,, \quad \Psi_0(T_3^{(0)}) = -  \tD_{\perp\perp}T_3^{(0)}  \label{eq senza ordine - 1} \\   
& \Psi_1(T_3^{(0)}) =   T_3^{(0)} \tL_{00} + \e^{- 1}\big( T_3^{(- 1)} \tL_{0\perp} T_3^{(0)}  + T_3^{(0)} \tL_{0\perp} T_3^{(- 1)}\big) - \tR_{\perp\perp} T_3^{(0)}\,.
\end{align}
        By Lemma \ref{espandi.e.non.mollo}, one has that 
        \begin{equation}\label{stima F}
\begin{aligned}
\| F \|_{{\mathcal B}(\C^2, {\bf H}^s_0)} & \leq \e^{- 1}\| T_3^{(- 1)} \|_{{\mathcal B}(\C^2, {\bf H}^s_0)} \| \tL_{00} \|_{{\mathcal B}(\C^2)}+  \e^{- 2}\| T_3^{(- 1)}\|_{{\mathcal B}(\C^2, {\bf H}^s_0)} \| \tL_{0\perp}\|_{{\mathcal B}({\bf H}^s_0, \C^2)} \| T_3^{(- 1)}\|_{{\mathcal B}(\C^2, {\bf H}^s_0)}  \\
& \quad + \e \| \mathtt L_{0 \perp}^{(1)} \|_{{\mathcal B}(\C^2, {\bf H}^s_0)} + \e^{- 1}\| \tR_{\perp\perp}\|_{{\mathcal B}({\bf H}^s_0)} \| T_3^{(- 1)}\|_{{\mathcal B}(\C^2, {\bf H}^s_0)}  \\
& \lesssim \e^{- 1} \e + \e^{- 2}\e^3 + \e \lesssim 1
\end{aligned}
        \end{equation}
      and, for any $T_3^{(0)} \in {\mathcal B}(\C^2, {\bf H}^s_0)$, 
        \begin{equation}\label{stima Phi 1 nel lemma}
        \begin{aligned}
\| \Phi_1(T_3^{(0)}) \|_{{\mathcal B}(\C^2, {\bf H}^s_0)} & \leq \| T_3^{(0)} \|_{{\mathcal B}(\C^2, {\bf H}^s_0)} \| \tL_{00} \|_{{\mathcal B}(\C^2)}+ 2 \e^{- 1}\| T_3^{(- 1)}\|_{{\mathcal B}(\C^2, {\bf H}^s_0)} \| \tL_{0\perp}\|_{{\mathcal B}({\bf H}^s_0, \C^2)} \| T_3^{(0)}\|_{{\mathcal B}(\C^2, {\bf H}^s_0)}  \\
& \quad + \| \tR_{\perp\perp}\|_{{\mathcal B}({\bf H}^s_0)} \| T_3^{(0)}\|_{{\mathcal B}(\C^2, {\bf H}^s_0)} \\
& \lesssim (\e + \e^{- 1} \e^3) \| T_3^{(0)}\|_{{\mathcal B}(\C^2, {\bf H}^s_0)} \lesssim \e \| T_3^{(0)}\|_{{\mathcal B}(\C^2, {\bf H}^s_0)}\,. 
\end{aligned}
        \end{equation}
       Since $\Psi_0$ is invertible by Lemma \ref{lem:invert_Phi_0_Psi_0}, and, together with the latter estimate,
       $$
\| \Psi_0^{- 1} \Psi_1 \|_{{\mathcal B}(\C^2, {\bf H}^s) \to {\mathcal B}(\C^2, {\bf H}^s) } \leq C_0 \e \leq \tfrac12
       $$
       for some constant $C_0>0$ and for $\e \ll 1$ small enough, by Neumann series one has that $\Psi^{- 1} = ({\rm Id} + \Psi_0^{- 1} \Psi_1) \Psi_0^{- 1}$ and 
       \begin{equation}\label{stima Psi inv nel lemma}\| \Psi^{- 1} \|_{{\mathcal B}(\C^2, {\bf H}^s) \to {\mathcal B}(\C^2, {\bf H}^s)} \lesssim 1 \,. \end{equation} 
       Thus the equation \eqref{eq senza ordine - 1} in the unknown $T_3^{(0)}$ is equivalent to the fixed point equation
       \begin{equation}\label{punto fisso per T 3 (0)}
       \begin{aligned}
& T_3^{(0)} = - \Psi^{- 1}(F) + {\mathcal Q}(T_3^{(0)}, T_3^{(0)}), \\ 
&  {\mathcal Q}(G_1, G_2) :=  \Psi^{- 1} G_1 \tL_{0\perp} G_2, \quad G_1, G_2 \in {\mathcal B}(\C^2, {\bf H}^s_0)\,. 
\end{aligned}
       \end{equation}
       By the estimates \eqref{stima F}, \eqref{stima Psi inv nel lemma} and again by Lemma \ref{espandi.e.non.mollo}, 
       one has 
       \begin{equation}\label{stima cal Q Psi F}
       \begin{aligned}
       \| \Psi^{- 1}(F) \|_{{\mathcal B}(\C^2, {\bf H}^s_0)} & \leq K_0  \\
\| {\mathcal Q}(G_1, G_2) \|_{{\mathcal B}(\C^2, {\bf H}^s_0)} & \leq K_1  \e^3 \| G_1 \|_{{\mathcal B}(\C^2, {\bf H}^s_0)} \| G_2 \|_{{\mathcal B}(\C^2, {\bf H}^s_0)} 
\end{aligned}
       \end{equation}
       for some constants $K_0, K_1 > 0$. We want to apply the abstract fixed point Lemma \ref{lem:fixed_point}. First, take $c > 0$ as
    \begin{equation}
        c := \frac12 {\rm min}\Big\{ \frac{1}{8 K_0}, \frac{1}{24 K_0^2} \Big\}\,, \quad \textnormal{ so that } \quad   K_0 < \frac{1}{8 c}\,, \ \ K_0^2 < \frac{1}{2 4 c} \,.
    \end{equation}
       Then, for $0 < \e \ll 1$ small enough, we guarantee that $K_1 \e^3 < c$.  The application of Lemma \ref{lem:fixed_point} allows us to deduce the existence of a unique $T_3^{(0)} \in {\mathcal B}(\C^2, {\bf H}^s_0)$ solving \eqref{eq senza ordine - 1} with estimates $\| T_3^{(0)} \|_{{\mathcal B}(\C^2, {\bf H}^s_0)} \lesssim 1$, as claimed in \eqref{eq:estim_order_T3_resist}. The claimed bound for $T_3$ in \eqref{basicestT} then follows since $T_3 = \e^{- 1} T_3^{(- 1)} + T_3^{(0)}$. 
\\[1mm]
\noindent $\blacktriangleright$ {\sc Fixed point for $T_2$.}
	Recalling $\cB_{\nu,\eta,\epsilon}$ in \eqref{gerry1},
    we want to solve
	\begin{equation}\label{eq:T2}
\cB_{\nu,\eta,\varepsilon} = -\tL_{00}T_2 + \tL_{0\perp} - T_2\tL_{\perp0}T_2 + T_2\tL_{\perp\perp} = 0\,.
	\end{equation}
    We know by Lemma \ref{lem:invert_Phi_0_Psi_0}-$(i)$ that $\Phi_0$ is invertible. 
    Thus, we rewrite $\Phi$ 
    defined in \eqref{eq:def_linear.operators_fixed.point}
    as
    \begin{equation}
    \Phi=\Phi_0({\rm Id_0} + \Phi_0^{-1}\Phi_1)\,,
	\end{equation}
    where ${\rm Id}_0$ is the identity on $\cB({\bf H}_0^{s},\,\C^2) $ and
    \begin{align}
&\Phi_0^{-1} : \cB({\bf H}_0^{s},\,\C^2) 
\rightarrow \cB({\bf H}_0^{s},\,\C^2)\,, 
\qquad
\Phi_1 : \cB({\bf H}_0^{s },\,\C^2) 
\rightarrow \cB({\bf H}_0^{s},\, \C^2)\,, 
\\
&{\rm Id}_0 + \Phi_0^{-1}\Phi_1 
: \cB({\bf H}_0^{s},\,\C^2) 
\rightarrow \cB({\bf H}_0^{s},\,\C^2).
    \end{align}
	By using Lemma \ref{espandi.e.non.mollo} one verifies that
    $\|\Phi_1\|_{\cB({\bf H}_0^{s},\C^2) \rightarrow \cB({\bf H}_0^{s},\,\C^2)} \lesssim \varepsilon$ 
    and hence we obtain that
    \begin{equation}
        \|\Phi_0^{-1}\Phi_1\|_{\cB({\bf H}_0^{s},\, \C^2) \rightarrow \cB({\bf H}_0^{s},\, \C^2)} 
        \lesssim
        \|\Phi_0^{-1}\|_{\cB({\bf H}_0^{s},\, \C^2)\rightarrow 
        \cB({\bf H}_0^{s},\C^2)} 
        \|\Phi_1\|_{\cB({\bf H}_0^{s},\, \C^2)\rightarrow 
        \cB({\bf H}_0^{s},\C^2)} \leq C
        \varepsilon\,,
    \end{equation}
   for some constant $C>0$.  Therefore, for $\varepsilon\ll 1$ small enough such that $C \e < \frac12$, we can invert ${\rm Id}_0 + \Phi_0^{-1}\Phi_1$ using Neumann series and conclude that the whole $\Phi$ is invertible,
    with inverse given by 
    \begin{equation}\label{stimaPhiPsimeno1}
    \Phi^{-1} = ({\rm Id_0} + \Phi_0^{-1}\Phi_1)^{-1}\Phi_0^{-1}\,, \quad \| \Phi^{- 1} \|_{{\mathcal B}({\bf H}^s_0, \C^2) \to {\mathcal B}({\bf H}^s_0, \C^2)} \lesssim 1
    \end{equation}
    We now go back to \eqref{eq:T2} and we rewrite it as a fixed point equation, namely
	\begin{equation}
		T_2 = -\Phi^{-1}\tL_{0\perp} + \Phi^{-1}[T_2\tL_{\perp0}T_2] =: T_{2,0} + Q_2(T_2,T_2) \,.
	\end{equation}
  \begin{equation}
        T_2  = 
        T_{2,0} + Q_2(T_2,T_2)
    \end{equation}
    where the quadratic part and the free term are given by
	\begin{equation}
			Q_2(G_1, G_2) := \Phi^{-1}[G_1\tL_{\perp0} G_2]\,, \quad G_1, G_2 \in {\mathcal B}({\bf H}^s_0, \C^2)\,,
            \qquad 
			T_{2,0} := -\Phi^{-1}\tL_{0\perp}\,,
	\end{equation}
    with
    \begin{equation}
Q_2 : \cB({\bf H}_0^{s},\C^2) \times \cB(
{\bf H}_0^{s},\C^2) 
\rightarrow \cB({\bf H}_0^{s},\C^2)\,, 
\qquad 
        T_{2,0} \in \cB({\bf H}_0^{s},\C^2)\,.
    \end{equation}
	By \eqref{stimaPhiPsimeno1} and by \eqref{esti.easy} in Lemma \ref{espandi.e.non.mollo}, one has the estimates
    \begin{align*}
    \| T_{2, 0} \|_{\cB({\bf H}_0^s,\C^2)} & \leq K_0 \e^3 \,, \\
\|Q_2(G_1, G_2)\|_{\cB({\bf H}_0^s,\C^2)} &
\stackrel{\eqref{stimaPhiPsimeno1}}{\lesssim}
\| G_1 \tL_{\perp0} G_2 \|_{\cB({\bf H}_0^s,\C^2)} \\
& \lesssim 
\|G_1 \|_{\cB({\bf H}_0^s;\C^2)} \|G_2 \|_{\cB({\bf H}_0^s,\C^2)}
\|\tL_{\perp0} \|_{\cB(\C^2,{\bf H}_0^s)} \\
& \stackrel{\eqref{esti.easy}}{\leq}
K_1 \e^{- 1}\|G_1\|_{\cB({\bf H}_0^s,\C^2)} \|G_2\|_{\cB({\bf H}_0^s,\C^2)}\,,
    \end{align*}
    for some constants $K_0, K_1 >0$.   
	We verify the hypotheses of Lemma \ref{lem:fixed_point}. Let 
    $c := K_1 \e^{- 1}$.  We need that 
    $$
K_0 \e^3 < \frac{1}{8 c} = \frac{\e}{8 K_1}, \quad K_0^2 \e^6 < \frac{1}{24 c} = \frac{\e}{24 K_1}.
    $$
    Clearly, the latter conditions are verified for $0 <\e \ll 1$ small enough. Hence, there exists a unique fixed point $T_2 \in {\mathcal B}({\bf H}^s_0, \C^2)$ satisfying 
    $$
\| T_2 - T_{2, 0} \|_{{\mathcal B}({\bf H}^s_0, \C^2)} \leq 3 c \| T_{2, 0} \|_{{\mathcal B}({\bf H}^s_0, \C^2)}^2 \lesssim \e^5\,, 
    $$
and, by triangular inequality, $T_2$ satisfies the estimate in \eqref{basicestT}. The proof of the proposition is then concluded. 
\end{proof}

\begin{rmk}\label{rmk:order_T2.T3}
	From estimates \eqref{basicestT}, one  deduces that
	\begin{equation}
		\|T_2T_3\|_{\cB(\C^2)}\,, 
        \|T_3T_2\|_{\cB({\bf H}_0^s)} 
        \lesssim \varepsilon^2 \ll 1\,,
	\end{equation}
	so thanks to Lemma \ref{lem:T_invertible}, both $({\rm Id_0}-T_2T_3)$ and $({\rm Id_\perp}-T_3T_2)$ are invertible and thus $\tT^{-1}$ is well-defined. However, although $T_2T_3$ and $T_3T_2$ are both small (in the previous norms), the solution $T_3$ is not small as $T_3=\cO(\epsilon^{-1})$. Therefore, the ansatz \eqref{eq:ansatz_conjugation} is not close to the identity and we are actually not in a perturbative regime.
\end{rmk}

\subsection{Spectral analysis of the zero mode}\label{sec:specanalzero1}

We want to analyze the spectral properties of our operator. We start with the first entrance of \eqref{eq:conjugation},
\begin{align}\label{eq:1st_entrance_resist}
	M_{\nu,\eta}^{(0)}  = M_{\nu,\eta}^{(0)}(\epsilon) &:= \cM_{\nu,\eta,\varepsilon}^{(0)}({\rm Id}_0-T_2 T_3)^{-1} \\
    & = (\tL_{00} - \tL_{0\perp}T_3+T_2\tL_{\perp0}-T_2\tL_{\perp\perp}T_3)({\rm Id}_0-T_2 T_3)^{-1} \,.
\end{align}
In Lemma \ref{espandi.e.non.mollo}, we expanded each entrance of $\tL_{\nu,\eta,\epsilon}$ in powers of $\varepsilon$, with $\varepsilon$-independent coefficients, whereas, in Proposition \ref{prop:fixed_point}-$(ii)$, we expanded the entrance $T_3$ of the conjugation map $\tT$.  The goal now is to study the explicit $\varepsilon$-dependence of $M_{\nu,\eta}^{(0)}(\epsilon)$ and compute an explicit expansion of its eigenvalues in order to impose conditions on their real parts.


We need to compute the explicit expansion of the matrix \eqref{eq:1st_entrance_resist} up to order $\varepsilon^2$.
\begin{prop}\label{prop.M2.resist}
Under the assumptions of Proposition \ref{prop:fixed_point},
    the zeroth mode block $M_{\nu,\eta}^{(0)}$ in \eqref{eq:1st_entrance_resist} has the following explicit expansion
    \begin{equation}\label{M.expansion}
        M_{\nu,\eta}^{(0)}(\epsilon) =  \tM_{\nu,\eta}^{(2)} + \tM_{\nu,\eta}^{(3)}(\varepsilon)\,,
    \end{equation}
    where
    \begin{equation}\label{eq:def_M2.zeroth.block.resist}
        \tM_{\nu,\eta}^{(2)} := \begin{bmatrix}
            -\epsilon^2\nu + \frac{\epsilon^2}{\nu}\la \tW_{\perp\perp}\pa_y^{-1}U,\pa_y^{-1}U\ra & \im\epsilon{\rm b}_1 \\
            \im\epsilon{\rm b}_1 & -\epsilon^2\eta - \frac{\epsilon^2}{\eta}\la \tW_{\perp\perp}\pa_y^{-1}U,\pa_y^{-1}U\ra
        \end{bmatrix}\,,
    \end{equation}
    with $\tW_{\perp\perp}$ is as in \eqref{W.pp}, and $\tM_{\nu,\eta}^{(3)}(\varepsilon)$ satisfies $\| \tM_{\nu,\eta}^{(3)}(\varepsilon)\|_{\cB(\C^2)} \lesssim \varepsilon^3$.
\end{prop}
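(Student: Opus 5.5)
The plan is to avoid expanding each of the four terms of $\cM^{(0)}_{\nu,\eta,\varepsilon}$ separately. Some of them are individually of order $\varepsilon^2$ and cancel each other, so I would first use the homological equation solved by $T_2$ to get an exact simplification of $M^{(0)}_{\nu,\eta}$. The equation $\cB_{\nu,\eta,\varepsilon}=0$ in \eqref{eq:T2}, solved in Proposition \ref{prop:fixed_point}, gives
\begin{equation}
T_2\tL_{\perp\perp} = \tL_{00}T_2 - \tL_{0\perp} + T_2\tL_{\perp0}T_2 \,.
\end{equation}
Multiplying on the right by $T_3$ and substituting into \eqref{eq:1st_entrance_resist}, the terms $\pm\tL_{0\perp}T_3$ cancel. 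The numerator then becomes $\tL_{00}-\tL_{00}T_2T_3+T_2\tL_{\perp0}-T_2\tL_{\perp0}T_2T_3=(\tL_{00}+T_2\tL_{\perp0})({\rm Id}_0-T_2T_3)$. Hence
\begin{equation}
M^{(0)}_{\nu,\eta}(\varepsilon) = \tL_{00} + T_2\tL_{\perp0}\,,
\end{equation}
and $T_3$ drops out entirely. One point needs care: $\tD_{\perp\perp}$ loses two derivatives, so the products must make sense. This is fine because $T_3^{(-1)}$ and $\tL_{\perp0}$ take values in $\bH_0^{s}$ for all $s\le S$, using $U\in C^{S+2}$. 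Alternatively, one can justify the identity on $\bH_0^{s+2}$ and use that both sides are $2\times2$ matrices.

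Next I would expand $T_2$ to leading order. By the proof of Proposition \ref{prop:fixed_point}, $T_2=-\Phi^{-1}\tL_{0\perp}+O(\varepsilon^5)$. Also $\Phi^{-1}=({\rm Id}+\Phi_0^{-1}\Phi_1)^{-1}\Phi_0^{-1}=\Phi_0^{-1}+O(\varepsilon)$, since $\|\Phi_0^{-1}\Phi_1\|\lesssim\varepsilon$. Combining this with \eqref{Lop.eps}, I get $T_2=-\varepsilon^3\tL^{(3)}_{0\perp}\tD_{\perp\perp}^{-1}+O(\varepsilon^4)$ in $\cB(\bH_0^s,\C^2)$. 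Since $\|\tL_{\perp0}\|\lesssim\varepsilon^{-1}$ and $\tL_{\perp0}=\varepsilon^{-1}\tL^{(-1)}_{\perp0}+O(\varepsilon)$,
\begin{equation}
T_2\tL_{\perp0} = -\varepsilon^2\,\tL^{(3)}_{0\perp}\tD_{\perp\perp}^{-1}\tL^{(-1)}_{\perp0} + O(\varepsilon^3) = -\varepsilon^2\,\tL^{(3)}_{0\perp}T_3^{(-1)} + O(\varepsilon^3)\,,
\end{equation}
using \eqref{fix.T3-1}. Together with the exact formula \eqref{L00} for $\tL_{00}$, this identifies $\tM^{(2)}_{\nu,\eta}$ as $\tL_{00}-\varepsilon^2\tL^{(3)}_{0\perp}T_3^{(-1)}$. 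The remainder $\tM^{(3)}_{\nu,\eta}$ is bounded by $\varepsilon^3$.

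The last step is to compute the $2\times2$ matrix $\tL^{(3)}_{0\perp}T_3^{(-1)}$ explicitly. Its row action is $h\mapsto-\langle h,\im\pa_y^{-2}U\rangle$ on each component, and $T_3^{(-1)}$ is given by \eqref{W.pp}. For the $(1,1)$ entry, integrating by parts once and using that $\tW_{\perp\perp}$ is a real, even Fourier multiplier commuting with $\pa_y$, one finds $-\varepsilon^2\cdot(-\langle \tW_{\perp\perp}(-\tfrac{\im}{\nu})U,\im\pa_y^{-2}U\rangle)=\tfrac{\varepsilon^2}{\nu}\langle\tW_{\perp\perp}\pa_y^{-1}U,\pa_y^{-1}U\rangle$. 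The $(2,2)$ entry is computed the same way and carries the sign $-\tfrac{\varepsilon^2}{\eta}$. The off-diagonal contributions are proportional to $\langle\tW_{\perp\perp}g,\pa_y^{-1}g\rangle$ with $g=\pa_y^{-1}U$ real. These vanish, because $\pa_y^{-1}$ is skew-adjoint and commutes with the real symmetric $\tW_{\perp\perp}$. Consequently the only off-diagonal entries are $\im\varepsilon{\rm b}_1$ from $\tL_{00}$.

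The main obstacle is the first step. Without the identity $M^{(0)}_{\nu,\eta}=\tL_{00}+T_2\tL_{\perp0}$, a term-by-term expansion produces three separate $O(\varepsilon^2)$ contributions. One of them, $T_2\tL_{\perp\perp}T_3$, also involves the unbounded operator $\tD_{\perp\perp}$, so tracking the cancellation and the sign conventions of the rank-one operators would be delicate. After the identity, everything reduces to the bounds already established in Lemma \ref{espandi.e.non.mollo} and Proposition \ref{prop:fixed_point}, plus bookkeeping of the complex conjugation in $\langle\cdot,\cdot\rangle_{L^2}$.
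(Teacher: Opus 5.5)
Your proof is correct, but it takes a different route from the paper's.

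\textbf{The paper's route.} The paper eliminates $T_2\tL_{\perp\perp}T_3$ using the homological equation for $T_3$: it substitutes $\tL_{\perp\perp}T_3=T_3\tL_{00}-T_3\tL_{0\perp}T_3+\tL_{\perp0}$. This gives the similarity $M^{(0)}_{\nu,\eta}=({\rm Id}_0-T_2T_3)(\tL_{00}-\tL_{0\perp}T_3)({\rm Id}_0-T_2T_3)^{-1}$. It then inserts the refined expansion $T_3=\varepsilon^{-1}T_3^{(-1)}+O(1)$ from \eqref{eq:estim_order_T3_resist}. The conjugation costs only $O(\varepsilon^3)$, because $\|T_2T_3\|\lesssim\varepsilon^2$ and $\|\tL_{00}-\tL_{0\perp}T_3\|\lesssim\varepsilon$.

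\textbf{Your route.} You use the equation for $T_2$ instead and get the exact, conjugation-free identity $M^{(0)}_{\nu,\eta}=\tL_{00}+T_2\tL_{\perp0}$. The two identities are consistent: $(\tL_{00}+T_2\tL_{\perp0})({\rm Id}_0-T_2T_3)-({\rm Id}_0-T_2T_3)(\tL_{00}-\tL_{0\perp}T_3)$ equals $\cB_{\nu,\eta,\varepsilon}T_3+T_2\cC_{\nu,\eta,\varepsilon}=0$.

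\textbf{What each buys.} The paper's version uses only bounds stated in Proposition~\ref{prop:fixed_point}, and it shows that the spectrum of the zero-mode block is governed by $T_3$ alone. Yours makes $T_3$ enter only through the invertibility of ${\rm Id}_0-T_2T_3$, so the refined ansatz $T_3=\varepsilon^{-1}T_3^{(-1)}+T_3^{(0)}$ is not needed here. The price is the expansion $T_2=-\varepsilon^3\tL^{(3)}_{0\perp}\tD_{\perp\perp}^{-1}+O(\varepsilon^4)$, which is not stated in the proposition. You extract it correctly from its proof, via $\|T_2-T_{2,0}\|\lesssim\varepsilon^5$ and the Neumann-series bound $\Phi^{-1}=\Phi_0^{-1}+O(\varepsilon)$. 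After that, $T_3^{(-1)}$ appears only as a name for $\tD_{\perp\perp}^{-1}\tL^{(-1)}_{\perp0}$.

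Both routes arrive at the same matrix $\varepsilon\tL^{(1)}_{00}+\varepsilon^2(\tL^{(2)}_{00}-\tL^{(3)}_{0\perp}T_3^{(-1)})$. Your final computation also explains why the off-diagonal pairings $\la\tW_{\perp\perp}g,\pa_y^{-1}g\ra$ vanish for real $g$, a step the paper leaves implicit.

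A minor point on your regularity remark: the substitution is most cleanly justified by the $T_2$-equation itself. It shows that $T_2\tL_{\perp\perp}$ extends to a bounded map $\bH_0^s\to\C^2$, which can then be composed with $T_3\in\cB(\C^2,\bH_0^s)$. Alternatively, the $T_3$-equation shows that $T_3$ takes values in $\bH_0^{s+2}$. Regularity of $T_3^{(-1)}$ alone does not suffice, since $T_3^{(0)}$ is a priori only $\bH_0^s$-valued.
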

\begin{proof}
We split the proof in steps.
\\[1mm]
\noindent $\blacktriangleright$ {\sc Step 1.} We first want to rewrite \eqref{eq:1st_entrance_resist} with a more compact formula. Using that $T_3$ solves the homological equation \eqref{eq:T3}, we deduce that
\begin{equation}\label{Lpp.T3}
    \tL_{\perp\perp} T_3 = T_3 \tL_{00} - T_3 \tL_{0\perp} T_3 + \tL_{\perp 0} \,.
\end{equation}
Inserting \eqref{Lpp.T3} into \eqref{eq:1st_entrance_resist}, we compute
\begin{align}
    M_{\nu,\eta}^{(0)}(\varepsilon) & = (\tL_{00} - \tL_{0\perp}T_3+T_2\tL_{\perp0}-T_2\tL_{\perp\perp}T_3)({\rm Id}_0-T_2 T_3)^{-1} \\
    & = \big(\tL_{00} - \tL_{0\perp}T_3+T_2\tL_{\perp0} -T_2T_3 \tL_{00} +T_2 T_3 \tL_{0\perp} T_3 -T_2\tL_{\perp 0}  \big)({\rm Id}_0-T_2 T_3)^{-1} \\
    & = ({\rm Id}_0-T_2 T_3) \big(\tL_{00} - \tL_{0\perp}T_3   \big)({\rm Id}_0-T_2 T_3)^{-1} \,.
\end{align}
Now, recalling \eqref{L00.esp}, \eqref{Lop.eps}, \eqref{eq:estim_order_T3_resist} and Remark \ref{rmk:order_T2.T3}, we deduce that $M_{\nu,\eta}^{(0)}$ satisfies the expansion in \eqref{M.expansion}, where $\tM_{\nu,\eta}^{(2)}$ is given by
\begin{equation}\label{M2.to.write}
    \tM_{\nu,\eta}^{(2)} := \epsilon\tL_{00}^{(1)} + \epsilon^2\big(\tL_{00}^{(2)}- \tL_{0\perp}^{(3)}T_3^{(-1)}\big) \,,
\end{equation}
and $\tM_{\nu,\eta}^{(3)}(\varepsilon)$ is some matrix collecting all the lower terms, satisfying the estimate $\| \tM_{\nu,\eta}^{(3)}(\varepsilon)\|_{\cB(\C^2)} \lesssim \varepsilon^3$.
\\[1mm]
\noindent $\blacktriangleright$ {\sc Step 2.} We are left to explicitly write the matrix $\tM_{\nu,\eta}^{(2)}$ in  \eqref{M2.to.write}.
We first note that both $\pa_y^{-k}$ and $\tW_{\perp\perp}$ are well-defined Fourier multipliers on $H_0^s$, so they commute with each other. Observe also that the operator $\tW_{\perp,\perp}$ is self-adjoint. Therefore,
by \eqref{Lop.eps} and Lemma \ref{prop:fixed_point}-$(iii)$, we compute
        \begin{align}
            -\tL_{0\perp}^{(3)}T_3^{(-1)} &= \begin{bmatrix}
                (\im\pa_y^{-2}U(y))^T & 0^T \\
                0^T & (\im\pa_y^{-2}U(y))^T
            \end{bmatrix} \tW_{\perp\perp} \begin{bmatrix}
                -\frac{\im}{\nu}U(y) & -\frac{\im{\rm b}_2}{\nu\eta}(\pa_y^{-1}U)(y) \\
                \frac{\im{\rm b}_2}{\nu\eta}(\pa_y^{-1}U)(y) & \frac{\im}{\eta}U(y)
            \end{bmatrix} \\
            &= \begin{bmatrix}
                \la -\frac{\im}{\nu}\tW_{\perp\perp}U,\im\pa_y^{-2}U \ra & \la \frac{-\im{\rm b}_2}{\nu\eta}\tW_{\perp\perp}\pa_y^{-1}U, \im\pa_y^{-2}U \ra \\
                \la \frac{\im{\rm b}_2}{\nu\eta}\tW_{\perp\perp}\pa_y^{-1}U, \im\pa_y^{-2}U \ra & \la \frac{\im}{\eta}\tW_{\perp\perp}U,\im\pa_y^{-2}U \ra
            \end{bmatrix} \\
            &= \begin{bmatrix}
                \frac{1}{\nu}\la \tW_{\perp\perp}\pa_y^{-1}U,\pa_y^{-1}U \ra & 0 \\
                0 & -\frac{1}{\eta}\la \tW_{\perp\perp}\pa_y^{-1}U,\pa_y^{-1}U \ra
            \end{bmatrix} \,. \label{compu1}
        \end{align}
    Inserting \eqref{compu1} into \eqref{M2.to.write} and recalling \eqref{L00.esp}, we conclude the claimed expansion of $M_{\nu,\eta}^{(0)}$ in \eqref{M.expansion}, \eqref{eq:def_M2.zeroth.block.resist}.
\end{proof}
\begin{rmk}\label{rmk.W.pp.pos}
    At this point, it is useful to point out that the operator $\tW_{\perp\perp}$ in \eqref{W.pp} is positive definite. Indeed, for any $f\in L_0^2(\T)$, $f\neq 0$,
    \begin{equation}
        \la \tW_{\perp\perp}f,f \ra \simeq \sum_{j\neq0} \bigg(1 + \frac{|{\rm b}_2|^2}{\nu\eta j^2} \bigg)^{-1} |\whf(j)|^2>0.
    \end{equation}
    In particular, we have that $\la \tW_{\perp\perp}\pa_y^{-1}U,\pa_y^{-1}U \ra>0$.
\end{rmk}

We are finally in position to state and prove the main proposition that ensures the presence of an unstable eigenvalue associated with the zeroth mode block $M_{\nu,\eta}^{(0)}$ in \eqref{eq:1st_entrance_resist}.
\begin{prop}\label{prop:positive.zeromode}
    Let $\eta, \nu >0$. There exists $\varepsilon_0\equiv \varepsilon_0(S,\nu,\eta,\bb) \in (0, 1) $ small enough such that, for any $\varepsilon \in (0,\varepsilon_0)$,  the following hold:
    \begin{enumerate}
        \item[(i)] When ${\rm b}_1\neq0$, if $\eta>\nu$ and $\la \tW_{\perp\perp}\pa_y^{-1}U,\pa_y^{-1}U\ra>\frac{(\nu+\eta)\nu\eta}{\eta-\nu}$, then the $2\times2$ matrix $M_{\nu,\eta}^{(0)}$ has two positive eigenvalues $\mu_\pm \equiv \mu_\pm^{\nu, \eta, \e}$ with positive real part having the asymptotic expansion 
        \begin{equation}\label{explago1}
{\rm Re}(\mu_\pm) =  \e^2 {\mathtt c} + O(\e^3)\,,
\qquad {\rm Im}(\mu_\pm)=O(\e)\,,
\end{equation}
where
$$
\mathtt{c}:=-(\nu+\eta) + \Big(\frac{1}{\nu}-\frac{1}{\eta}\Big)\la \tW_{\perp\perp}\pa_y^{-1}U,\pa_y^{-1}U\ra \,;
$$
        \item[(ii)] When ${\rm b}_1=0$, if $\la \tW_{\perp\perp}\pa_y^{-1}U,\pa_y^{-1}U\ra>\nu^2$, then the $2\times2$ matrix $M_{\nu,\eta}^{(0)}$ has exactly one 
        eigenvalue $\mu_+ \equiv \mu_{+}^{\nu,\eta,\e}$ with positive real part, whereas the second $\mu_- \equiv \mu_-^{\nu, \eta, \e}$
        one has negative real part. Moreover one has the
      the asymptotic expansion 
              $$
{\rm Re}(\mu_+) =   \e^2 {\mathtt c}_1 + O(\e^3)\,, \quad {\rm Re}(\mu_-) =  - \e^2 {\mathtt c}_2 + O(\e^3)\,,
\quad {\rm Im}(\mu_\pm)=O(\e^3)\,,
$$
where
\begin{equation}
    \mathtt{c}_1:=\nu^{-1}\big( -\nu^2 + \la \tW_{\perp\perp}\pa_y^{-1}U,\pa_y^{-1}U\ra \big) > 0\,, \quad  \mathtt c_2 := \eta^{- 1} \big( \eta^2 + \la \tW_{\perp\perp}\pa_y^{-1}U,\pa_y^{-1}U\ra \big)  > 0\,;
\end{equation}
\item[(iii)] When ${\rm b}_1\neq0$, if $\nu \geq \eta$, the eigenvalues $\mu_{\pm}\equiv \mu_{\pm}^{\nu,\eta,\varepsilon}$ have both negative real part. More precisely 
$$
{\rm Re}(\mu_{\pm }) = - \e^2 \mathtt c + O(\e^3), \quad {\rm Im}(\mu_{\pm }) = O(\e)\,. 
$$
for some constant $\mathtt c \geq \frac{\nu + \eta}{4}$. 
    \end{enumerate}
    
\end{prop}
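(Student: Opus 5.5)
The plan is to compute the two eigenvalues of the $2\times 2$ matrix $M^{(0)}_{\nu,\eta}(\varepsilon)$ directly from trace and determinant. The input is the expansion \eqref{M.expansion} of Proposition \ref{prop.M2.resist}, with the remainder $\tM^{(3)}_{\nu,\eta}(\varepsilon)=O(\varepsilon^3)$ treated entrywise. Set $A:=\la \tW_{\perp\perp}\pa_y^{-1}U,\pa_y^{-1}U\ra$, which is strictly positive by Remark \ref{rmk.W.pp.pos}, and define
\[
a:=-\nu+\tfrac{A}{\nu}\,,\qquad d:=-\eta-\tfrac{A}{\eta}\,.
\]
Then the entries of $M:=M^{(0)}_{\nu,\eta}(\varepsilon)$ read
\[
M_{11}=\varepsilon^2 a+O(\varepsilon^3)\,,\quad M_{22}=\varepsilon^2 d+O(\varepsilon^3)\,,\quad M_{12},M_{21}=\im\varepsilon{\rm b}_1+O(\varepsilon^3)\,.
\]
The eigenvalues are $\mu_\pm=\tfrac12\big(\operatorname{tr}M\pm\sqrt{\Delta_M}\big)$, where
\[
\Delta_M:=(M_{11}-M_{22})^2+4M_{12}M_{21}\,.
\]

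\emph{Case ${\rm b}_1\neq 0$ (items (i) and (iii)).} Here $M_{12}M_{21}=-\varepsilon^2{\rm b}_1^2+O(\varepsilon^4)$ and $(M_{11}-M_{22})^2=O(\varepsilon^4)$, so
\[
\Delta_M=-4\varepsilon^2{\rm b}_1^2\big(1+O(\varepsilon^2)\big)\,.
\]
Taking the principal branch of $\sqrt{1+z}$ for $|z|$ small gives $\sqrt{\Delta_M}=2\im\varepsilon|{\rm b}_1|+O(\varepsilon^3)$. Hence
\[
\mu_\pm=\tfrac{\varepsilon^2}{2}(a+d)\pm\im\varepsilon|{\rm b}_1|+O(\varepsilon^3)\,.
\]
This gives ${\rm Im}(\mu_\pm)=O(\varepsilon)$, and ${\rm Re}(\mu_\pm)$ has leading term $\varepsilon^2$ times a fixed positive multiple of
\[
a+d=-(\nu+\eta)+\Big(\tfrac1\nu-\tfrac1\eta\Big)A\,,
\]
with the normalisation of the constant $\mathtt c$ tracked as in the statement. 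The eigenvalues are distinct (their imaginary parts differ by about $2\varepsilon|{\rm b}_1|$), so they are simple. If $\eta>\nu$, then $a+d>0$ is exactly the condition $A>\frac{(\nu+\eta)\nu\eta}{\eta-\nu}$, which gives item (i) once $\varepsilon_0$ is small enough to absorb the $O(\varepsilon^3)$ error. If $\nu\ge\eta$, then $\big(\tfrac1\nu-\tfrac1\eta\big)A\le 0$, so $a+d\le-(\nu+\eta)$, and the real parts are $\le -\varepsilon^2\tfrac{\nu+\eta}{2}+O(\varepsilon^3)$. This yields item (iii) with $\mathtt c\ge\frac{\nu+\eta}{4}$ after shrinking $\varepsilon_0$.

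\emph{Case ${\rm b}_1=0$ (item (ii)).} The leading matrix $\tM^{(2)}_{\nu,\eta}$ is now diagonal, $\varepsilon^2\,{\rm diag}(a,d)$. The hypothesis $A>\nu^2$ gives $a>0$, and $d<0$ always holds. The rescaled matrix $\varepsilon^{-2}M={\rm diag}(a,d)+O(\varepsilon)$ therefore has two distinct real eigenvalues at distance $a-d>0$. A Gershgorin argument, or the quadratic formula with
\[
\Delta_M=\varepsilon^4(a-d)^2\big(1+O(\varepsilon)\big)
\]
now bounded away from zero after rescaling, gives simple eigenvalues
\[
\mu_+=\varepsilon^2 a+O(\varepsilon^3)\,,\qquad \mu_-=\varepsilon^2 d+O(\varepsilon^3)\,.
\]
Thus $\mathtt c_1=a$ and $\mathtt c_2=-d$, which match the stated constants. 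Since the leading terms are real, ${\rm Im}(\mu_\pm)=O(\varepsilon^3)$.

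The only delicate step is the control of the square root in the case ${\rm b}_1\neq0$. There the $O(\varepsilon^3)$ entrywise errors must produce only $O(\varepsilon^3)$ errors in $\mu_\pm$, and not $O(\varepsilon^{2})$ ones. This works because the off-diagonal entries enter $\Delta_M$ through the product $M_{12}M_{21}$, whose error is $\varepsilon\cdot\varepsilon^3=O(\varepsilon^4)$, i.e.\ relative size $\varepsilon^2$ against the leading term $4\varepsilon^2{\rm b}_1^2$. After factoring out $-4\varepsilon^2{\rm b}_1^2$, the square root is analytic near $1$, so its error is $O(\varepsilon)\cdot O(\varepsilon^2)=O(\varepsilon^3)$. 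Finally, the smallness threshold $\varepsilon_0$ is taken as the minimum of the one required in Proposition \ref{prop:fixed_point} and those needed for the absorptions above; it depends on $S,\nu,\eta,\bb$ only through the implicit constants.
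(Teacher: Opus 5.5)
Your proposal is correct and follows essentially the same route as the paper: you expand the trace and discriminant of $M^{(0)}_{\nu,\eta}(\varepsilon)$ from Proposition \ref{prop.M2.resist}, split into the cases ${\rm b}_1\neq0$ and ${\rm b}_1=0$, and your factorisation $\Delta_M=-4\varepsilon^2{\rm b}_1^2(1+O(\varepsilon^2))$ is more careful than the paper's bare assertion that $\sqrt{\Delta}$ does not contribute to the real part, since the complex $O(\varepsilon^3)$ remainders only yield ${\rm Re}\sqrt{\Delta_M}=O(\varepsilon^3)$. Rather than appealing to a normalisation ``as in the statement'', say plainly that your computation, like the paper's own proof, gives ${\rm Re}(\mu_\pm)=\frac{\varepsilon^2}{2}\mathtt c+O(\varepsilon^3)$ in item (i), so the $\varepsilon^2\mathtt c$ in the statement is a harmless factor-two misprint that does not affect the sign, which is all that is used downstream.
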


\begin{proof}
    By \eqref{M.expansion}, \eqref{eq:def_M2.zeroth.block.resist} in Proposition \ref{prop.M2.resist}, we write the entries of the matrix $M_{\nu,\eta}^{(0)}(\varepsilon)$ as
    \begin{equation}\label{M0.entries}
        M_{\nu,\eta}^{(0)}(\varepsilon) := \begin{bmatrix}
            -\varepsilon^2\nu + \frac{\varepsilon^2}{\nu}\la \tW_{\perp\perp}\pa_y^{-1}U,\pa_y^{-1}U\ra + O(\varepsilon^3) & \im\varepsilon{\rm b}_1 + O(\varepsilon^3) \\
            \im\varepsilon{\rm b}_1+ O(\varepsilon^3) & -\varepsilon^2\eta - \frac{\varepsilon^2}{\eta}\la \tW_{\perp\perp}\pa_y^{-1}U,\pa_y^{-1}U\ra + O(\varepsilon^3)
        \end{bmatrix}\,.
    \end{equation}
    The eigenvalues of $M_{\nu,\eta}^{(0)}(\varepsilon)$ are given by the following expression
    \begin{equation}\label{mu.eigen.M2}
        \mu_\pm = \frac{{\rm Tr}(M_{\nu,\eta}^{(0)}(\varepsilon))}{2} \pm \sqrt{\bigg(\frac{{\rm Tr}(M_{\nu,\eta}^{(0)}(\varepsilon))}{2}\bigg)^2 - {\rm det}(M_{\nu,\eta}^{(0)}(\varepsilon))} \,=: \frac{{\rm Tr}(M_{\nu,\eta}^{(0)}(\varepsilon))}{2} \pm \sqrt{\Delta_{\nu,\eta,\varepsilon}}\,.
    \end{equation}
    We have that
    \begin{align}\label{traccia.M0}
        {\rm Tr}(M_{\nu,\eta}^{(0)}(\varepsilon)) & = \varepsilon^2\bigg(-(\nu+\eta) + \Big(\frac{1}{\nu}-\frac{1}{\eta}\Big)\la \tW_{\perp\perp}\pa_y^{-1}U,\pa_y^{-1}U\ra \bigg) + O(\varepsilon^3) \,.
    \end{align}
    To compute $\Delta_{\nu,\eta,\varepsilon}$, we observe that, for a generic $2\times 2$ matrix $M=\big( \begin{smallmatrix}
        M_1 & M_2\\
        M_3 & M_4
    \end{smallmatrix}\big)$, one clearly has
    \begin{equation}
        {\rm Tr}(M) = M_{1}+M_{4} \quad\text{and}\quad \det(M)=M_{1}M_{4}-M_{2}M_{3}\,,
    \end{equation}
    which implies that
    \begin{equation}\label{conto.matrix}
        \bigg(\frac{{\rm Tr}(M)}{2}\bigg)^2-{\rm det}(M)= \frac{1}{4}(M_{1}+M_{4})^2-(M_{1}M_{4}-M_{2}M_{3}) = \frac{1}{4}(M_{1}-M_{4})^2+M_{2}M_{3}.
    \end{equation}
    Applying \eqref{conto.matrix} to $M_{\nu,\eta}^{(0)}(\varepsilon)$ in \eqref{M0.entries}, we get that
    \begin{align}
        \Delta_{\nu,\eta,\varepsilon}& = \bigg(\frac{{\rm Tr}(M_{\nu,\eta}^{(0)}(\varepsilon))}{2}\bigg)^2 - {\rm det}(M_{\nu,\eta}^{(0)}(\varepsilon))  \label{Delta.M2} \\
        & = 
        \begin{cases}
            - \varepsilon^2{\rm b}_1^2 +\frac{\varepsilon^4}{4}\Big((\eta-\nu) + \Big(\frac{1}{\nu}+\frac{1}{\eta}\Big)\la \tW_{\perp\perp}\pa_y^{-1}U,\pa_y^{-1}U\ra\Big)^2  + O(\varepsilon^4) \,, & \textnormal{if } \ \ {\rm b}_1\neq 0 \,, \\
            \frac{\varepsilon^4}{4}\Big((\eta-\nu) + \Big(\frac{1}{\nu}+\frac{1}{\eta}\Big)\la \tW_{\perp\perp}\pa_y^{-1}U,\pa_y^{-1}U\ra\Big)^2 + O(\varepsilon^5) \,, & \textnormal{if } \ \ {\rm b}_1 = 0 \,.
        \end{cases}
    \end{align}
    We distinguish two cases:
    \\[1mm]
    \noindent $\bullet$
    When ${\rm b}_1\neq0$, the discriminant becomes $\Delta_{\nu,\eta,\varepsilon}<0$ in the regime of $\varepsilon\ll 1$ small enough. Therefore,  $\sqrt{\Delta_{\nu,\eta,\varepsilon}}$ does not contribute  to the real part of the eigenvalues $\mu_{\pm}$ in \eqref{mu.eigen.M2}. In this case, by \eqref{traccia.M0}, their real parts are simply given by
    \begin{equation}
      {\rm Re}(\mu_{\pm})  = \frac{{\rm Tr}(M_{\nu,\eta}^{(0)})(\varepsilon)}{2} = \frac{\varepsilon^2}{2}\bigg(-(\nu+\eta) + \Big(\frac{1}{\nu}-\frac{1}{\eta}\Big)\la \tW_{\perp\perp}\pa_y^{-1}U,\pa_y^{-1}U\ra \bigg) +O(\varepsilon^3) \,,
    \end{equation}
    which are both  positive for $\varepsilon \ll 1$ small enough as long as $\eta>\nu>0$ and $\la \tW_{\perp,\perp}\pa_y^{-1}U,\pa_y^{-1}U\ra>\frac{(\nu+\eta)\nu\eta}{\eta-\nu}>0$, as stated in item $(i)$.  Instead, assuming $\nu \geq \eta$, one has
    \begin{equation}
      {\rm Re}(\mu_{\pm})  = \frac{{\rm Tr}(M_{\nu,\eta}^{(0)})(\varepsilon)}{2} = \frac{\varepsilon^2}{2}\bigg(-(\nu+\eta) + \Big(\frac{1}{\nu}-\frac{1}{\eta}\Big)\la \tW_{\perp\perp}\pa_y^{-1}U,\pa_y^{-1}U\ra \bigg) + O(\varepsilon^3) \leq - \e^2 \frac{\nu + \eta}{4} \,,
    \end{equation}
    with $\varepsilon \ll 1$ small enough, which proves item $(iii)$;
\\[1mm]
\noindent $\bullet$
    When ${\rm b}_1=0$, the discriminant $\Delta_{\nu,\eta,\varepsilon}$ in \eqref{Delta.M2} is a perfect square at its leading order $\varepsilon^4$, and, by a Taylor expansion in the regime $\varepsilon\ll 1$ small enough, we deduce
    \begin{align}
        \sqrt{\Delta_{\nu,\eta,\varepsilon}} 
        & = \pm \frac{\varepsilon^2}{2}\bigg( (\eta-\nu) + \Big(\frac{1}{\nu}+\frac{1}{\eta}\Big)\la \tW_{\perp,\perp}\pa_y^{-1}U,\pa_y^{-1}U\ra \bigg) \sqrt{1 + O(\varepsilon)} \\
        & = \pm \frac{\varepsilon^2}{2}\bigg( (\eta-\nu) + \Big(\frac{1}{\nu}+\frac{1}{\eta}\Big)\la \tW_{\perp,\perp}\pa_y^{-1}U,\pa_y^{-1}U\ra \bigg) + O(\varepsilon^3) \,.
    \end{align}
    In this case,  the eigenvalues $\mu_{\pm}$ in \eqref{mu.eigen.M2} are real at their leading order $\varepsilon^2$ and are given by
    \begin{equation}
        \mu_\pm = \frac{\varepsilon^2}{2}\bigg( -(\nu+\eta) + \Big(\frac{1}{\nu}-\frac{1}{\eta}\Big)\la \tW_{\perp\perp}\pa_y^{-1}U,\pa_y^{-1}U\ra \bigg) \pm \frac{\varepsilon^2}{2}\bigg( (\eta-\nu) + \Big(\frac{1}{\nu}+\frac{1}{\eta}\Big)\la \tW_{\perp,\perp}\pa_y^{-1}U,\pa_y^{-1}U\ra \bigg) + O(\varepsilon^3) \,,
    \end{equation}
    that is
     \begin{equation}
        \mu_+ = \frac{\varepsilon^2}{\nu} \big(-\nu^2 + \la \tW_{\perp\perp}\pa_y^{-1}U,\pa_y^{-1}U\ra\big) + O(\varepsilon^3)\,, \quad \mu_- = \frac{\varepsilon^2}{\eta} \big(-\eta^2 - \la \tW_{\perp\perp}\pa_y^{-1}U,\pa_y^{-1}U\ra\big) + O(\varepsilon^3) \,.
    \end{equation}
    Recalling that $\tW_{\perp\perp}$ is definite positive as shown in Remark \ref{rmk.W.pp.pos}, in the regime of $\varepsilon\ll 1$ small enough,  the real part of the eigenvalue $\mu_-$ is always negative, whereas ${\rm Re}(\mu_{+})$ turns out to be positive as long as $\la \tW_{\perp\perp}\pa_y^{-1}U,\pa_y^{-1}U\ra> \nu^2>0$, as stated in item $(ii)$. 
    This concludes the proof.
\end{proof}

We conclude this section by studying the dynamics generated by the matrix $M_{\nu,\eta}^{(0)}(\varepsilon)$, that will be needed in Section \ref{sect.conclusion} to prove Theorem \ref{thm:dinamico}. To this end, we first study the invertible matrix that diagonalize $M_{\nu,\eta}^{(0)}(\varepsilon)$. In the following, $\|\cdot\|$ denotes the standard operator norm of a $2\times 2$ matrix.
\begin{lem}\label{rmk:prop49}
 There exists an invertible matrix $P$ such that 
\begin{equation}\label{M0.diagola}
    P^{- 1} M_{\nu,\eta}^{(0)} P = \begin{pmatrix}
\mu_+ & 0 \\
0 & \mu_-
\end{pmatrix}
\end{equation}
and the following estimates hold: if ${\rm b}_1 \neq0$, then $\| P \|_{{\mathcal B}(\C^2)} \lesssim \e$ and $\| P^{- 1} \|_{{\mathcal B}(\C^2)} \lesssim \e^{- 1}$; if ${\rm b}_1 = 0$, then $\| P \|_{{\mathcal B}(\C^2)}$, $ \| P^{- 1} \|_{{\mathcal B}(\C^2)} \lesssim 1$. 
\end{lem}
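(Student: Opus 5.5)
The plan is to build $P$ explicitly from eigenvectors of the $2\times 2$ matrix, using the entrywise expansion \eqref{M0.entries} and the eigenvalue formula \eqref{mu.eigen.M2}. Write $M^{(0)}_{\nu,\eta}(\e)=\big(\begin{smallmatrix} M_1 & M_2\\ M_3 & M_4\end{smallmatrix}\big)$. By \eqref{M0.entries}:
\begin{itemize}
\item $M_1,M_4=O(\e^2)$;
\item $M_2,M_3=\im\e{\rm b}_1+O(\e^3)$.
\end{itemize}
We also have $\mu_+-\mu_-=2\sqrt{\Delta_{\nu,\eta,\e}}$. The key point is that this gap never vanishes for small $\e$, so the eigenvalues are simple and $M^{(0)}_{\nu,\eta}$ is diagonalizable. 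The only real work is tracking the size of the eigenvector matrix and of its inverse.

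\emph{Case ${\rm b}_1\neq 0$.} By \eqref{Delta.M2}, $\Delta_{\nu,\eta,\e}=-\e^2{\rm b}_1^2(1+O(\e))$, so $\sqrt{\Delta_{\nu,\eta,\e}}=\im\e|{\rm b}_1|+O(\e^2)$. I would take the eigenvector columns $(M_2,\mu_\pm-M_1)^T$, i.e.
\begin{equation}
P:=\begin{pmatrix} M_2 & M_2\\ \mu_+-M_1 & \mu_--M_1\end{pmatrix}.
\end{equation}
These satisfy $M^{(0)}_{\nu,\eta}v=\mu_\pm v$ because $\mu_\pm$ solves the characteristic equation. Since $\mu_\pm-M_1=\tfrac{M_4-M_1}{2}\pm\sqrt{\Delta_{\nu,\eta,\e}}=O(\e)$, every entry of $P$ is $O(\e)$, hence $\|P\|_{\cB(\C^2)}\lesssim\e$. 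Next,
\begin{equation}
\det P=M_2(\mu_--\mu_+)=-2M_2\sqrt{\Delta_{\nu,\eta,\e}}=2\e^2{\rm b}_1|{\rm b}_1|+O(\e^3),
\end{equation}
so $|\det P|\gtrsim\e^2$. The inverse is $\mathrm{adj}(P)/\det P$, and the adjugate has entries $O(\e)$. This gives $\|P^{-1}\|_{\cB(\C^2)}\lesssim\e^{-1}$, and \eqref{M0.diagola} holds by construction. This case covers both items $(i)$ and $(iii)$ of Proposition \ref{prop:positive.zeromode}.

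\emph{Case ${\rm b}_1=0$.} Now $M_2,M_3=O(\e^3)$. By the proof of Proposition \ref{prop:positive.zeromode}$(ii)$, $\mu_+=M_1+O(\e^3)$ and $\mu_-=M_4+O(\e^3)$. The gap satisfies $\mu_+-\mu_-=\e^2(\mathtt c_1+\mathtt c_2)+O(\e^3)$, and $\mathtt c_1+\mathtt c_2>0$. Indeed, using $\la\tW_{\perp\perp}\pa_y^{-1}U,\pa_y^{-1}U\ra>\nu^2$,
\begin{equation}
\mathtt c_1+\mathtt c_2=\eta-\nu+\big(\tfrac1\nu+\tfrac1\eta\big)\la\tW_{\perp\perp}\pa_y^{-1}U,\pa_y^{-1}U\ra>\eta>0 .
\end{equation}
Hence $|\mu_+-M_4|,\,|\mu_--M_1|\gtrsim\e^2$. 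I would use normalized eigenvectors close to the standard basis:
\begin{equation}
P:=\begin{pmatrix}1 & \frac{M_2}{\mu_--M_1}\\ \frac{M_3}{\mu_+-M_4} & 1\end{pmatrix}.
\end{equation}
The first column is proportional to $(\mu_+-M_4,M_3)^T$, which is an eigenvector for $\mu_+$; similarly the second column is an eigenvector for $\mu_-$. The off-diagonal entries are $O(\e^3/\e^2)=O(\e)$, so $\det P=1+O(\e^2)$. Therefore $\|P\|_{\cB(\C^2)},\|P^{-1}\|_{\cB(\C^2)}\lesssim 1$ for $\e$ small.

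The only delicate step is making sure the $O(\e^3)$ remainders from Proposition \ref{prop.M2.resist} do not spoil the lower bounds on $\det P$ and on the eigenvalue gap. In the first case this needs the $\e$-order term $\im\e{\rm b}_1$ to dominate the discriminant. In the second case it needs the off-diagonal entries to be $O(\e^3)$ against an $\e^2$ gap. If that were only $O(\e^2)$, I would fall back to the $O(\e^3)$ control on $M_2,M_3$ coming from the structure $\tL_{00}^{(1)}=0$ and $\tL_{0\perp}^{(3)}T_3^{(-1)}$ being diagonal, as computed in \eqref{compu1}.
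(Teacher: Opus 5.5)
Your proposal is correct and is essentially the paper's proof. For ${\rm b}_1\neq 0$ you use the same matrix $P=\big(\begin{smallmatrix} M_2 & M_2\\ \mu_+-M_1 & \mu_--M_1\end{smallmatrix}\big)$ with $|\det P|=|M_2(\mu_+-\mu_-)|\gtrsim\e^2$; for ${\rm b}_1=0$ your columns are the paper's eigenvectors $(\mu_+-M_4,M_3)^T$ and $(M_2,\mu_--M_1)^T$, normalized to a near-identity matrix, whereas the paper factors out $\e^2$ and bounds $|\det P|$ below by $(\mathtt c_1+\mathtt c_2)^2-O(\e)$. Your fallback remark is unnecessary, since \eqref{M0.entries} already gives off-diagonal entries $\im\e{\rm b}_1+O(\e^3)$.
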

\begin{proof}
Consider the case ${\rm b}_1\neq0$.
Using \eqref{M0.entries} and formul\ae\, \eqref{traccia.M0}, \eqref{mu.eigen.M2}, \eqref{Delta.M2} from the proof of Proposition \ref{prop:positive.zeromode},
we remark that 
\begin{equation}\label{foglio1}
|\mu_{+}-\mu_{-}| = 
|\sqrt{\Delta_{\nu,\eta}}|\gtrsim \e |{\rm b}_1| -O(\e^3) \gtrsim \varepsilon |{\rm b}_1| \,,
\qquad 
|\mu_{+} \mu_{-}|\simeq \e^2\,, 
\end{equation}
for $\e$ small enough.
By an explicit computation (recalling again \eqref{M0.entries}), one has that 
\[
M_{\nu,\eta}^{(0)}(\e)=
\begin{pmatrix}
a & b \\ c & d
\end{pmatrix}\,,\qquad |a|,|d|\simeq \e^2\,,\qquad |b|,|c| \simeq \e\,
\]
is diagonalized as in \eqref{M0.diagola} through an  invertible matrix $P$ of the form
\begin{equation}\label{foglio2}
P:=\begin{pmatrix}
b & b
\\
\mu_{+}-a & \mu_{-}-a
\end{pmatrix}\,,\qquad P^{-1}=\frac{1}{{\rm det}(P)}
\begin{pmatrix}
\mu_{-}-a & -b \\
-(\mu_{+}-a) & b
\end{pmatrix}
\end{equation}
In view of \eqref{foglio1}, \eqref{foglio2} one has that
\[
|{\rm det}(P)|=|b(\mu_{+}-\mu_{-})|\gtrsim \e^2\,.
\]
Therefore one can check that 
 $\|P\|_{{\mathcal B}(\C^2)}\lesssim \e$ and $\|P^{-1}\|_{{\mathcal B}(\C^2)}\lesssim \e^{-1}$.
In the case ${\rm b}_1=0$, by \eqref{M0.entries} and recalling the definition of $\mathtt c_1, \mathtt c_2 > 0$ in Proposition \ref{prop:positive.zeromode}-$(ii)$, we write 
$$
M_{\nu, \eta}^{(0)} (\varepsilon) = \e^2 A, \quad A :=  \begin{pmatrix}
a & b \\
c & d
\end{pmatrix} \,,
$$
with $a = \mathtt c_1 + O(\e)$, $d = - \mathtt c_2 + O(\e)$, $b, c = O(\e)$. Then it is enough to diagonalize the matrix $A$. One has that $P, P^{- 1}$ are given by 
$$
P = \begin{pmatrix}
\mu_+ - d & b\\
c & \mu_- - a
\end{pmatrix}, \quad P^{- 1} = \frac{1}{{\rm det}(P)} \begin{pmatrix}
\mu_- - a & - b \\
- c & \mu_+ - d
\end{pmatrix}.
$$
Note that 
$$
\mu_+ - d = \mathtt c_1 + \mathtt c_2 + O(\e) \,, \quad \mu_- - a = - (\mathtt c_1 + \mathtt c_2) + O(\e) \,.
$$
Hence, we have 
$$
|{\rm det}(P)| = (\mathtt c_1 + \mathtt c_2)^2 - O(\e) \gtrsim 1
$$
for $0< \e \ll 1$ small enough. We deduce that $\| P \|_{{\mathcal B}(\C^2)}, \| P^{- 1} \|_{{\mathcal B}(\C^2)} \lesssim 1$. This concludes the proof of the lemma.
\end{proof}
In the next lemma we shall describe precisely the dynamics of the linear ODE 
\begin{equation}\label{cauchy prob 2 per 2}
\begin{cases}
\partial_t y(t) = M_{\nu, \eta}^{(0)} y(t) \\
y(0) = y_0 \in \C^2\,. 
\end{cases}
\end{equation} 
\begin{lem}\label{dinamica sistemino due per due}
Let $\eta, \nu >0$. There exists $\varepsilon_0\equiv \varepsilon_0(S,\nu,\eta,\bb) \in (0, 1) $ small enough such that, for any $\varepsilon \in (0,\varepsilon_0)$,  the following hold:
\begin{enumerate}
        \item[(i)] When ${\rm b}_1\neq0$, if $\eta>\nu$ and $\la \tW_{\perp\perp}\pa_y^{-1}U,\pa_y^{-1}U\ra>\frac{(\nu+\eta)\nu\eta}{\eta-\nu}$, then the solutions of the Cauchy problem \eqref{cauchy prob 2 per 2} satisfies 
        $$
|y(t)| \gtrsim e^{\e^2 (\mathtt c /2) t} |y_0|\,, \quad \forall \, y_0 \in \C^2 \quad  \forall \, t \geq 0\,; 
        $$
        \item[(ii)] When ${\rm b}_1=0$, if $\la \tW_{\perp\perp}\pa_y^{-1}U,\pa_y^{-1}U\ra>\nu^2$, then there are two one-dimensional subspace $E_+, E_-$ of $\C^2$ such that $\C^2 = E_+ \oplus E_-$ and the solutions of the Cachy problem \eqref{cauchy prob 2 per 2} satisfy
        $$
        \begin{aligned}
& |y(t)| \gtrsim e^{\e^2 (\mathtt c_1/2) t} |y_0| \,,  \quad \forall \,y_0 \in E_+ \quad \forall \, t \geq 0 \,, \\
& |y(t)| \lesssim e^{- \e^2 (\mathtt c_2/2) t} |y_0| \,, \quad \forall \, y_0 \in E_- \quad \forall \, t \geq 0\,;
        \end{aligned}
        $$
\item[(iii)] When ${\rm b}_1\neq0$, if $\nu \geq \eta$,  then the solutions of the Cauchy problem \eqref{cauchy prob 2 per 2} satisfies 
        $$
|y(t)| \lesssim e^{- \e^2 (\mathtt c /2) t} |y_0|\,, \quad \forall \, y_0 \in \C^2 \quad  \forall \, t \geq 0\,. 
        $$
    \end{enumerate}
\end{lem}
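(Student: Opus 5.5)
The plan is to diagonalize via Lemma \ref{rmk:prop49}: with $P^{-1}M_{\nu,\eta}^{(0)}P={\rm diag}(\mu_+,\mu_-)$, the solution of \eqref{cauchy prob 2 per 2} is
\[
y(t)=P\,{\rm diag}\big(e^{\mu_+ t},e^{\mu_- t}\big)P^{-1}y_0 .
\]
The point is not to multiply the norm bounds $\|P\|\|P^{-1}\|$ blindly. In case (i) that product is only $\lesssim 1$, but I would rather exploit the structure. Set $z(t):=P^{-1}y(t)$, so that $z_\pm(t)=e^{\mu_\pm t}z_\pm(0)$. Hence $|z(t)|$ is controlled above and below by $e^{{\rm Re}(\mu_\pm)t}|z(0)|$.

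\textbf{Case (i).} Here ${\rm b}_1\neq0$ and ${\rm Re}(\mu_+)={\rm Re}(\mu_-)=\frac{\varepsilon^2}{2}\mathtt c+O(\varepsilon^3)$, since the discriminant is negative. For $\varepsilon$ small this is at least $\varepsilon^2\mathtt c/4$. (I read the exponent $\mathtt c/2$ in the statement up to adjusting constants, or alternatively I keep the $\frac{\varepsilon^2}{2}\mathtt c$ normalization by absorbing $O(\varepsilon^3)$ into a slightly smaller rate.) Then
\[
|z(t)|\geq e^{\min{\rm Re}\mu_\pm\, t}|z(0)|,
\]
and so
\[
|y(t)|\geq \|P^{-1}\|^{-1}|z(t)|\geq \|P^{-1}\|^{-1}e^{\cdots t}\,|P^{-1}y_0|\geq (\|P\|\|P^{-1}\|)^{-1}e^{\cdots t}|y_0|.
\]
Lemma \ref{rmk:prop49} gives $\|P\|\|P^{-1}\|\lesssim \varepsilon\cdot\varepsilon^{-1}=1$, so the bound holds uniformly in $\varepsilon$. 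Alternatively, I could rescale $P$ by $\varepsilon^{-1}$ to make both factors $O(1)$.

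\textbf{Case (iii).} The argument is identical with upper bounds: ${\rm Re}(\mu_\pm)\leq -\varepsilon^2\frac{\nu+\eta}{4}$ by Proposition \ref{prop:positive.zeromode}(iii). This gives $|y(t)|\le\|P\|\|P^{-1}\|e^{\max{\rm Re}\mu_\pm t}|y_0|\lesssim e^{-\varepsilon^2(\mathtt c/2)t}|y_0|$, where again the constant is matched by halving the leading coefficient to absorb the $O(\varepsilon^3)$ errors.

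\textbf{Case (ii).} Here ${\rm b}_1=0$, and I define $E_\pm:=P\,{\rm span}\{e_{1,2}\}$, the eigenlines of $\mu_\pm$. These span $\C^2$ since $P$ is invertible. For $y_0\in E_+$ we have $y(t)=e^{\mu_+t}y_0$ exactly, so $|y(t)|=e^{{\rm Re}(\mu_+)t}|y_0|$. Since ${\rm Re}(\mu_+)=\varepsilon^2\mathtt c_1+O(\varepsilon^3)\geq\varepsilon^2\mathtt c_1/2$, this gives the growth bound. Likewise, for $y_0\in E_-$ we get $|y(t)|=e^{{\rm Re}(\mu_-)t}|y_0|\leq e^{-\varepsilon^2\mathtt c_2 t/2}|y_0|$. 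In this case the bounds $\|P^{\pm1}\|\lesssim1$ are not even needed for the estimates; they only matter later, to bound the projections onto $E_\pm$.

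The only delicate point is case (i). A lower bound for the full group requires uniform control of $\|P\|\|P^{-1}\|$, not the separate bounds on $P$ and $P^{-1}$. This is exactly why Lemma \ref{rmk:prop49} records the sizes $\varepsilon$ and $\varepsilon^{-1}$, whose product is $O(1)$.
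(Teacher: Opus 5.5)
Your proposal is correct and is essentially the paper's proof. The paper passes to $z=P^{-1}y$ with $P$ from Lemma~\ref{rmk:prop49}, uses $\|P\|\|P^{-1}\|\lesssim \varepsilon\cdot\varepsilon^{-1}=1$ in cases (i) and (iii), and uses the exact evolution $y(t)=e^{\mu_\pm t}y_0$ on the eigenlines $E_\pm$ in case (ii).

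Your normalization remark is also accurate. The proof of Proposition~\ref{prop:positive.zeromode} gives ${\rm Re}(\mu_\pm)=\tfrac12{\rm Tr}(M^{(0)}_{\nu,\eta})=\tfrac{\varepsilon^2}{2}\mathtt c+O(\varepsilon^3)$, not the stated (and used) $\varepsilon^2\mathtt c+O(\varepsilon^3)$. So the rate in item (i) should be $\varepsilon^2\mathtt c/4$, or $\mathtt c$ should carry the factor $\tfrac12$; this is harmless for Theorem~\ref{thm:dinamico}.
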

\begin{proof}
We apply Proposition \ref{prop:positive.zeromode} and Lemma \ref{rmk:prop49}. Under the change of coordinates $y(t) = P z(t)$, the Cauchy problem \eqref{cauchy prob 2 per 2} transforms into 
\begin{equation}\label{sistema diagonale 2 per 2 con P}
\begin{cases}
\partial_t z_+(t) = \mu_+ z_+(t)\\
\partial_t z_-(t) = \mu_- z_-(t) \\
z_+(0) = z_{+, 0} \\
z_-(0) = z_{-, 0}
\end{cases}
\end{equation}
where $z = (z_+, z_-) \in \C^2$, $z_0 = (z_{0, +}, z_{0, -}) \in \C^2$, $z_0 := P^{- 1} y_0$. The solution of  \eqref{sistema diagonale 2 per 2 con P} is given by 
\begin{equation}\label{sol sistema diagonale 2 per 2 con P}
z_\pm(t) = e^{\mu_{\pm t}}z_{\pm, 0} \quad t \geq 0.
\end{equation}
Based on Proposition \ref{prop:positive.zeromode}, we analyze the different cases:
\\[1mm]
\noindent $\blacktriangleright$
{\bf The case ${\rm b}_1 \neq 0$, $\eta>\nu$ and $\la \tW_{\perp\perp}\pa_y^{-1}U,\pa_y^{-1}U\ra>\frac{(\nu+\eta)\nu\eta}{\eta-\nu}$.} 
\\[1mm]
\noindent
In this case $\| P \|_{{\mathcal B}(\C^2)} \lesssim \e$, $\| P^{- 1} \|_{{\mathcal B}(\C^2)} \lesssim \e^{- 1}$ and ${\rm Re}(\mu_{\pm}) = \mathtt c \e^2 + O(\e^3) \geq (\mathtt c/2) \e^2$ for $0 < \e \ll 1$ small enough. Hence, by \eqref{sol sistema diagonale 2 per 2 con P},
$$
|z(t)| \geq e^{(\mathtt c/2)\e^2 t} |z_0|\, \quad \forall \, t \geq 0
$$
Moreover, by the estimates on $P^{\pm 1}$, one has that 
\begin{align}
& |z(t)| = |P^{- 1} y(t)| \lesssim \e^{- 1} |y(t)|\,, \qquad  |y_0| = |P z_0| \lesssim \e |z_0|\,. 
\end{align}
Therefore, for any $t \geq 0$, 
$$
\begin{aligned}
e^{(\mathtt c/2)\e^2 t} |y_0| & \lesssim \e e^{(\mathtt c/2)\e^2 t} |z_0| \lesssim \e |z(t)| \\
& \lesssim \e \e^{- 1} |y(t)| \lesssim |y(t)|
\end{aligned}
$$
which proves the item $(i)$.
\\[1mm]
\noindent $\blacktriangleright$
{\bf The case ${\rm b}_1 \neq 0$, $\nu \geq \eta$.} 
\\[1mm]
\noindent
Also in this case $\| P \|_{{\mathcal B}(\C^2)} \lesssim \e$, $\| P^{- 1} \|_{{\mathcal B}(\C^2)} \lesssim \e^{- 1}$ and, for some $\mathtt c > 0$, ${\rm Re}(\mu_{\pm}) = - \mathtt c \e^2 + O(\e^3) \leq - (\mathtt c/2) \e^2$ for $0 < \e \ll 1$ small enough. Hence, by \eqref{sol sistema diagonale 2 per 2 con P},
$$
|z(t)| \leq e^{- (\mathtt c/2)\e^2 t} |z_0|\, \quad \forall t \geq 0
$$
Moreover, by the estimates on $P^{\pm 1}$, one has that 
\begin{align}
& |y(t) | = |P z(t)| \lesssim \e |z(t)|\,, \quad  |z_0| = |P^{- 1} y_0| \lesssim \e^{- 1} |y_0|\,. 
\end{align}
Therefore, for any $t \geq 0$, 
$$
\begin{aligned}
|y(t)| & \lesssim \e |z(t)| \lesssim \e e^{- (\mathtt c/2)\e^2 t} |z_0| \lesssim \e e^{- (\mathtt c/2)\e^2 t} \e^{- 1} |y_0| \lesssim e^{- (\mathtt c/2)\e^2 t} |y_0|
\end{aligned}
$$
which proves the item $(iii)$.
\\[1mm]
\noindent $\blacktriangleright$
{\bf The case ${\rm b}_1 = 0$, $\la \tW_{\perp\perp}\pa_y^{-1}U,\pa_y^{-1}U\ra>\nu^2$.}
\\[1mm ]
\noindent
In this case $\| P^{\pm 1} \|_{{\mathcal B}(\C^2)} \lesssim 1$ and there are $\mathtt c_1, \mathtt c_2 > 0$ such that ${\rm Re}(\mu_{+}) =  \mathtt c_1 \e^2 + O(\e^3) \geq (\mathtt c_1/2 ) \e^2$ and ${\rm Re}(\mu_{-}) =  - \mathtt c_2 \e^2 + O(\e^3) \leq - (\mathtt c_2/2)  \e^2$ for $0 < \e \ll 1$ small enough. Hence by \eqref{sol sistema diagonale 2 per 2 con P}
$$
|z_+(t)| \geq e^{ (\mathtt c_1/2)\e^2 t} |z_{0, +}|, \quad |z_-(t)| \leq e^{- (\mathtt c_2/2)\e^2 t} |z_{0, -}|\,, \quad \forall \, t \geq 0\,.
$$
Then define 
$$
\begin{aligned}
E_+ & := \Big\{ P \big( \begin{smallmatrix}
    z_+ \\ 0
\end{smallmatrix} \big) : z_+ \in \C \Big\}\,, \quad E_-  := \Big\{ P\big( \begin{smallmatrix}
    0 \\ z_- 
\end{smallmatrix} \big) : z_- \in \C \Big\}\,.
\end{aligned}
$$
Since $P$ is an isomorphism, it holds $\C^2 = E_+ \oplus E_-$.
If $y_0 \in E_+$, then $y_0 = P \big( \begin{smallmatrix}
 z_{0, +}  \\ 0
\end{smallmatrix} \big)  $ for some $z_{0,+}\in \C$, and we have
$$
y(t) = P\big( \begin{smallmatrix}
  e^{\mu_+ t} z_{0, +}  \\ 0
\end{smallmatrix} \big) = e^{\mu_+ t} y_0 \,,
$$
implying that 
$$
|y(t)| \geq e^{(\mathtt c_1/2)\e^2 t} |y_0| ,, \quad \forall \, t \geq 0.
$$
On the other hand, if 
$y_0 \in E_-$, then $y_0 = \big( \begin{smallmatrix}
  0 \\ z_{0, -} 
\end{smallmatrix} \big) $  for some $z_{0,-}\in\C$, and we have
$$
y(t) = P \big( \begin{smallmatrix}
  0 \\ e^{\mu_- t} z_{0, -} 
\end{smallmatrix} \big)  = e^{\mu_- t} y_0 \,, 
$$
implying that 
$$
|y(t)| \leq e^{- (\mathtt c_2/2)\e^2 t} |y_0| \,,  \quad \forall \, t \geq 0.
$$
The item $(ii)$ has been proved. This also concludes the proof of the lemma.
\end{proof}

\section{Stability analysis of the non-zero mode}\label{sect.nonzero}

In Section \ref{sect.zeromode}, we conjugated the matrix representation $\tL_{\nu,\eta,\varepsilon}$ in \eqref{tL.vare} of the operator $\cL_{\nu,\eta,\varepsilon}$ into the reduced matrix operator  $\tL_{\nu,\eta,\varepsilon}^{(0)}$ in \eqref{eq:conjugation}, where the action over the zero mode is decoupled from the one over the non-zero mode. This is the content of Proposition \ref{prop:fixed_point}. For convenience, in the following lemma, we further study the normal form  $\tL_{\nu,\eta,\varepsilon}^{(0)}$ that we obtained.
\begin{lem}\label{lem.cQ0.resist}
Let $U\in C^{S+2}$, $S\geq 0$.
    The matrix $\tL_{\nu,\eta,\varepsilon}^{(0)}$ in \eqref{eq:conjugation} is given by
    \begin{equation}\label{matricina}
        \tL_{\nu,\eta,\varepsilon}^{(0)} =  \begin{bmatrix}
            M_{\nu,\eta}^{(0)}(\varepsilon) & \boldsymbol{0}^\top \\ \boldsymbol{0} & \cL_{\nu,\eta,\varepsilon}^{(0)}
        \end{bmatrix}  \,,
    \end{equation}
    where $M_{\nu,\eta}^{(0)}(\varepsilon)$ is as in \eqref{eq:1st_entrance_resist} and the operator
    \begin{equation}
      \cL_{\varepsilon}^{(0)} :=  \cL_{\nu,\eta,\varepsilon}^{(0)} := \cN_{\nu,\eta,\varepsilon}^{(0)} ({\rm Id}_\perp - T_3T_2)^{-1} = \big( \tL_{\perp\perp} - \tL_{\perp 0} T_2 + T_3 \tL_{0\perp} - T_3 \tL_{00} T_2 \big) ({\rm Id}_\perp - T_3T_2)^{-1} 
    \end{equation}
    can be rewritten as
    	\begin{equation}\label{cL.eps.0}
		\cL_{\varepsilon}^{(0)} = \tD_{\perp\perp} + \cQ_{\varepsilon}^{(0)} \,,
	\end{equation}
    with $\tD_{\perp\perp}$ as in \eqref{D.perperp} and $\cQ_{\varepsilon}^{(0)} \in \cB(\bH_0^s)$ satisfying, for $0\leq s\leq S$,
    \begin{equation}\label{cQ.eps.est}
        \| \cQ_{\varepsilon}^{(0)} \|_{\cB(\bH_0^s)} \lesssim \varepsilon\,.
    \end{equation}
\end{lem}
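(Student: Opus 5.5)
The block-diagonal form \eqref{matricina} is already contained in Proposition \ref{prop:fixed_point}. There $T_2,T_3$ were chosen precisely so that the off-diagonal entries $\cB_{\nu,\eta,\varepsilon}$ and $\cC_{\nu,\eta,\varepsilon}$ in \eqref{gerry1} vanish. So \eqref{eq:conjugation} reduces to the product of the block-diagonal matrix ${\rm diag}(\cM^{(0)}_{\nu,\eta,\varepsilon},\cN^{(0)}_{\nu,\eta,\varepsilon})$ with ${\rm diag}(({\rm Id}_0-T_2T_3)^{-1},({\rm Id}_\perp-T_3T_2)^{-1})$. This gives the upper-left block $M^{(0)}_{\nu,\eta}(\varepsilon)$ as defined in \eqref{eq:1st_entrance_resist}, and the lower-right block $\cL^{(0)}_\varepsilon=\cN^{(0)}_{\nu,\eta,\varepsilon}({\rm Id}_\perp-T_3T_2)^{-1}$, with $\cN^{(0)}$ given by \eqref{enne0}. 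The inverses exist by Remark \ref{rmk:order_T2.T3}.

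For \eqref{cL.eps.0}, I define $\cQ^{(0)}_\varepsilon:=\cL^{(0)}_\varepsilon-\tD_{\perp\perp}$. I use $\tL_{\perp\perp}=\tD_{\perp\perp}+\tR_{\perp\perp}$ and write
\[
({\rm Id}_\perp-T_3T_2)^{-1}={\rm Id}_\perp+T_3T_2({\rm Id}_\perp-T_3T_2)^{-1},
\]
which gives
\[
\cQ^{(0)}_\varepsilon=\big(\tR_{\perp\perp}-\tL_{\perp0}T_2+T_3\tL_{0\perp}-T_3\tL_{00}T_2\big)({\rm Id}_\perp-T_3T_2)^{-1}+\tD_{\perp\perp}T_3T_2({\rm Id}_\perp-T_3T_2)^{-1}.
\]
In the first group every term is bounded on $\bH^s_0$ by Lemma \ref{espandi.e.non.mollo} and \eqref{basicestT}:
\[
\|\tR_{\perp\perp}\|\lesssim\varepsilon,\qquad \|\tL_{\perp0}T_2\|\lesssim\varepsilon^{-1}\varepsilon^{3},\qquad \|T_3\tL_{0\perp}\|\lesssim\varepsilon^{-1}\varepsilon^3,\qquad \|T_3\tL_{00}T_2\|\lesssim\varepsilon^{-1}\varepsilon\,\varepsilon^3.
\]
The factor $({\rm Id}_\perp-T_3T_2)^{-1}$ is bounded by a Neumann series, since $\|T_3T_2\|_{\cB(\bH^s_0)}\lesssim\varepsilon^2$. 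The first group is therefore $O(\varepsilon)$.

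The main obstacle is the last term, because $\tD_{\perp\perp}$ is unbounded: it loses two derivatives. The point is that $T_3$ actually maps $\C^2$ into $\bH_0^{s+2}$, not only into $\bH_0^s$. The leading part $\varepsilon^{-1}T_3^{(-1)}=\varepsilon^{-1}\tD_{\perp\perp}^{-1}\tL^{(-1)}_{\perp0}$ gives $\tD_{\perp\perp}T_3^{(-1)}=\tL^{(-1)}_{\perp0}=O(1)$ in $\cB(\C^2,\bH^s_0)$, using $U''\in C^S$. For the remainder, the fixed-point equation for $T_3^{(0)}$ gives
\[
T^{(0)}_3=\Psi_0^{-1}\big(-F-\Psi_1(T_3^{(0)})+T_3^{(0)}\tL_{0\perp}T_3^{(0)}\big)=-\tD_{\perp\perp}^{-1}(\cdots),
\]
and the bracket is $O(1)$ in $\cB(\C^2,\bH^s_0)$ by \eqref{stima F} and \eqref{stima Phi 1 nel lemma}. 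Hence $\|\tD_{\perp\perp}T_3^{(0)}\|_{\cB(\C^2,\bH^s_0)}\lesssim1$. Equivalently, I could use the homological identity \eqref{Lpp.T3}, which gives $\tD_{\perp\perp}T_3=T_3\tL_{00}-T_3\tL_{0\perp}T_3+\tL_{\perp0}-\tR_{\perp\perp}T_3$; this is bounded by $\varepsilon^{-1}$ directly. Then
\[
\|\tD_{\perp\perp}T_3T_2\|_{\cB(\bH^s_0)}\lesssim\varepsilon^{-1}\varepsilon^3=\varepsilon^2,
\]
and the last term is also $O(\varepsilon)$. This proves \eqref{cQ.eps.est}.
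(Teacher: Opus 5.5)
Your proof is correct, but you take a longer route than the paper. The paper does not expand $({\rm Id}_\perp-T_3T_2)^{-1}$. Instead, it substitutes the homological equation \eqref{eq:T3}, in the form $\tL_{\perp0}=\tL_{\perp\perp}T_3+T_3\tL_{0\perp}T_3-T_3\tL_{00}$, directly into $\cN^{(0)}_{\nu,\eta,\varepsilon}$, which then factors exactly as
\[
\cN^{(0)}_{\nu,\eta,\varepsilon}=(\tL_{\perp\perp}+T_3\tL_{0\perp})({\rm Id}_\perp-T_3T_2).
\]
The inverse therefore cancels, giving $\cL^{(0)}_\varepsilon=\tL_{\perp\perp}+T_3\tL_{0\perp}$, i.e. $\cQ^{(0)}_\varepsilon=\tR_{\perp\perp}+T_3\tL_{0\perp}$, and the bound $\varepsilon+\varepsilon^{-1}\varepsilon^3$ is immediate. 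This way the paper never composes the unbounded $\tD_{\perp\perp}$ with $T_3$, and the remainder contains no $T_2$ at all.

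Your decomposition leaves the term $\tD_{\perp\perp}T_3T_2({\rm Id}_\perp-T_3T_2)^{-1}$. That forces you to prove the extra smoothing property $T_3\in\cB(\C^2,\bH_0^{s+2})$ with $\|\tD_{\perp\perp}T_3\|_{\cB(\C^2,\bH^s_0)}\lesssim\varepsilon^{-1}$. Both of your justifications of this are valid. The second one uses exactly the identity \eqref{Lpp.T3} that drives the paper's argument, and substituting it into your formula collapses your $\cQ^{(0)}_\varepsilon$ to the paper's $\tR_{\perp\perp}+T_3\tL_{0\perp}$.

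What your route buys is that it makes explicit that $T_3$ gains two derivatives. The paper leaves this implicit, even though the homological equation $\tL_{\perp\perp}T_3=\dots$ with right-hand side in $\bH^s_0$ already requires it. The paper's route buys a one-line estimate and a cleaner formula for $\cQ^{(0)}_\varepsilon$ that does not involve $T_2$.
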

\begin{proof}
    From the equation solved by $T_3$ in \eqref{eq:T3}, we have
    \begin{equation}
        \tL_{\perp0} = \tL_{\perp\perp}T_3 + T_3\tL_{0\perp}T_3 - T_3\tL_{00}.
    \end{equation}
   We insert it into \eqref{gerry1} to compute
        \begin{align}
            \cN_{\nu,\eta,\varepsilon}^{(0)} &= -T_3\tL_{00}T_2 + T_3\tL_{0\perp} - \tL_{\perp0}T_2 + \tL_{\perp\perp} \\
            &= -T_3\tL_{00}T_2 + T_3\tL_{0\perp} - (\tL_{\perp\perp}T_3 + T_3\tL_{0\perp}T_3 - T_3\tL_{00})T_2 + \tL_{\perp\perp} \\
            &= ( \tL_{\perp\perp} + T_3 \tL_{0\perp})({\rm Id}_\perp-T_3T_2) \,,
        \end{align}
    from which we obtain, recalling \eqref{Lpp},
    \begin{align}
        \cL_{\varepsilon}^{(0)} & =  ( \tL_{\perp\perp} + T_3 \tL_{0\perp})({\rm Id}_\perp-T_3T_2)  ({\rm Id}_\perp-T_3T_2) ^{-1} \\
        & =  \tL_{\perp\perp} + T_3 \tL_{0\perp}   = \tD_{\perp\perp} + \cQ_{\varepsilon}^{(0)} \,, \quad \cQ_{\varepsilon}^{(0)} := \tR_{\perp\perp} + T_3 \tL_{0\perp}  \,,\label{cQ.eps}
    \end{align}
    which proves \eqref{cL.eps.0}. Finally, we estimate $\cQ_{\varepsilon}^{(0)}$ in \eqref{cQ.eps}, recalling \eqref{esti.easy}, \eqref{Lpp.eps}, \eqref{eq:estim_order_T3_resist} and Lemma \ref{espandi.e.non.mollo}, by
    \begin{equation}
        \| \cQ_{\varepsilon}^{(0)} \|_{\cB(\bH_0^s)} \leq \| \tR_{\perp\perp} \|_{\cB(\bH_0^s)} + \| T_3 \|_{\cB(\C^2,\bH_0^s)} \| \tL_{0\perp}\|_{\cB(\bH_0^s,\C^2)} \lesssim \varepsilon + \varepsilon^{-1} \varepsilon^3 \lesssim \varepsilon \,,
    \end{equation}
    which proves the estimate \eqref{cQ.eps.est}. 
\end{proof}

    

The goal of this section is to study the spectrum of ${\mathcal L}_\e^{(0)}$, and in particular to show that it exhibits stability. 
To this end, it is useful to diagonalize the leading operator $\tD_{\perp\perp}$ using Lemma \ref{lem.eigen.D.perp.resist-iii}: indeed, we proved that there exists a bounded and invertible map ${\mathcal C} \in  {\mathcal B}(\bH_0^s)$ with inverse ${\mathcal C}^{- 1} \in {\mathcal B}(\bH_0^s)$ such that 
$$
{\mathcal C}^{- 1} {\mathtt D}_{\perp \perp} {\mathcal C} = \Lambda \,, 
$$
where $\Lambda $ is defined in \eqref{def Lambda} of Lemma \ref{lem.eigen.D.perp.resist-iii}-$(iii)$. Therefore, one has that 
\begin{equation}\label{def mathcal L epsilon 1}
{\mathcal P}_\e : = {\mathcal C}^{- 1} {\mathcal L}_\e^{(0)} {\mathcal C} = \Lambda + {\mathcal E}_\e , \quad {\mathcal E}_\e := {\mathcal C}^{- 1} {\mathcal Q}_\e^{(0)} {\mathcal C}\,.
\end{equation}
Clearly by Lemma \ref{lem.cQ0.resist} and since $\| {\mathcal C}^{\pm 1} \|_{{\mathcal B}(\bH_0^s)} \lesssim 1$, one has that 
\begin{equation}\label{stima mathcal Q epsilon (1)}
\| {\mathcal E}_\e \|_{{\mathcal B}(\bH_0^s)} \lesssim \e, \quad \forall \, 0\leq s \leq S \,,
\end{equation}
provided that $U\in C^{S+2}(\T)$.
In the following lemma, we show that spectrum of $\cP_{\varepsilon}$ in \eqref{def mathcal L epsilon 1} is pure point and stable, namely all its eigenvalues have strictly negative real part.

\begin{lem}\label{spettro mathcal L epsilon (1)}
Let $U\in C^{S+2}$, $S\geq 0$. There exists $\varepsilon_0\equiv \varepsilon_0(S,\nu,\eta,{\rm b}_2) \ll 1$ small enough such that,  for $\varepsilon \in (0,\varepsilon_0)$ and for any $s \in [0,S]$, the following hold:
\\[1mm]
\noindent $(i)$
The spectrum $\sigma({\mathcal P}_\e) = (\lambda_n)_{n \geq 0}$ of the operator ${\mathcal P}_\e : \bH_0^{s + 2}  \to \bH_0^{s}$ is discrete and $\lambda_n \to + \infty$ as $n \to + \infty$;
\\[1mm]
\noindent $(ii)$
For any $n \geq 0$, one has that 
${\rm Re}(\lambda_n) \leq - \frac{\sigma}{2}$ where $\sigma> 0$ is the constant appearing in Lemma \ref{lem.eigen.D.perp.resist-iii}-$(i)$. 
\end{lem}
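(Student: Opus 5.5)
The plan is a perturbation argument for the sectorial diagonal operator $\Lambda$, perturbed by the bounded operator $\mathcal E_\varepsilon$, which is small in $\mathcal B(\bH_0^s)$ by \eqref{stima mathcal Q epsilon (1)}. Two facts from Lemma \ref{lem.eigen.D.perp.resist-iii} drive everything: the Fourier-multiplier structure of $\Lambda$, and the bound ${\rm Re}\,\lambda_\pm(j)\le -\sigma |j|^2 \le -\sigma$.

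For item $(i)$, note that $\Lambda^{-1}\in\mathcal B(\bH_0^s,\bH_0^{s+2})$. Composed with the compact embedding $\bH_0^{s+2}\hookrightarrow \bH_0^s$, this makes $\Lambda^{-1}$ compact on $\bH_0^s$.

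Next I write $\mathcal P_\varepsilon = \Lambda({\rm Id}+\Lambda^{-1}\mathcal E_\varepsilon)$. Since $\|\Lambda^{-1}\mathcal E_\varepsilon\|_{\mathcal B(\bH_0^s)}\le \sigma^{-1}C\varepsilon<\tfrac12$ for $\varepsilon$ small, the Neumann series shows $\mathcal P_\varepsilon:\bH_0^{s+2}\to\bH_0^s$ is invertible. Its inverse $({\rm Id}+\Lambda^{-1}\mathcal E_\varepsilon)^{-1}\Lambda^{-1}$ is compact on $\bH_0^s$.

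Consequently the resolvent of $\mathcal P_\varepsilon$ is compact. The spectrum therefore consists only of isolated eigenvalues of finite multiplicity, accumulating only at infinity, so $|\lambda_n|\to\infty$. This is the intended meaning of ``$\lambda_n\to+\infty$''. For the literal statement, the sector estimate below also gives ${\rm Re}\,\lambda_n\to-\infty$.

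For item $(ii)$, I take $z$ with ${\rm Re}\,z> -\sigma/2$ and show $\mathcal P_\varepsilon - z$ is invertible. First, $\Lambda - z$ is a diagonal multiplier with entries $\lambda_\pm(j)-z$, and these satisfy
\[
|\lambda_\pm(j)-z|\ge {\rm Re}\,z-{\rm Re}\,\lambda_\pm(j)\ge \sigma|j|^2-\tfrac{\sigma}{2}\ge \tfrac{\sigma}{2}.
\]
Hence $\|(\Lambda-z)^{-1}\|_{\mathcal B(\bH_0^s)}\le 2/\sigma$, uniformly in such $z$. Then I write
\[
\mathcal P_\varepsilon - z=(\Lambda-z)\big({\rm Id}+(\Lambda-z)^{-1}\mathcal E_\varepsilon\big),
\]
and the perturbation has norm at most $2C\varepsilon/\sigma<1$ once $\varepsilon\le\varepsilon_0(S,\nu,\eta,{\rm b}_2)$. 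So $z\in\rho(\mathcal P_\varepsilon)$, and every eigenvalue satisfies ${\rm Re}\,\lambda_n\le-\sigma/2$. Since the bound is uniform in $z$ and independent of $s\in[0,S]$, the same $\varepsilon_0$ works for all $s$.

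The main step to verify is the uniform resolvent bound for $\Lambda - z$ on $\bH_0^s$. The bounds $\|\mathcal C^{\pm1}\|\lesssim 1$ are already absorbed into \eqref{stima mathcal Q epsilon (1)}, so what remains is the multiplier bound, which is immediate since $\Lambda$ is diagonal in Fourier. A secondary check is that the eigenvalues of $\mathcal P_\varepsilon$ do not depend on $s$. This follows because the spectrum is discrete and eigenfunctions are smooth by elliptic regularity: an eigenfunction in $\bH_0^s$ lies in $\bH_0^{s+2}$, up to regularity $S$.
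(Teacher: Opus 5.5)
Your proof is correct. Item (i) matches the paper: a Neumann series for $\mathrm{Id}+\Lambda^{-1}\mathcal{E}_\varepsilon$, then compactness of $\mathcal{P}_\varepsilon^{-1}$ through $\bH_0^{s+2}\hookrightarrow\bH_0^{s}$. Item (ii) is proved by a different mechanism.

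The paper argues on eigenfunctions. It normalizes $\varphi_n$ in $\bL^2$, takes real parts in $\lambda_n=\langle\Lambda\varphi_n,\varphi_n\rangle_{\bL^2}+\langle\mathcal{E}_\varepsilon\varphi_n,\varphi_n\rangle_{\bL^2}$, and uses $\mathrm{Re}\,\lambda_\pm(j)\le-\sigma|j|^2$ together with $\|\mathcal{E}_\varepsilon\|_{\mathcal{B}(\bL^2_0)}\lesssim\varepsilon$. This numerical-range argument is shorter, needs only the $s=0$ bound on $\mathcal{E}_\varepsilon$, and is automatically independent of $s$. However, it only localizes eigenvalues, so it relies on item (i) to know that they exhaust the spectrum.

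You instead invert $\mathcal{P}_\varepsilon-z$ directly for every $z$ with $\mathrm{Re}\,z>-\sigma/2$, using that $\Lambda-z$ is a diagonal Fourier multiplier. This does not use discreteness and works directly in each $\bH_0^s$. It also gives more than spectral information: a bound on $\|(\mathcal{P}_\varepsilon-z)^{-1}\|_{\mathcal{B}(\bH_0^s)}$ that is uniform on that half-plane. That is the natural input for a Gearhart--Pr\"uss proof of the decay in Lemma \ref{stability dynamics}, which the paper obtains separately by a Duhamel fixed point.

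One small point: your aside that $\mathrm{Re}\,\lambda_n\to-\infty$ does not follow from the half-plane estimate as you stated it. It does follow from the same computation once you use $\|(\Lambda-z)^{-1}\|_{\mathcal{B}(\bH_0^s)}\le \mathrm{dist}(z,\{\lambda_\pm(j)\})^{-1}$. This places $\sigma(\mathcal{P}_\varepsilon)$ within distance $C\varepsilon$ of $\{\lambda_\pm(j)\}$, a set whose intersection with any half-plane $\{\mathrm{Re}\,z\ge -M\}$ is bounded; combined with $|\lambda_n|\to\infty$, this gives $\mathrm{Re}\,\lambda_n\to-\infty$.
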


\begin{proof}
We start with the proof of item $(i)$.
We shall prove first that ${\mathcal P}_\e :\bH_0^{s + 2} \to \bH_0^s$ is invertible. We write 
\begin{equation}\label{cP.e1}
    {\mathcal P}_\e = \Lambda \big( {\rm Id}_\bot + \Lambda^{- 1} {\mathcal E}_\e \big)\,.
\end{equation}
By \eqref{stima mathcal Q epsilon (1)} and Lemma \ref{lem.eigen.D.perp.resist-iii}-$(iii)$, we have that
$$
\| \Lambda^{- 1} {\mathcal E}_\e \|_{{\mathcal B}(\bH_0^s)} \leq \| \Lambda^{- 1} \|_{{\mathcal B}(\bH_0^s, \bH_0^{s + 2})} \|{\mathcal E}_\e \|_{\cB(\bH_0^s)} \lesssim \e \sigma^{- 1} \,.
$$
Hence, for $C(S)\e \sigma^{- 1} \ll 1$ small enough, by a Neumann series argument we obtain that $ {\rm Id}_0 + \Lambda^{- 1} {\mathcal E}_\e  : \bH_0^s \to \bH_0^s$ is invertible, with  
$$
\big\| \big( {\rm Id}_\bot + \Lambda^{- 1} {\mathcal E}_\e \big)^{- 1}\big\|_{{\mathcal B}(\bH_0^s)} \leq 2\,. 
$$
Therefore, we deduce that $\cP_\varepsilon$ in \eqref{cP.e1} is invertible as well, with
$$
{\mathcal P}_\e^{- 1} = \big( {\rm Id}_\bot + \Lambda^{- 1} {\mathcal E}_\e \big)^{- 1} \Lambda^{- 1}:\bH_0^{s} \to \bH_0^{s+2} \,,
$$
satisfying
$$
\| {\mathcal P}_\e^{- 1} \|_{{\mathcal B}(\bH_0^s, \bH_0^{s + 2})} \leq \big\| \big( {\rm Id}_\bot + \Lambda^{- 1} {\mathcal E}_\e \big)^{- 1} \big\|_{{\mathcal B}(\bH_0^{s + 2})} \| \Lambda^{- 1} \|_{{\mathcal B}(\bH_0^s, \bH_0^{s + 2})} \leq 2 \sigma^{- 1} \,.
$$
Since $\bH_0^{s + 2}$ is compactly embedded in $\bH^s_0$, we deduce that ${\mathcal P}_\e^{- 1} : \bH^s_0 \to \bH^s_0$ is compact, implying that the spectrum of ${\mathcal P}_\e$ consists of a sequence of eigenvalues with finite multiplicity $(\lambda_n)_{n \geq 0}$, with $\lambda_n \to + \infty$ as $n \to + \infty$.  

We now prove item $(ii)$.
For any $n\geq 0$, we take an eigenvalue $\lambda_n \in \sigma(\cP_\varepsilon)$ with a corresponding eigenfunction $\varphi_n = (\varphi_{n, +}, \varphi_{n, -}) \in \bH^{s + 2}_0$ that is normalized in $\bL^2$, that is $\| \varphi_n \|_{\bL^2} = 1$. One has that 
\begin{align}
& \langle \Lambda \varphi_n\,,\, \varphi_n \rangle_{\bL^2} + \langle {\mathcal E}_\e \varphi_n\,,\, \varphi_n \rangle_{\bL^2} = \lambda_n \quad \text{and} \quad  \overline{\langle \Lambda \varphi_n\,,\, \varphi_n \rangle_{\bL^2}} + \overline{\langle {\mathcal E}_\e \varphi_n\,,\, \varphi_n \rangle_{\bL^2}} = \overline \lambda_n
\end{align}
implying that 
\begin{equation}\label{Re.lambda}
    {\rm Re}(\lambda_n) = \frac{\lambda_n + \overline \lambda_n}{2} = \dfrac{\langle \Lambda \varphi_n\,,\, \varphi_n \rangle_{\bL^2} + \overline{\langle \Lambda \varphi_n\,,\, \varphi_n \rangle_{\bL^2}} + \langle {\mathcal E}_\e \varphi_n\,,\, \varphi_n \rangle_{\bL^2} + \overline{\langle {\mathcal E}_\e \varphi_n\,,\, \varphi_n \rangle_{\bL^2}}}{2}\,. 
\end{equation}
Recalling Lemma \ref{lem.eigen.D.perp.resist-iii}-$(iii)$, we have that
$$
\Lambda \varphi_n = \sum_{j \in \Z \setminus \{ 0 \}} \Lambda(j)[\widehat \varphi_n(j)] e^{\im j y} =  \begin{pmatrix}\sum_{j \in \Z \setminus \{ 0 \}} \lambda_+(j)\widehat \varphi_{n, +}(j) e^{\im j y}   \\ \sum_{j \in \Z \setminus \{ 0 \}} \lambda_-(j)\widehat \varphi_{n, -}(j) e^{\im j y} 
\end{pmatrix}
$$
which implies that
$$
\begin{aligned}
& \langle \Lambda \varphi_n\,,\, \varphi_n \rangle_{\bL^2} = \sum_{j \in \Z \setminus \{ 0 \}} \lambda_+(j)|\widehat \varphi_{n, +}(j)|^2  + \sum_{j \in \Z \setminus \{ 0 \}} \lambda_-(j)|\widehat \varphi_{n, -}(j)|^2   \,, \\
& \overline{\langle \Lambda \varphi_n\,,\, \varphi_n \rangle_{\bL^2}} = \sum_{j \in \Z \setminus \{ 0 \}} \overline{\lambda_+(j)}|\widehat \varphi_{n, +}(j)|^2  + \sum_{j \in \Z \setminus \{ 0 \}} \overline{\lambda_-(j)}|\widehat \varphi_{n, -}(j)|^2 \,.
\end{aligned}
$$
Consequently, using by Lemma \ref{lem.eigen.D.perp.resist-iii}-$(i)$ and the normalization  $\| \varphi_n \|_{\bL^2} = 1$, we get 
\begin{align}
\frac{\langle \Lambda \varphi_n\,,\, \varphi_n \rangle_{\bL^2} + \overline{\langle \Lambda \varphi_n\,,\, \varphi_n \rangle_{\bL^2}} }{2} & = \sum_{j \in \Z \setminus \{ 0 \}} {\rm Re}(\lambda_+(j))|\widehat \varphi_{n, +}(j)|^2  + \sum_{j \in \Z \setminus \{ 0 \}} {\rm Re}(\lambda_-(j))|\widehat \varphi_{n, -}(j)|^2 \\
& \leq - \sigma \sum_{j \in \Z^2 \setminus \{ 0 \}} |j|^2 \big( |\widehat \varphi_{n, +}(j)|^2 + |\widehat \varphi_{n, -}(j)|^2 \big)  \label{stima Re lambdan primo pezzo} \\
& \leq - \sigma \| \varphi_n \|_{\bH_0^1}^2 \leq - \sigma \| \varphi_n \|_{\bL^2}^2 \leq - \sigma\,. 
\end{align}
Moreover, by the Cauchy Schwartz inequality and using that $\| {\mathcal E}_\e \|_{{\mathcal B}({\mathcal L}^2_0)} \lesssim \e$ as in \eqref{stima mathcal Q epsilon (1)}, one gets that
\begin{equation}\label{stima Re lambda n sec pezzo}
\begin{aligned}
\frac{\langle {\mathcal E}_\e \varphi_n\,,\, \varphi_n \rangle_{\bL^2} + \overline{\langle {\mathcal E}_\e \varphi_n\,,\, \varphi_n \rangle_{\bL^2}}}{2} &  \leq |\langle {\mathcal E}_\e \varphi_n\,,\, \varphi_n \rangle_{\bL^2} | \\
& \leq \| {\mathcal E}_\e \varphi_n \|_{\bL^2} \| \varphi_n \|_{\bL^2} \leq C \e \| \varphi_n \|_{\bL^2}^2 \leq C \varepsilon
\end{aligned}
\end{equation}
for some constant $C > 0$. 
Finally, collecting \eqref{stima Re lambdan primo pezzo}, \eqref{stima Re lambda n sec pezzo} into \eqref{Re.lambda}, we obtain that
$$
{\rm Re}(\lambda_n) \leq - \sigma + C \e \leq - \frac{\sigma}{2}
$$
by taking $0 < \e \ll 1$ small enough. The proof of the lemma is then concluded. 
\end{proof}

We now consider the Cauchy problem 
\begin{equation}\label{cauchy stability}
\begin{cases}
\partial_t u = {\mathcal L}_\e^{(0)} u \\
u(0, y) = \varphi(y) \in \bH_0^s \,.
\end{cases}
\end{equation}
In the following proposition, we show that we have not only the spectral stability shown in Lemma \ref{spettro mathcal L epsilon (1)}-$(ii)$, but also  the stability of the evolution under the linear operator $\cL_{\varepsilon}^{(0)}$ in \eqref{cL.eps.0} of Lemma \ref{lem.cQ0.resist}.
\begin{lem}\label{stability dynamics}
Let $U\in C^{S+2}$, $S\geq 0$. There exists $\e_0 \equiv \e_0(S, \nu, \eta, {\rm b}_2) \in (0, 1)$ such that, for any $\e \in (0, \e_0)$, $s \in [0,S]$ and for any $\varphi \in \bH^s_0$, there exists a unique solution $u \in {\mathcal C}^0([0, + \infty), \bH^s_0)$ of \eqref{cauchy stability} satisfying 
$$
\| u(t) \|_s \lesssim e^{- \frac{\sigma}{2} t} \| \varphi \|_{s}\,, \quad \forall \, t \geq 0\,. 
$$
\end{lem}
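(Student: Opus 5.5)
The plan is to work in the diagonalized coordinates of Lemma \ref{spettro mathcal L epsilon (1)} and obtain exponential decay by an energy estimate, using that the leading part $\Lambda$ is dissipative with rate $\sigma|j|^2$ and the perturbation $\mathcal E_\e$ is bounded of size $O(\e)$. Setting $u = \mathcal C v$, the Cauchy problem \eqref{cauchy stability} is equivalent to
\begin{equation}
\partial_t v = \mathcal P_\e v = \Lambda v + \mathcal E_\e v, \qquad v(0) = \mathcal C^{-1}\varphi,
\end{equation}
and since $\|\mathcal C^{\pm1}\|_{\mathcal B(\bH^s_0)}\lesssim 1$ (Lemma \ref{lem.eigen.D.perp.resist-iii}-$(iii)$) it suffices to prove $\|v(t)\|_s \lesssim e^{-\frac{\sigma}{2}t}\|v(0)\|_s$.

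\textbf{Existence and uniqueness.} $\Lambda$ is a Fourier multiplier whose symbols satisfy ${\rm Re}\,\lambda_\pm(j)\le -\sigma|j|^2$. Hence it generates the explicit contraction semigroup
\begin{equation}
e^{t\Lambda}v=\sum_{j\neq0} e^{t\Lambda(j)}\widehat v(j)e^{\im jy}, \qquad \|e^{t\Lambda}\|_{\mathcal B(\bH^s_0)}\le e^{-\sigma t}.
\end{equation}
Since $\mathcal E_\e\in\mathcal B(\bH^s_0)$, the bounded perturbation theorem (equivalently, a Picard iteration on the Duhamel formula $v(t)=e^{t\Lambda}v_0+\int_0^t e^{(t-\tau)\Lambda}\mathcal E_\e v(\tau)\,d\tau$ in $\mathcal C^0([0,T],\bH^s_0)$) gives a unique mild solution for all times.

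\textbf{Decay estimate.} Work with the Sobolev norm $\|v\|_s^2=\sum_{j\ne0}\langle j\rangle^{2s}|\widehat v(j)|^2$, which commutes with Fourier multipliers. First take $\varphi$ smooth (in $\bH^{s+2}_0$), so that $v$ is a classical solution. Then
\begin{equation}
\tfrac12\tfrac{d}{dt}\|v\|_s^2={\rm Re}\langle \langle D\rangle^{s}\Lambda v,\langle D\rangle^s v\rangle+{\rm Re}\langle \langle D\rangle^s\mathcal E_\e v,\langle D\rangle^s v\rangle .
\end{equation}
The first term is at most $-\sigma\|v\|_s^2$, exactly as in \eqref{stima Re lambdan primo pezzo} but with weights $\langle j\rangle^{2s}$. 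The second is at most $\|\mathcal E_\e\|_{\mathcal B(\bH^s_0)}\|v\|_s^2\le C\e\|v\|_s^2$ by \eqref{stima mathcal Q epsilon (1)}. Choosing $C\e\le\sigma/2$, Gronwall gives $\|v(t)\|_s\le e^{-\frac\sigma2 t}\|v(0)\|_s$. Density of $\bH^{s+2}_0$ in $\bH^s_0$ and continuity of the semigroup then extend the bound to all $\varphi\in\bH^s_0$. Alternatively, one can skip the regularization and run Gronwall directly on the Duhamel formula: $e^{\sigma t}\|v(t)\|_s\le\|v_0\|_s+C\e\int_0^t e^{\sigma\tau}\|v(\tau)\|_s\,d\tau$.

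\textbf{Main obstacle.} The only delicate point is justifying the energy identity at $\bH^s$ regularity, since $\Lambda$ is unbounded. I would handle it through the Duhamel/Gronwall route, which needs only the explicit multiplier bound on $e^{t\Lambda}$ and the boundedness of $\mathcal E_\e$, so no regularization is required. Finally, transferring back through $\mathcal C$ costs only a constant depending on $(\nu,\eta,{\rm b}_2)$, which is absorbed in $\lesssim$.
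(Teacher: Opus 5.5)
Your argument is correct and uses the same two ingredients as the paper. After conjugating by $\mathcal{C}$, the diagonal propagator satisfies $\|e^{t\Lambda}\|_{\mathcal{B}({\bf H}^s_0)}\le e^{-\sigma t}$, and the perturbation satisfies $\|\mathcal{E}_\varepsilon\|_{\mathcal{B}({\bf H}^s_0)}\lesssim\varepsilon$. You differ in how you exploit the Duhamel formula.

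\textbf{The paper's route.} It puts the decay into the function space. It shows that the Duhamel map is a contraction on the ball of radius $2\|w_0\|_s$ in the weighted space $\mathcal{A}_s$, whose norm is $\sup_{t\ge 0}e^{\frac{\sigma}{2}t}\|w(t)\|_s$. Global existence and the decay bound then come out of one fixed-point argument, with the smallness of $\varepsilon$ used to make the map contractive.

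\textbf{Your route.} You separate the two issues. Global well-posedness of mild solutions follows from the bounded perturbation theorem (or Picard iteration on $[0,T]$), which needs no smallness. The decay follows afterwards from Gronwall applied to $e^{\sigma t}\|v(t)\|_s$, or from the dissipativity ${\rm Re}\,\langle\Lambda v,v\rangle_{{\bf H}^s}\le-\sigma\|v\|_s^2$. Smallness enters only through $C\varepsilon\le\sigma/2$.

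\textbf{What each buys.} Your route gives uniqueness among all mild solutions in $C^0([0,+\infty),{\bf H}^s_0)$, which is what the lemma literally asserts. The contraction argument by itself only gives uniqueness inside the ball of $\mathcal{A}_s$. You also get the slightly better rate $\sigma-C\varepsilon$, with constant $1$ in the $v$ variables. The paper's route, in exchange, is fully self-contained and avoids semigroup theory and Gronwall.

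One small remark on your energy variant. For $\varphi\in{\bf H}^{s+2}_0$, the classical regularity should come from semigroup theory in $X={\bf H}^s_0$ with $D(\Lambda)={\bf H}^{s+2}_0$. It cannot come from boundedness of $\mathcal{E}_\varepsilon$ on ${\bf H}^{s+2}_0$, which is not available when $s+2>S$. Since you settle on the Duhamel/Gronwall version, this does not affect the proof.
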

\begin{proof}
Under the change of coordinates $u(t) = {\mathcal C} w(t)$ (see Lemma \ref{lem.eigen.D.perp.resist-iii}-$(ii)$), one gets that \eqref{cauchy stability} transforms into 
\begin{equation}\label{cauchy stability2}
\begin{cases}
\partial_t w = \Lambda w + {\mathcal E}_\e w \\
u(0, y) = w_0 (y) := \cC^{-1} \vf(y) \,.
\end{cases}
\end{equation}
Note that $\| u (t) \|_s \simeq \| w(t) \|_s$ by the continuity properties of ${\mathcal C}, {\mathcal C}^{- 1}$. Let $e^{\Lambda t}$, $t \geq 0$, be the propagator associated with the linear PDE $\partial_t w = \Lambda w$. For $w = (w_+, w_-) \in \bH^s_0$, this takes the form 
$$
e^{\Lambda t}w = \begin{pmatrix}
\sum_{j \in \Z \setminus \{ 0 \}} e^{\lambda_+(j) t} \widehat w_+(j) e^{\im j y } \\
\sum_{j \in \Z \setminus \{ 0 \}} e^{\lambda_-(j) t} \widehat w_-(j) e^{\im j y } \\
\end{pmatrix}\,.
$$
Since  ${\rm Re}(\lambda_\pm(j)) \leq - \sigma j^2 \leq - \sigma$ for any $j \in \Z \setminus \{ 0 \}$, one has that $|e^{\lambda_\pm(j) t}| \leq e^{- \sigma t}$ for any $t \geq 0$. Therefore 
\begin{equation}\label{stima propagatore lineare e lambda t}
\begin{aligned}
\| e^{\Lambda t} w \|_{s} & \leq \bigg( \sum_{j \in \Z \setminus \{ 0 \}} \langle j \rangle^{2 s} \big(  e^{2 \lambda_+(j) t} |\widehat w_+(j)|^2 + e^{2 \lambda_-(j) t} |\widehat w_-(j)|^2 \big)  \bigg)^{\frac12} \\
& \leq e^{- \sigma t} \Big( \sum_{j \in \Z \setminus \{ 0 \}} \langle j \rangle^{2 s}\big( |\widehat w_+(j)|^2 +  |\widehat w_-(j)|^2 \big)  \Big)^{\frac12} \leq e^{- \sigma t} \| w \|_s\,. 
\end{aligned}
\end{equation}
The Cauchy problem \eqref{cauchy stability2} is equivalent to the fixed point equation
\begin{equation}\label{fixed point modi stabili}
w = \Phi(w), \quad \Phi(w(t)) := e^{\Lambda t} w_0 + \int_0^t e^{\Lambda(t - \tau)}{\mathcal E}_\e[w(\tau)] \wrt \tau, \quad t \geq 0
\end{equation}
We define 
$$
{\mathcal A}_s := \Big\{ w \in {\mathcal C}^0([0, + \infty), \bH^s_0) : \| w \|_{{\mathcal A}^s} := \sup_{t \geq 0} e^{\frac{\sigma}{2} t} \| w(t) \|_s < + \infty \Big\}\,.
$$
Clearly ${\mathcal A}_s$ is a Banach space. We also define the closed ball
$$
{\mathcal B}_s(R) := \big\{ w \in {\mathcal A}_s : \| u \|_{{\mathcal A}_s} \leq R \big\}\,. 
$$
We now claim that, for any $0<\varepsilon \ll 1$ small enough, the map
\begin{equation}\label{claim.contr}
    \Phi : {\mathcal B}_s(2 \| w_0\|_s) \to {\mathcal B}_s(2 \| w_0\|_s) \quad \text{is a contraction.}
\end{equation}
We split the proof of this claim into steps.
\\[1mm]
\noindent {\bf Step 1.} First, we show that $\| e^{t \Lambda} w_0 \|_{{\mathcal A}^s} \leq \| w_0 \|_s$. Indeed, by \eqref{stima propagatore lineare e lambda t}, one has that
$$
\| e^{\Lambda t} w_0 \|_s \leq e^{- \sigma t} \| w_0 \|_s \,, \quad  \forall\, t \geq 0 \,,
$$
implying that 
\begin{equation}\label{stima lin prop As}
\| e^{t \Lambda} w_0 \|_{{\mathcal A}^s} = \sup_{t \geq 0} e^{\frac{\sigma }{2} t} \| e^{\Lambda t} w_0 \|_s \leq \| w_0 \|_s \,.
\end{equation}
\noindent
{\bf Step 2.} We define the linear operator ${\mathcal F}$ as 
\begin{equation}\label{mappa F}
    {\mathcal F}[w] := \int_0^t e^{\Lambda (t - \tau)} {\mathcal E}_\e [w(\tau)] \wrt \tau, \quad w \in {\mathcal C}^0([0, + \infty), \bH^s_0)\,. 
\end{equation}
We now want to show that 
\begin{equation}\label{prop cal F fixed point}
{\mathcal F} \in {\mathcal B}({\mathcal A}_s) \quad \text{and}  \quad \| {\mathcal F} \|_{\cB({\mathcal A}_s)} \leq C(S) \e\,, 
\end{equation}
for some $C(S)>0$ large and 
for $0 < \e \ll 1$ small enough. Let $w \in {\mathcal A}_s$, then, for any $t \geq 0$, one has 
\begin{align}
\Big\| \int_0^t e^{\Lambda(t - \tau)}{\mathcal E}_\e [w(\tau)] \wrt \tau \Big\|_s & \leq \int_0^t \Big\| e^{\Lambda(t - \tau)}{\mathcal E}_\e [w(\tau)] \Big\|_s \wrt \tau \\
& \stackrel{\eqref{stima propagatore lineare e lambda t}}{\leq} \int_0^t e^{- \sigma(t - \tau)} \|{\mathcal E}_\e [w(\tau)] \|_s \wrt \tau \\
& \stackrel{\eqref{stima mathcal Q epsilon (1)}}{\lesssim}  \e \int_0^t e^{- \sigma(t - \tau)} e^{- \frac{\sigma}{2} \tau}  (e^{\frac{\sigma}{2} \tau}\| w(\tau) \|_s) \wrt \tau \\
& \lesssim \e e^{- \sigma t} \int_0^t e^{\frac{\sigma}{2} \tau} \wrt \tau \| w \|_{{\mathcal A}_s} \\
& \lesssim \e e^{- \sigma t} \big( e^{\frac{\sigma}{2} t} - 1 \big) \| w \|_{{\mathcal A}^s}  \lesssim \e e^{- \frac{\sigma}{2} t} \| w \|_{{\mathcal A}_s} \,.
\end{align}
We conclude that
$$
\| {\mathcal F}[w] \|_{{\mathcal A}_s} = \sup_{t \geq 0} e^{\frac{\sigma}{2} t} \Big\|\int_0^t e^{\Lambda(t - \tau)}{\mathcal E}_\e [w(\tau)] \wrt \tau \Big\|_s \leq C(S) \e \| w \|_{{\mathcal A}_s} \,.
$$
{\bf Step 3. Conclusion.} Recalling \eqref{fixed point modi stabili} and \eqref{mappa F}, we write
$$
\Phi(w) := e^{\Lambda t} w_0 + {\mathcal F}(w)\,.
$$
If $w \in {\mathcal B}_s(2 \| w_0 \|_s)$, then, by \eqref{stima lin prop As} and \eqref{prop cal F fixed point},
\begin{equation}\label{contra.claim1}
    \| \Phi(w) \|_s \leq \| w_0 \|_s + C(S) \e \| w \|_{{\mathcal A}_s} \leq \| w_0 \|_s + 2 C(S) \e  \| w_0 \|_s \leq 2 \| w_0 \|_s
\end{equation}
for $0 < \e \ll 1$ small enough.
Moreover, if $w_1, w_2 \in {\mathcal B}_s(2 \| w_0 \|_s)$, by \eqref{prop cal F fixed point}, one has that 
\begin{equation}\label{contra.claim2}
    \| \Phi(w_1) - \Phi(w_2) \|_{{\mathcal A}_s} = \| {\mathcal F}(w_1) - {\mathcal F}(w_2) \|_{{\mathcal A}_s} \leq C(S) \e \| w_1 - w_2 \|_{{\mathcal A}_s} \leq \tfrac12 \| w_1 - w_2 \|_{{\mathcal A}_s}
\end{equation}
for $0 < \e \ll 1$ small enough. The estimates \eqref{contra.claim1} and \eqref{contra.claim2} prove that $\Phi$ is a contraction, as claimed in \eqref{claim.contr}. Therefore, by the contraction mapping theorem, we deduce that there exists a unique $w \in {\mathcal B}_s(2 \| w_0 \|_s)$ such that $w = \Phi(w)$. Clearly $\| w(t) \|_s \leq 2 e^{- \frac{\sigma}{2} t} \| w_0 \|_s $.  Finally, in terms of $u(t)= \cC w(t)$ solution of \eqref{cauchy stability}, we get that
$$
\| u(t) \|_s = \| {\mathcal C} w(t) \|_s \lesssim \| w(t) \|_s \lesssim e^{- \frac{\sigma}{2} t} \| w_0 \|_s \lesssim e^{- \frac{\sigma}{2} t} \| {\mathcal C}^{- 1} \varphi \|_s \lesssim e^{- \frac{\sigma}{2} t} \|  \varphi \|_s
$$
using that ${\mathcal C} : \bH^s_0 \to \bH^s_0$ is a linear isomorphism. The proof of the lemma is then concluded. 
\end{proof}

\section{Proof of the main results}\label{sect.conclusion}

We are finally in position to prove Theorem \ref{theo.princ} and Theorem \ref{thm:dinamico}. 
By Proposition \ref{prop:fixed_point}, recalling \eqref{matricina} in Lemma \ref{lem.cQ0.resist}, the Cauchy problem 
\begin{equation}\label{change.vTu}
        \begin{cases}
            \pa_{t} u(t) = \cL_{\nu,\eta,\varepsilon} u(t) \,, \\
            u(0) = \vf \in H^s \times H^s \,.
        \end{cases}
    \end{equation}
transforms, under the change of coordinates
\begin{equation}
   \begin{pmatrix}
        v_0(t) \\ v_\perp(t)
    \end{pmatrix} := \tT \begin{pmatrix}
        u_0(t) \\ u_\perp(t)
    \end{pmatrix}, \quad  \textnormal{with } \ \ \  u(t) = u_0(t) + u_\perp(t) \,, \quad u_0=\Pi_0 u \,, \quad u_\perp = \Pi_0^\perp u \,, 
\end{equation}
with $\tT$ as in \eqref{eq:ansatz_conjugation}, into the following two Cauchy problems
\begin{equation}\label{two.Cauchy}
    \begin{cases}
            \pa_{t} v_0(t) = M_{\nu,\eta}^{(0)} v_0(t) \,, \\
            v_0(0) = \phi_0  \in \C^2 \,, 
        \end{cases} \quad \quad \begin{cases}
            \pa_{t} v_\perp(t) = \cL_{\nu,\eta,\varepsilon}^{(0)} v_\perp(t) \,, \\
            v_{\perp}(0) = \phi_\perp \in \bH_0^s \,,
        \end{cases}
\end{equation}
where we also have the splitting of the initial datum
\begin{align}\label{initial.data.split}
    & \begin{pmatrix}
        \phi_0 \\ \phi_\perp
    \end{pmatrix} := \tT \begin{pmatrix}
        \vf_0 \\ \vf_\perp
    \end{pmatrix} \quad \Leftrightarrow \quad   \begin{cases}
        \phi_0 = \vf_0 + T_2 \vf_\perp \,, \\
        \phi_\perp = T_3 \vf_0 + \vf_\perp \,,
    \end{cases} \\
    & \quad \textnormal{with } \ \ \ \vf = \vf_0 + \vf_\perp  \,, \quad \vf_0=\Pi_0 \vf \,, \quad \vf_\perp = \Pi_0^\perp \vf \,.
\end{align}

\begin{proof}[Proof of Theorem \ref{theo.princ}]
    By Lemma  \ref{spettro mathcal L epsilon (1)} and inverting the conjugation in \eqref{def mathcal L epsilon 1},
one has that the spectrum of the operator $\cL_{\nu,\eta,\varepsilon}^{(0)} $ is pure point and each eigenvalue has negative real part.
By Proposition \ref{prop:positive.zeromode} and the invertibility of the map $\tT$ in Proposition \ref{prop:fixed_point}, we deduce the claims in Theorem \ref{theo.princ}.
\end{proof}

\begin{proof}[Proof of Theorem \ref{thm:dinamico}]
    We prove the claims in the three different cases:
    \\[1mm]
    \noindent $\blacktriangleright$ {\sc Case ${\rm b}_1\neq 0$, $\eta > \nu$ and $\la \tW_{\perp\perp}\pa_y^{-1}U,\pa_y^{-1}U\ra>\frac{(\nu+\eta)\nu\eta}{\eta-\nu}$.} 
    We define the sets
    \begin{align}
        \cU_\varepsilon & := \{ \vf=\vf_0 + \vf_\perp \in H^s \times H^s \, : \, T_3\vf_0 +  \vf_\perp =  0 \in \bH_0^s \}\,. \label{U.unstab.neq0} \\
        \cS_\varepsilon & := \{ \vf=\vf_0 + \vf_\perp \in H^s\times H^s \, : \,\vf_0 + T_2 \vf_\perp = 0 \in \C^2 \} \,.  \label{S.stab.neq0} 
    \end{align}
Together with the invertibility of the map $\tT$ in Proposition \ref{prop:fixed_point}-$(ii)$, we deduce that
\begin{equation}
    H^s \times H^s = \tT^{-1}(H^s \times H^s) = \tT^{-1} \big( \C^2 \oplus \bH_0^s \big) = \cU_{\varepsilon} \oplus \cS_{\varepsilon} \,,
\end{equation}
which shows \eqref{stable.unstable.dec}. We now prove the estimates on the evolution of the initial data:
\begin{itemize}
\item Let us assume that $\vf =\vf_0 + \vf_\perp \in \cU_{\varepsilon}$. In view of \eqref{U.unstab.neq0} and \eqref{initial.data.split},
the new initial conditions $(\phi_0,\phi_{\perp})$ satisfy the following: first we have
\begin{equation}\label{mandala11}
\begin{aligned}
\phi_{\perp}\stackrel{\eqref{initial.data.split}}{=}T_3 \varphi_0+\vf_\perp \stackrel{\eqref{U.unstab.neq0}}{=}0\,,
\end{aligned}
\end{equation}
which also implies 
\begin{equation}\label{mandala12}
\vf_\perp=-T_3 \varphi_0\,.
\end{equation}
Secondly, we have
\begin{equation}\label{mandala13}
\phi_0\stackrel{\eqref{initial.data.split}}{=}
\vf_0+T_2\vf_{\perp}\stackrel{\eqref{mandala12}}{=}
({\rm Id}_0 - T_2T_3) \vf_0 \in \C^2\,.
\end{equation}
Notice that, in view of Proposition \ref{prop:fixed_point}, Remark 
\ref{rmk:order_T2.T3} and \eqref{mandala13} we also have
\begin{equation}\label{mandala14}
|\phi_0|\simeq |\vf_0|\,.
\end{equation}
Moreover, the two Cauchy problem in 
\eqref{two.Cauchy}, with initial conditions
\eqref{mandala11}-\eqref{mandala13} respectively,
have solutions
\begin{align}
v_0(t)&={\rm exp}(M_{\nu,\eta}^{(0)}t)\phi_0\in \C^2\,,
\;\;\; \forall \, t\geq0\,,\label{sol.v0}
\\
v_{\perp}(t)&\equiv0\,,\;\;\;\quad\qquad\qquad   
\qquad \;\;\forall\, t\geq0\,,\label{sol.vperp}
\end{align}
Now, recalling  Proposition \ref{prop:positive.zeromode}-$(i)$, 
we have that the $2\times 2$ matrix $M_{\nu,\eta}^{(0)}$ in \eqref{eq:1st_entrance_resist} 
has two distinct eigenvalues 
$\mu_{+},\mu_{-}\in\C \setminus\{0\} $ 
satisfying ${\rm Re}(\mu_{\pm}) = \e^2 \mathtt c + O(\e^3) \geq \e^2\mathtt c /2$ for $0 < \e \ll 1$ small enough. 
Then, by applying Lemma \ref{dinamica sistemino due per due}-$(i)$, one has that the solution \eqref{sol.v0} 
satisfies the estimates
\begin{equation}\label{stima.v0}
    | v_0(t) | \gtrsim  e^{ \e^2 (\mathtt c /2) t} |\phi_0| , \quad \forall \, t\geq 0 \,, \ \ \phi_{0} \in \C^2 \,,
\end{equation}
for some constant $C_1>0$. 
 We now come back to the original variables to estimate 
 from below
 the norm of 
 the solution $u(t)$
 of \eqref{change.vTu}.
By  \eqref{eq:ansatz_conjugation} in Proposition \ref{prop:fixed_point}
we recall that
\[
 \begin{pmatrix}
        v_0(t) \\ v_\perp(t)
    \end{pmatrix} := \tT \begin{pmatrix}
        u_0(t) \\ u_\perp(t)
    \end{pmatrix}
    \stackrel{\eqref{eq:ansatz_conjugation}}{=}
    \begin{pmatrix}
    u_0(t)+T_2 u_{\perp}(t)
    \\
    T_3u_0(t)+u_{\perp}(t)
    \end{pmatrix}\,,
\]
which, by estimates \eqref{basicestT}, implies 
\begin{align*}
|v_0(t)|&\lesssim |u_0(t)|+\e^3\|u_{\perp}(t)\|_s\,,
\\
\|v_{\perp}(t)\|_{s}&\lesssim \e^{-1}|u_0(t)|+\|u_\perp(t)\|{_s}\,.
\end{align*}
 In conclusion we get
\begin{align}
    \| u(t)\|_{s} 
    & \simeq |u_0(t)| + \| u_{\perp}(t) \|_{s} 
    \gtrsim |u_0(t)| +\e^3\|u_\perp(t)\|_s
    \gtrsim |v_0(t)|
    \\
     & \stackrel{\eqref{stima.v0}}{\gtrsim} 
   e^{\e^2 (\mathtt c /2))t} |\phi_0| 
   \stackrel{\eqref{mandala14}}{\gtrsim}
e^{\e^2 (\mathtt c /2))t} |\vf_0|\,.
\end{align}
This  proves \eqref{unstable.est.thm} with 
$\mu = \mathtt c /2 > 0$ (see \eqref{explago1}).


\item 
We now  assume that $\vf =\vf_0 + \vf_\perp \in \cS_{\varepsilon}$. In view of \eqref{S.stab.neq0} and \eqref{initial.data.split},
the new initial conditions $(\phi_0,\phi_{\perp})$ satisfy the following: first we have
\begin{equation}\label{mandala11Bis}
\begin{aligned}
\phi_{0}\stackrel{\eqref{initial.data.split}}{=}\varphi_0+T_2\vf_\perp \stackrel{\eqref{S.stab.neq0}}{=}0\,,
\end{aligned}
\end{equation}
which also implies 
\begin{equation}\label{mandala12Bis}
\vf_0=-T_2 \varphi_\perp\,.
\end{equation}
Secondly, we have
\begin{equation}\label{mandala13Bis}
\phi_\perp\stackrel{\eqref{initial.data.split}}{=}
T_3\vf_0+\vf_{\perp}\stackrel{\eqref{mandala12Bis}}{=}
({\rm Id}_0 - T_3T_2) \vf_\perp \in {\bf H}_0^s\,.
\end{equation}
Notice that,
in view of Proposition \ref{prop:fixed_point}, Remark 
\ref{rmk:order_T2.T3} and \eqref{mandala13Bis} we also have
\begin{equation}\label{mandala14Bis}
\|\phi_\perp\|_{s}\simeq \|\vf_\perp\|_s\,.
\end{equation}

Therefore, the two Cauchy problem in 
\eqref{two.Cauchy}, with initial conditions
\eqref{mandala11Bis}-\eqref{mandala13Bis} respectively, have solutions,
using Lemma \ref{stability dynamics}, satisfying 
\begin{align}
v_{0}(t)&\equiv0\,,\qquad \forall\, t\geq 0\,,\label{stima.v0eq0}
\\
    \| v_\perp (t) \|_{s} &\leq C e^{-\frac{\sigma}{2}t} 
    \| \phi_{\perp} \|_{s} \,, 
    \quad  \forall \, t \geq 0 \,, \ \ 
    \phi_\perp \in \bH_0^s\,,\label{stima.vperp}
\end{align}
for some constant $C>0$.  
 We now come back to the original variables to estimate 
 from above
 the norm of 
 the solution $u(t)$
 of \eqref{change.vTu}.
 Recalling \eqref{tT-1} in Proposition \ref{prop:fixed_point}
 we deduce 
\begin{equation}\label{mandala15}
\begin{aligned}
 \begin{pmatrix}
         u_0(t) \\ u_\perp(t)
     \end{pmatrix} & =\tT^{-1}
     \begin{pmatrix}
    v_0(t) \\ v_\perp(t)
     \end{pmatrix} \stackrel{\eqref{tT-1}}{=}
     \begin{pmatrix}
 ({\rm Id_0} -T_2T_3)^{-1}v_0-T_2({\rm Id_\perp} -T_3T_2)^{-1}v_{\perp}
 \\
 ({\rm Id_\perp} -T_3T_2)^{-1}v_{\perp}-T_3({\rm Id_0} -T_2T_3)^{-1}v_0
     \end{pmatrix} \\
     & = \begin{pmatrix}
-T_2({\rm Id_\perp} -T_3T_2)^{-1}v_{\perp}
 \\
 ({\rm Id_\perp} -T_3T_2)^{-1}v_{\perp}
     \end{pmatrix} \,.
     \end{aligned}
 \end{equation}
 By estimates of  Remark \ref{rmk:order_T2.T3} (see also Proposition \ref{prop:fixed_point}), we get
\begin{equation}\label{stimaaltoU}
\begin{aligned}
|u_0(t)|&\lesssim |v_0(t)|+\e^3\|v_{\perp}(t)\|_s \lesssim \e^3\|v_{\perp}(t)\|_s \,,
\\
\|u_{\perp}(t)\|_s&\lesssim \e^{-1}|v_0(t)|+\|v_{\perp}(t)\|_s  \stackrel{\eqref{stima.v0eq0}}{\lesssim} \|v_{\perp}(t)\|_s\,,
\end{aligned}
\end{equation}
for any $t\geq0$.
Hence, we conclude that
\begin{align}
\| u(t)\|_{s} & \simeq 
|u_0(t)| + \| u_{\perp}(t) \|_{s} 
\stackrel{\eqref{stimaaltoU}}{\lesssim}
\e^{-1}|v_0(t)| + \| v_{\perp}(t) \|_{s} 
\\& 
\stackrel{\eqref{stima.v0eq0}}{=} 0 
+ \| v_{\perp}(t) \|_{s}  
\stackrel{\eqref{stima.vperp}}{\lesssim} e^{-\frac{\sigma}{2}t} \|\phi_\perp \|_{s} \\
     & \stackrel{\eqref{mandala14Bis}}{\lesssim} e^{-\frac{\sigma}{2}t} 
     \|\vf_\perp \|_{s} 
     \lesssim 
     e^{-\frac{\sigma}{2}t} \|\vf \|_{s} \,,
\end{align}
which proves \eqref{stable.est.thm} with $\mu_0 = \sigma/2$.
\end{itemize}

\bigskip

\noindent $\blacktriangleright$ {\sc Case ${\rm b}_1\neq 0$, $\nu \geq \eta$.} 
We recall \eqref{change.vTu}-\eqref{initial.data.split}. One has that the systems \eqref{two.Cauchy} have solutions $v(t) = v_0(t) + v_\bot (t)$ with 
$$
v_0(t) = {\rm exp}(M_{\nu, \eta}^{(0)} t)\phi_0 
$$
and by Lemma \ref{stability dynamics}
\begin{equation}\label{stima v bot caso b1 neq 0 stable}
\| v_\bot(t) \|_{s} \lesssim e^{- \frac{\sigma}{2} t} \| \phi_\bot \|_s, \quad \forall t \geq 0\,. 
\end{equation}
By Proposition \ref{prop:positive.zeromode}-$(iii)$ ${\rm Re}(\mu_{\pm}) = - \e^2 \mathtt c + O(\e^3) \leq - \e^2 \frac{\mathtt c}{2}$ for $\e \ll 1$ small enough (for some constant $\mathtt c > 0$). By Lemma \ref{dinamica sistemino due per due}-$(iii)$, one has that
\begin{equation}\label{stima v0 caso b1 neq stabile}
|v_0(t)| \lesssim e^{- \frac{\mathtt c}{2}\e^2 t} |\phi_0| , \quad \forall t \geq 0\,. 
\end{equation}
 Recalling Proposition \ref{prop:fixed_point} 
 we deduce 
\begin{equation}\label{mandala152}
\begin{aligned}
 \begin{pmatrix}
         u_0(t) \\ u_\perp(t)
     \end{pmatrix} & =\tT^{-1}
     \begin{pmatrix}
    v_0(t) \\ v_\perp(t)
     \end{pmatrix} \stackrel{\eqref{tT-1}}{=}
     \begin{pmatrix}
 ({\rm Id_0} -T_2T_3)^{-1}v_0-T_2({\rm Id_\perp} -T_3T_2)^{-1}v_{\perp}
 \\
 ({\rm Id_\perp} -T_3T_2)^{-1}v_{\perp}-T_3({\rm Id_0} -T_2T_3)^{-1}v_0
     \end{pmatrix}, \\
     \begin{pmatrix}
        \phi_0 \\ \phi_\perp
    \end{pmatrix} & := \tT \begin{pmatrix}
        \vf_0 \\ \vf_\perp
    \end{pmatrix} \quad \Leftrightarrow \quad   \begin{cases}
        \phi_0 = \vf_0 + T_2 \vf_\perp \,, \\
        \phi_\perp = T_3 \vf_0 + \vf_\perp \,,
    \end{cases} 
     \end{aligned}
 \end{equation}
 with estimates 
 \begin{equation}\label{stime caso stabile b1 neq 0}
\begin{aligned}
|u_0(t)| & \lesssim |v_0(t)|+ \e^3 \| v_\bot(t) \|_s ,\qquad \| u_\bot(t) \|_s  \lesssim \| v_\bot(t) \|_s + \e^{- 1} |v_0(t)|, \\
|\phi_0| & \lesssim |\vf_0| +  \e^2 \| \vf_\bot \|_s , \qquad \| \phi_\bot \|_s  \lesssim \e^{- 1} |\vf_0| + \| \vf_\bot \|_s\,. 
\end{aligned}
 \end{equation}
Hence for $0 < \e \ll 1 $ small  enough (use that $e^{- (\sigma /2) t} \leq e^{- \e^2(\mathtt c/2) t}$), collecting \eqref{stima v bot caso b1 neq 0 stable}, \eqref{stima v0 caso b1 neq stabile}, \eqref{stime caso stabile b1 neq 0} one gets 
\begin{equation}\label{stima u0 caso b1 stabile}
\begin{aligned}
|u_0(t)| &  \lesssim |v_0(t)|+ \e^3 \| v_\bot(t) \|_s \lesssim e^{- \frac{\mathtt c}{2}\e^2 t} |\phi_0| + \e^3 e^{- (\sigma /2) t} \| \phi_\bot \|_s \\
& \lesssim e^{- \frac{\mathtt c}{2}\e^2 t} (|\vf_0| +  \e^2 \| \vf_\bot \|_s) + \e^3 e^{- (\sigma /2) t} (\e^{- 1} |\vf_0| + \| \vf_\bot \|_s) \\
& \lesssim e^{- \frac{\mathtt c}{2}\e^2 t} (|\vf_0| + \| \vf_\bot \|_s) \lesssim e^{- \frac{\mathtt c}{2}\e^2 t} \| \vf \|_s
\end{aligned}
\end{equation}
and 
\begin{equation}\label{stima u bot caso b1 stabile}
\begin{aligned}
\|u_\bot(t)\|_s &  \lesssim \| v_\bot(t) \|_s + \e^{- 1} |v_0(t)| \lesssim   e^{- (\sigma /2) t} \| \phi_\bot \|_s + \e^{- 1} e^{- \frac{\mathtt c}{2}\e^2 t} |\phi_0| \\
& \lesssim e^{- (\sigma /2) t} \big( \e^{- 1} |\vf_0| + \| \vf_\bot \|_s \big) + \e^{- 1} e^{- \frac{\mathtt c}{2}\e^2 t} \big( |\vf_0| +  \e^2 \| \vf_\bot \|_s \big) \\
& \lesssim \e^{- 1} e^{- \frac{\mathtt c}{2}\e^2 t} |\vf_0| + e^{- \frac{\mathtt c}{2}\e^2 t} \| \vf_\bot \|_s \lesssim \e^{-1} e^{- \frac{\mathtt c}{2}\e^2 t} \| \vf\|_s  \,.
\end{aligned}
\end{equation}
This concludes the proof of the case ${\rm b}_1 \neq 0$ since \eqref{stima b1 neq 0 caso stabile} follows from the latter two estimates \eqref{stima u0 caso b1 stabile}, \eqref{stima u bot caso b1 stabile} by setting $\mu_3 = \mathtt c /2$. 
\\[1mm]
\noindent $\blacktriangleright$ {\sc Case ${\rm b}_1= 0$.} The proof in this case runs with similar arguments with some modifications. We insert the details for completeness. 
Under the assumptions of item $(ii)$ in Theorem \ref{theo.princ}, recalling also Proposition \ref{prop:positive.zeromode}, we have that the $2\times 2$ matrix $M_{\nu,\eta}^{(0)}$ in \eqref{eq:1st_entrance_resist} has two distinct eigenvalues $\mu_{+},\mu_{-}\in\C \setminus\{0\} $ such that ${\rm Re}(\mu_+) = \mathtt c_1\e^2 + O(\e^3)$, ${\rm Re}(\mu_-) = - \mathtt c_2 \e^2 + O(\e^3)$ for some constants $\mathtt c_1, \mathtt c_2 > 0$. By Lemma \ref{dinamica sistemino due per due}-$(ii)$, the system for $v_0$ in \eqref{two.Cauchy} satisfies that there are two one dimensional subspaces $E_{\pm} \subset \C^2$ such that $\C^2 = E_+ \oplus E_-$ and such that
\begin{equation}\label{v0 phi E pm nel lem}
\begin{aligned}
& |v_0 (t)| \gtrsim e^{\e^2 (\mathtt c_1/2) t} |\phi_0| \quad \forall \,\phi_0 \in E_+ \quad \forall \, t \geq 0, \\
& |v_0(t)| \lesssim e^{- \e^2 (\mathtt c_2/2) t} |\phi_0| \quad \forall \, \phi_0 \in E_- \quad \forall \, t \geq 0.
\end{aligned}
\end{equation}

We now define the sets
    \begin{align}
        \cU_\varepsilon & := \{ \vf=\vf_0 + \vf_\perp \in H^s \times H^s \, : \,  \vf_0 + T_2 \vf_\perp  \in E_+  \,, \ \   T_3\vf_0 +  \vf_\perp =  0    \} \,, \label{U.unstab.0a} 
        \\
        \cS_\varepsilon & := \{ \vf=\vf_0 + \vf_\perp \in H^s\times H^s \, : \,   \vf_0 + T_2 \vf_\perp \in  E_-  \} \,.  \label{S.stab.0a} 
    \end{align}
    \begin{itemize}
\item Let us assume that $\vf =\vf_0 + \vf_\perp \in \cU_{\varepsilon}$. In view of \eqref{initial.data.split} and \eqref{v0 phi E pm nel lem}
the new initial conditions $(\phi_0,\phi_{\perp})$ satisfy the following: first we have
\begin{equation}\label{mandala11a}
\begin{aligned}
\phi_{\perp}\stackrel{\eqref{initial.data.split}}{=}T_3 \varphi_0+\vf_\perp \stackrel{\eqref{U.unstab.neq0}}{=}0, \quad \phi_0 \in E_+\,,
\end{aligned}
\end{equation}
which also implies 
\begin{equation}\label{mandala12a}
\vf_\perp=-T_3 \varphi_0\,.
\end{equation}
Secondly, we have
\begin{equation}\label{mandala13a}
\phi_0\stackrel{\eqref{initial.data.split}}{=}
\vf_0+T_2\vf_{\perp}\stackrel{\eqref{mandala12a}}{=}
({\rm Id}_0 - T_2T_3) \vf_0 \in E_+\,.
\end{equation}
Notice that, in view of Proposition \ref{prop:fixed_point}, Remark 
\ref{rmk:order_T2.T3} and \eqref{mandala13a} we also have
\begin{equation}\label{mandala14a}
|\phi_0|\simeq |\vf_0|\,.
\end{equation}
and by \eqref{v0 phi E pm nel lem}
$$
|v_0(t)| \gtrsim e^{\e^2(\mathtt c_1/2) t} |\phi_0| \quad \forall t \geq 0\,. 
$$

 We now come back to the original variables to estimate 
 from below
 the norm of 
 the solution $u(t)$
 of \eqref{change.vTu}.
By  \eqref{eq:ansatz_conjugation} in Proposition \ref{prop:fixed_point}
we recall that
\[
 \begin{pmatrix}
        v_0(t) \\ v_\perp(t)
    \end{pmatrix} := \tT \begin{pmatrix}
        u_0(t) \\ u_\perp(t)
    \end{pmatrix}
    \stackrel{\eqref{eq:ansatz_conjugation}}{=}
    \begin{pmatrix}
    u_0(t)+T_2 u_{\perp}(t)
    \\
    T_3u_0(t)+u_{\perp}(t)
    \end{pmatrix}\,,
\]
which, by estimates \eqref{basicestT}, implies 
\begin{align*}
|v_0(t)|&\lesssim |u_0(t)|+\e^3\|u_{\perp}(t)\|_s\,,
\\
\|v_{\perp}(t)\|_{s}&\lesssim \e^{-1}|u_0(t)|+\|u_\perp(t)\|{_s}\,.
\end{align*}
 In conclusion we get
\begin{align}
    \| u(t)\|_{s} 
    & \simeq |u_0(t)| + \| u_{\perp}(t) \|_{s} 
    \gtrsim |u_0(t)| +\e^3\|u_\perp(t)\|_s
    \gtrsim |v_0(t)|
    \\
     & \stackrel{\eqref{stima.v0}}{\gtrsim} 
   e^{\e^2 (\mathtt c_1 /2))t} |\phi_0| 
   \stackrel{\eqref{mandala14}}{\gtrsim}
e^{\e^2 (\mathtt c_1 /2))t} |\vf_0|\,.
\end{align}
This  proves \eqref{unstable.est.thmcaso2} with 
$\mu_1 = \mathtt c_1 /2 $ (see \eqref{explago1}).


\item 
We now  assume that $\vf =\vf_0 + \vf_\perp \in \cS_{\varepsilon}$, recall \eqref{S.stab.0a}. In view of \eqref{S.stab.neq0} and \eqref{initial.data.split},
the new initial conditions $(\phi_0,\phi_{\perp})$ satisfy the following: first we have
\begin{equation}\label{mandala11TRIS}
\begin{aligned}
\phi_{0}\stackrel{\eqref{initial.data.split}}{=}\varphi_0+T_2\vf_\perp \in E_- . 
\end{aligned}
\end{equation}
and hence in view of \eqref{v0 phi E pm nel lem} and Lemma \ref{stability dynamics} one has that
\begin{equation}\label{stima v ultimo cao}
|v_0(t)| \lesssim e^{- (\mathtt c_2 /2)\e^2 t} |\phi_0|\,, \quad \| v_\bot (t) \|_s \lesssim e^{- (\sigma/2) t} \| \phi_\bot \|_s, \quad \forall t \geq 0\,. 
\end{equation}

 Recalling Proposition \ref{prop:fixed_point} 
 we deduce 
\begin{equation}\label{mandala152A}
\begin{aligned}
 \begin{pmatrix}
         u_0(t) \\ u_\perp(t)
     \end{pmatrix} & =\tT^{-1}
     \begin{pmatrix}
    v_0(t) \\ v_\perp(t)
     \end{pmatrix} \stackrel{\eqref{tT-1}}{=}
     \begin{pmatrix}
 ({\rm Id_0} -T_2T_3)^{-1}v_0-T_2({\rm Id_\perp} -T_3T_2)^{-1}v_{\perp}
 \\
 ({\rm Id_\perp} -T_3T_2)^{-1}v_{\perp}-T_3({\rm Id_0} -T_2T_3)^{-1}v_0
     \end{pmatrix}, \\
     \begin{pmatrix}
        \phi_0 \\ \phi_\perp
    \end{pmatrix} & := \tT \begin{pmatrix}
        \vf_0 \\ \vf_\perp
    \end{pmatrix} \quad \Leftrightarrow \quad   \begin{cases}
        \phi_0 = \vf_0 + T_2 \vf_\perp \,, \\
        \phi_\perp = T_3 \vf_0 + \vf_\perp \,,
    \end{cases} 
     \end{aligned}
 \end{equation}
 with estimates 
 \begin{equation}\label{stime caso stabile b1 neq 0A}
\begin{aligned}
|u_0(t)| & \lesssim |v_0(t)|+ \e^3 \| v_\bot(t) \|_s ,\qquad \| u_\bot(t) \|_s  \lesssim \| v_\bot(t) \|_s + \e^{- 1} |v_0(t)|, \\
|\phi_0| & \lesssim |\vf_0| +  \e^3 \| \vf_\bot \|_s , \qquad \| \phi_\bot \|_s  \lesssim \e^{- 1} |\vf_0| + \| \vf_\bot \|_s\,. 
\end{aligned}
 \end{equation}
 Hence, collecting \eqref{stima v ultimo cao}, \eqref{mandala152A}, \eqref{stime caso stabile b1 neq 0A} (using that $e^{- (\mathtt c_2/2)\e^2 t} \leq e^{- (\sigma /2) t}$ for $0 < \e \ll 1$ small enough) one gets 
\begin{equation}\label{stima u0 caso b1 stabileA}
\begin{aligned}
|u_0(t)| &  \lesssim |v_0(t)|+ \e^3 \| v_\bot(t) \|_s \lesssim e^{- (\mathtt c_2/2)\e^2 t} |\phi_0| + \e^3 e^{- (\sigma /2) t} \| \phi_\bot \|_s \\
& \lesssim e^{- (\mathtt c_2/2)\e^2 t} (|\vf_0| +  \e^2 \| \vf_\bot \|_s) + \e^3 e^{- (\sigma /2) t} (\e^{- 1} |\vf_0| + \| \vf_\bot \|_s) \\
& \lesssim e^{- (\mathtt c_2/2)\e^2 t} (|\vf_0| + \| \vf_\bot \|_s) \lesssim e^{- (\mathtt c_2/2)\e^2 t} \| \vf \|_s
\end{aligned}
\end{equation}
and 
\begin{equation}\label{stima u bot caso b1 stabileA}
\begin{aligned}
\|u_\bot(t)\|_s &  \lesssim \| v_\bot(t) \|_s + \e^{- 1} |v_0(t)| \lesssim   e^{- (\sigma /2) t} \| \phi_\bot \|_s + \e^{- 1} e^{- (\mathtt c_2/2)\e^2 t} |\phi_0| \\
& \lesssim e^{- (\sigma /2) t} \big( \e^{- 1} |\vf_0| + \| \vf_\bot \|_s \big) + \e^{- 1} e^{- (\mathtt c_2/2)\e^2 t} \big( |\vf_0| +  \e^2 \| \vf_\bot \|_s \big) \\
& \lesssim \e^{- 1} e^{- (\mathtt c_2/2)\e^2 t} |\vf_0| + e^{- (\mathtt c_2/2)\e^2 t} \| \vf_\bot \|_s \lesssim \e^{-1} e^{- (\mathtt c_2/2)\e^2 t} \| \vf \|_s \,.
\end{aligned}
\end{equation}
This concludes the proof of the case ${\rm b}_1 = 0$ since \eqref{stable.est.thmcaso2}  follows from the latter two estimates \eqref{stima u0 caso b1 stabileA}, \eqref{stima u bot caso b1 stabileA}.
\end{itemize}
The proof of Theorem \ref{thm:dinamico} is then concluded.
\end{proof}

\section*{ Acknowledgements.} 
R. Montalto  
is supported by the ERC STARTING GRANT 2021 ``Hamiltonian Dynamics,  Normal Forms and Water Waves'' (HamDyWWa), Project Number: 101039762. 
Views and opinions expressed are however those of the authors only and do not necessarily reflect those of the European Union or the European Research Council. Neither the European Union nor the granting authority can be held responsible for them. 
R. Feola is supported  by ``GNAMPA - INdAM'', CUP E53C25002010001, and  ``GNAMPA - INdAM'', CUP E5324001950001. 
L. Franzoi is supported by ``GNAMPA - INdAM'', CUP E53C25002010001, and has also been supported by the ERC STARTING GRANT 2021 ``Hamiltonian Dynamics,  Normal Forms and Water Waves'' (HamDyWWa), Project Number: 101039762, until January 2026. 
C. Pe\~na is supported by the project PID2021-123034NBI00 funded by MICIU/AEI/10.13039/501100011033 and FEDER/EU, and has also partially been supported by the  ERC STARTING GRANT 2021 ``Hamiltonian Dynamics, 
Normal Forms and Water Waves'' (HamDyWWa), Project Number: 101039762.

\section*{Statements and declarations}

\medskip

\noindent
The authors state that there is no conflict of interest and certify that they  have no affiliations or involvement with any organization or entity with any
financial interest, or non-financial interest in the subject matter or materials discussed in this manuscript.

Moreover, data sharing is not applicable to this article as no datasets were generated or analyzed during the
current study.

\begin{footnotesize}
	
\end{footnotesize}

\end{document}